
\documentclass[11pt]{article}
%%%%%%%%%%%%%%%%%%%%%%%%%%%%%%%%%%%%%%%%%%%%%%%%%%%%%%%%%%%%%%%%%%%%%%%%%%%%%%%%%%%%%%%%%%%%%%%%%%%%%%%%%%%%%%%%%%%%%%%%%%%%%%%%%%%%%%%%%%%%%%%%%%%%%%%%%%%%%%%%%%%%%%%%%%%%%%%%%%%%%%%%%%%%%%%%%%%%%%%%%%%%%%%%%%%%%%%%%%%%%%%%%%%%%%%%%%%%%%%%%%%%%%%%%%%%
\usepackage{graphicx}
\usepackage{amssymb}
\usepackage{amsfonts}
\usepackage{amsmath}
\usepackage{hyperref}

\setcounter{MaxMatrixCols}{10}
%TCIDATA{OutputFilter=LATEX.DLL}
%TCIDATA{Version=5.00.0.2606}
%TCIDATA{<META NAME="SaveForMode" CONTENT="1">}
%TCIDATA{BibliographyScheme=Manual}
%TCIDATA{LastRevised=Monday, June 23, 2014 20:08:48}
%TCIDATA{<META NAME="GraphicsSave" CONTENT="32">}
%TCIDATA{Language=American English}
%TCIDATA{CSTFile=LaTeX article (bright).cst}

\newtheorem{theorem}{Theorem}[section]
\newtheorem{proposition}[theorem]{Proposition}
\newtheorem{lemma}[theorem]{Lemma}
\newtheorem{corollary}[theorem]{Corollary}

\newtheorem{definition}[theorem]{Definition}
\newtheorem{example}[theorem]{Example}
\newtheorem{remark}[theorem]{Remark}

\newenvironment{proof}[1][Proof]{\textbf{#1.} }{\ \rule{0.5em}{0.5em}}

% A4
%\setlength{\textheight}{24 cm}
%\setlength{\textwidth}{16 cm}
%\setlength{\topmargin}{-1cm}
%\setlength{\oddsidemargin}{0.5 cm}
%\setlength{\evensidemargin}{0.5 cm}

%A4
%\setlength{\textheight}{23 cm}
%\setlength{\textwidth}{15 cm}
%\setlength{\topmargin}{-1cm}
%\setlength{\oddsidemargin}{1.0 cm}
%\setlength{\evensidemargin}{1.0 cm}

%Letter
\setlength{\textheight}{23 cm}
\setlength{\textwidth}{16 cm}
\setlength{\topmargin}{-1cm}
\setlength{\oddsidemargin}{-0.0 cm}
\setlength{\evensidemargin}{-0.0 cm}

\def\RM{\rm}

\def\limfunc#1{\mathop{\mathrm{#1}}}
\def\func#1{\mathop{\mathrm{#1}}\nolimits}

\def\dsum{\displaystyle\sum}

\def\enddoc{

\begin{document}

\author{Alexander Grigor'yan\thanks{%
Partially supported by SFB 701 of German Research Council} \\
%EndAName
Department of Mathematics\\
University of Bielefeld\\
33501 Bielefeld, Germany \and Yong Lin\thanks{%
Partially supported by the Fundamental Research Funds for the Central Universities and
the Research Funds of (11XNI004), and National Natural Science Foundation of China, Grant
No. 11271011}\\
%EndAName
Department of Mathematics \\
Renmin University of China \\
Beijing, China \and Yuri Muranov\thanks{%
Partially supported by the CONACyT Grants 98697 and 151338, SFB 701 of
German Research Council, and the travel grant of the Commission for
Developing Countries of the International Mathematical Union} \\
%EndAName
Department of Mathematics \\
University of Warmia and Mazury\\
Olsztyn, Poland \and Shing-Tung Yau \thanks{%
Partially supported by the grant "Geometry and Topology of Complex
Networks", no. FA-9550-13-1-0097} \\
%EndAName
Department of Mathematics\\
Harvard University\\
Cambridge MA 02138, USA}
\title{Homotopy theory for digraphs}
\date{June 2014}
\maketitle

\begin{abstract}
We introduce a homotopy theory of digraphs (directed graphs) and prove its
basic properties, including the relations to the homology theory of digraphs
constructed by the authors in previous papers. In particular, we prove the
homotopy invariance of homologies of digraphs and the relation between the
fundamental group of the digraph and its first homology group.

The category of (undirected) graphs can be identified by a natural way with
a full subcategory of digraphs. Thus we obtain also consistent homology and
homotopy theories for graphs. Note that the homotopy theory for graphs
coincides with the one constructed in \cite{Babson} and \cite{Barcelo}.
\end{abstract}

\tableofcontents

\section{Introduction}

\label{S1}

The homology theory of digraphs has been constructed in a series of the
previous papers of the authors (see, for example, \cite{Mi2}, \cite{Mi3},
\cite{Mi2012}). In the present paper we introduce a homotopy theory of
digraphs and prove that there are natural relations to aforementioned
homology theory. In particular, we prove the invariance of the homology
theory under homotopy and the relation between the fundamental group and the
first homology group, which is similar to the one in the classical algebraic
topology. Let emphasize, that the theories of homology and homotopy of
digraphs are introduced entirely independent each other, but nevertheless
they exhibit a very tight connection similarly to the classical algebraic
topology.

The homotopy theory of undirected graphs was constructed by Babson, Barcelo,
Kramer, Laubenbacher, Longueville and Weaver in \cite{Babson} and \cite%
{Barcelo}. We identify in a natural way the category of graphs with a full
subcategory of digraphs, which allows us to transfer the homology and
homotopy theories to undirected graphs. The homotopy theory of graphs,
obtained in this way, coincides with the homotopy theory constructed in \cite%
{Babson} and \cite{Barcelo}. However, our notion of homology of graphs is
new, and the result about homotopy invariance of homologies of graphs is
also new. Hence, our results give an answer to the question raised in \cite%
{Babson} asking \textquotedblleft for a homology theory associated to the
A-theory of a graph\textquotedblright .

There are other homology theories on graphs that try to mimic the classical
singular homology theory. In those theories one uses predefined
\textquotedblleft small\textquotedblright\ graphs as basic cells and defines
singular chains as formal sums of the maps of the basic cell into the graph
(see, for example, \cite{Ivashchenko}, \cite{Talbi}\textbf{).} However,
simple examples show that the homology groups obtained in this way, depend
essentially on the choice of the basic cells.

Our homology theory of digraphs (and graphs) is very different from the
"singular" homology theories. We do not use predefined cells but formulate
only the desired properties of the cells in terms of the digraph (graph)
structure. Namely, each cell is determined by a sequence of vertices that
goes along the edges (allowed paths), and the boundary of the cell must also
be of this type. This homology theory has very clear algebraic \cite{Mi2}
and geometric \cite{Mi3}, \cite{Mi2012} motivation. It provides effective
computational methods for digraph (graph) homology, agrees with the homotopy
theory, and provides good connections with homology theories of simplicial
and cubical complexes \cite{Mi3} and, in particular, with homology of
triangulated manifolds.

Let us briefly describe the structure of the paper and the main results. In
Section \ref{S2} we give a short survey on homology theory for digraphs
following \cite{Mi2012}, \cite{Mi3}.

In Section \ref{S3}, we introduce the notion of homotopy of digraphs. We
prove the homotopy invariance of homology groups (Theorem \ref{t3.4}) and
give a number of examples based on the notion of deformation retraction.

In Section \ref{S4}, we define a fundamental group $\pi _{1}$ of digraph.
Elements of $\pi _{1}$ are equivalence classes of loops on digraphs, where
the equivalence of the loops is defined using a new notion of $C$-homotopy,
which is more general than a homotopy. A description of $C$-homotopy in
terms of local transformations of loops is given in Theorem \ref{p4.12}.

We prove the homotopy invariance of $\pi _{1}$ (Theorem \ref{TGHpi}) and the
relation $H_{1}=\pi _{1}/\left[ \pi _{1},\pi _{1}\right] $ between the first
homology group over $\mathbb{Z}$ and the fundamental group (Theorem \ref%
{t4.13}). We define higher homotopy groups by induction using the notion of
a loop digraph.

In Section \ref{SecSperner} we give a new proof of the classical Sperner
lemma, using fundamental groups of digraphs. We hope that our notions of
homotopy and homology theories on digraph can find further applications in
graph theory, in particular, in graph coloring.

In Section \ref{S5} we construct isomorphism between the category of
(undirected) graphs and a full subcategory of digraphs, thus transferring
the aforementioned results from the category of digraphs to the category of
graphs.

\section{Homology theory of digraphs}

\setcounter{equation}{0}\label{S2}In this Section we state the basic notions
of homology theory for digraphs in the form that we need in subsequent
sections. This is a slight adaptation of a more general theory from \cite%
{Mi2012}, \cite{Mi3}.

\subsection{The notion of a digraph}

\label{S21}We start with some definitions.

\begin{definition}
\label{d2.1}\RM A\emph{\ directed graph (digraph) } $G=(V,E)$ is a couple of
a set $V$, whose elements are called the \emph{vertices}, and a subset $%
E\subset \{V\times V\setminus {\func{diag}}\}$ of ordered pairs of vertices
that are called (directed) \emph{edges} or \emph{arrows}. The fact that $%
(v,w)\in E$ is also denoted by $v\rightarrow w$.
\end{definition}

In particular, a digraph has no edges $v\rightarrow v$ and, hence, it is a
combinatorial digraph in the sense of \cite{Ribenboim}. We write
\begin{equation*}
v\,\overrightarrow{=}w
\end{equation*}%
if either $v=w$ or $v\rightarrow w$. In this paper we consider only finite
digraphs, that is, digraphs with a finite set of vertices.

\begin{definition}
\label{d2.2} \RM A\emph{\ morphism }from a digraph\emph{\ }$G=\left(
V_{G},E_{G}\right) $\emph{\ }to a digraph\emph{\ }$H=\left(
V_{H},E_{H}\right) $ is a map $f\colon V_{G}\rightarrow V_{H}$ such that for
any edge $v\rightarrow w$ on $G$ we have $f\left( v\right) $\thinspace $%
\overrightarrow{=}f\left( w\right) $ on $H$ (that is, either $%
f(v)\rightarrow f(w)$ or $f(v)=f(w)$). We will refer to such morphisms also
as \emph{digraphs maps }(sometimes simply \emph{maps}) and denote them
shortly by $f:G\rightarrow H.$
\end{definition}

The set of all digraphs with digraphs maps form a \emph{category of digraphs}
that will be denoted by $\mathcal{D}$.

\begin{definition}
\label{d2.3}\RM For two digraphs $G=(V_{G},E_{G})$ and $H=(V_{H},E_{H})$
define the \emph{Cartesian product} $G\boxdot H$ as a digraph with the set
of vertices $V_{G}\times V_{H}$ and with the set of edges as follows: for $%
x,x^{\prime }\in V_{G}\ $and$\ \ y,y^{\prime }\in V_{H}$, we have $%
(x,y)\rightarrow (x^{\prime },y^{\prime })\ $in $G\boxdot H$ if and only if
\begin{equation*}
\text{either}\ \ x^{\prime }=x\text{ and}\ y\rightarrow y^{\prime }\text{,}\
\ \text{or}\ \ x\rightarrow x^{\prime }\ \text{and}\ \ y=y^{\prime },
\end{equation*}%
as is shown on the following diagram:%
\begin{equation*}
\begin{array}{cccccc}
y^{\prime }\bullet & \dots & \overset{\left( x,y^{\prime }\right) }{\bullet }
& \longrightarrow & \overset{\left( x^{\prime },y^{\prime }\right) }{\bullet
} & \dots \\
\ \ \ \ \uparrow \  &  & \uparrow &  & \uparrow &  \\
y\bullet & \dots & \overset{\left( x,y\right) }{\bullet } & \longrightarrow
& \overset{\left( x^{\prime },y\right) }{\bullet } & \dots \\
\ \ \ \ \ \ \ \ \ \  &  &  &  &  &  \\
^{H}\ \diagup \ _{G} & \dots & \underset{x}{\bullet } & \longrightarrow &
\underset{x^{\prime }}{\bullet }\  & \dots%
\end{array}%
\end{equation*}
\end{definition}

\subsection{Paths and their boundaries}

Let $V$ be a finite set. For any $p\geq 0$, an \emph{elementary} $p$-\emph{%
path} is any (ordered) sequence $i_{0},...,i_{p}$ of $p+1$ vertices of $V$
that will be denoted simply by $i_{0}...i_{p}$ or by $e_{i_{0}...i_{p}}$.
Fix a commutative ring $\mathbb{K}$ with unity and denote by $\Lambda
_{p}=\Lambda _{p}\left( V\right) =\Lambda _{p}\left( V,\mathbb{K}\right) $
the free $\mathbb{K}$-module that consist of all formal $\mathbb{K}$-linear
combinations of all elementary $p$-paths. Hence, each $p$-path has a form%
\begin{equation*}
v=\sum_{i_{0}i_{1}...i_{p}}v^{_{i_{0}i_{1}...i_{p}}}e_{i_{0}i_{1}...i_{p}},\
\ \text{where}\ \ v^{_{i_{0}i_{1}...i_{p}}}\in \mathbb{K}.
\end{equation*}

\begin{definition}
\label{d2.4} \RM Define for any $p\geq 0$ the \emph{boundary operator} $%
\partial :\Lambda _{p+1}\rightarrow \Lambda _{p}$ by%
\begin{equation}
\left( \partial v\right)
^{i_{0}...i_{p}}=\dsum\limits_{k}\dsum\limits_{q=0}^{p+1}\left( -1\right)
^{q}v^{i_{0}...i_{q-1}ki_{q}...i_{p}}  \label{dv}
\end{equation}%
where $1$ is the unity of $\mathbb{K}$ and the index $k$ is inserted so that
it is preceded by $q$ indices.
\end{definition}

Sometimes we need also the operator $\partial :\Lambda _{0}\rightarrow
\Lambda _{-1}$ where we set $\Lambda _{-1}=\left\{ 0\right\} \ $and $%
\partial v=0$ for all $v\in \Lambda _{0}$. It follows from (\ref{dv}) that%
\begin{equation}
\partial e_{j_{0}...j_{p+1}}=\dsum\limits_{q=0}^{p+1}\left( -1\right)
^{q}e_{j_{0}...\widehat{j_{q}}...j_{p+1}}.  \label{3.4}
\end{equation}%
%
%
%
%
%
%
%
%
%
%
%
%
%
%
%
%
%
%
%
%
%
%
%
%
%
%
%
%
%
%
%
%
%
%
%
%
%
%
%
%
%
%
%
%
%
%
%
%
%
%
%
%
%
%
%
%
%
%
%
%
%For example, we have
%\begin{eqnarray*}
%\partial e_{i} &=&0 \\
%\partial e_{ij} &=&e_{j}-e_{i} \\
%\partial e_{ijk} &=&e_{jk}-e_{ik}+e_{ij}.\text{\ \ \ }
%\end{eqnarray*}

It is easy to show that $\partial ^{2}v=0$ for any $v\in \Lambda _{p}$ (\cite%
{Mi2012}). Hence, the family of $\mathbb{K}$-modules $\left\{ \Lambda
_{p}\right\} _{p\geq -1}$ with the boundary operator $\partial $ determine a
chain complex that will be denoted by $\Lambda _{\ast }\left( V\right)
=\Lambda _{\ast }\left( V,\mathbb{K}\right) $.

\subsection{Regular paths}

\label{SecReg}

\begin{definition}
\label{d2.5} \RM An elementary $p$-path $e_{i_{0}...i_{p}}$ on a set $V$ is
called \emph{regular} if $i_{k}\neq i_{k+1}$ for all $k=0,...,p-1$, and
irregular otherwise.
\end{definition}

Let $I_{p}$ be the submodule of $\Lambda _{p}$ that is $\mathbb{K}$-spanned
by irregular $e_{i_{0}...i_{p}}.$ It is easy to verify that $\partial
I_{p}\subset I_{p-1}$ (cf. \cite{Mi2012}). Consider the quotient $\mathcal{R}%
_{p}:=\Lambda _{p}/I_{p}. $ Since $\partial I_{p}\subset I_{p-1}$, the
induced boundary operator
\begin{equation*}
\partial \colon \mathcal{R}_{p}\rightarrow \mathcal{R}_{p-1} \ \ (p\geq 0)
\end{equation*}
is well-defined. We denote by $\mathcal{R}_{\ast }\left( V\right) $ the
obtained chain complex. Clearly, $\mathcal{R}_{p}$ is linearly isomorphic to
the space of regular $p$-paths:
\begin{equation}
\mathcal{R}_{p}\cong \func{span}{}_{\mathbb{K}}\left\{
e_{i_{0}...i_{p}}:i_{0}...i_{p}\text{ is regular}\right\}  \label{Rp=}
\end{equation}%
For simplicity of notation, we will identify $\mathcal{R}_{p}$ with this
space, by setting all irregular $p$-paths to be equal to $0.$

Given a map $f\colon V\rightarrow V^{\prime } $ between two finite sets $V$
and $V^{\prime }$, define for any $p\geq 0$ the \emph{induced map}
\begin{equation*}
f_{\ast }\colon \Lambda _{p}(V)\rightarrow \Lambda _{p}(V^{\prime })
\end{equation*}%
by the rule $f_{\ast }\left( e_{i_{0}...i_{p}}\right)
=e_{f(i_{0})...f(i_{p})}, $ extended by $\mathbb{K}$-linearity to all
elements of $\Lambda _{p}\left( V\right) $. The map $f_{\ast }$ is a
morphism of chain complexes, because it trivially follows from (\ref{3.4})
that $\partial f_{\ast }=f_{\ast }\partial $. Clearly, if $e_{i_{0}...i_{p}}$
is irregular then $f_{\ast }\left( e_{i_{0}...i_{p}}\right) $ is also
irregular, so that
\begin{equation*}
f_{\ast }\left( I_{p}\left( V\right) \right) \subset I_{p}\left( V^{\prime
}\right) .
\end{equation*}%
Therefore, $f_{\ast }$ is well-defined on the quotient $\Lambda _{p}/I_{p}$
so that we obtain the induced map%
\begin{equation}
f_{\ast }:\mathcal{R}_{p}\left( V\right) \rightarrow \mathcal{R}_{p}\left(
V^{\prime }\right) .  \label{RpV}
\end{equation}%
Since $f_{\ast }$ still commutes with $\partial $, we see that the induced
map (\ref{RpV}) induces a morphism $\mathcal{R}_{\ast }(V)\rightarrow
\mathcal{R}_{\ast }(V^{\prime })$ of chain complexes. With identification (%
\ref{Rp=}) of $\mathcal{R}_{p}$ we have the following rule for the map (\ref%
{RpV}):%
\begin{equation}
f_{\ast }\left( e_{i_{0}...i_{p}}\right) =%
\begin{cases}
e_{f(i_{0})...f(i_{p})}, & \text{if }e_{f(i_{0})...f(i_{p})}\text{ is
regular,} \\
0, & \text{if }e_{f(i_{0})...f(i_{p})}\ \text{is irregular.}%
\end{cases}
\label{fe}
\end{equation}

\subsection{Allowed and $\partial $-invariant paths on digraphs}

\label{SecAllowed}

\begin{definition}
\label{d2.6} \RM Let $G=(V,E)$ be a digraph. An elementary $p$-path $%
i_{0}...i_{p}$ on $V$ is called \emph{allowed} if $i_{k}\rightarrow i_{k+1}$
for any $k=0,...,p-1$, and \emph{non-allowed }otherwise. The set of all
allowed elementary $p$-paths will be denoted by $E_{p}$.
\end{definition}

For example, $E_{0}=V$ and $E_{1}=E$. Clearly, all allowed paths are
regular. Denote by $\mathcal{A}_{p}=\mathcal{A}_{p}\left( G\right) $ the
submodule of $\mathcal{R}_{p}\left( G\right) :=\mathcal{R}_{p}\left(
V\right) $ spanned by the allowed elementary $p$-paths, that is,
\begin{equation}
\mathcal{A}_{p}=\func{span}{}_{\mathbb{K}}\left\{
e_{i_{0}...i_{p}}:i_{0}...i_{p}\in E_{p}\right\} .  \label{Padef}
\end{equation}%
The elements of $\mathcal{A}_{p}$ are called \emph{allowed} $p$-paths.

Note that the modules $\mathcal{A}_{p}$ of allowed paths are in general
\emph{not} invariant for $\partial $. Consider the following submodules of $%
\mathcal{A}_{p}$
\begin{equation}
\Omega _{p}\equiv \Omega _{p}\left( G\right) :=\left\{ v\in \mathcal{A}%
_{p}:\partial v\in \mathcal{A}_{p-1}\right\}  \label{PE=}
\end{equation}%
that are $\partial $-invariant. Indeed, $v\in \Omega _{p}$ implies $\partial
v\in \mathcal{A}_{p-1}$ and $\partial \left( \partial v\right) =0\in
\mathcal{A}_{p-2}$, whence $\partial v\in \Omega _{p-1}$. The elements of $%
\Omega _{p}$ are called $\partial $-\emph{invariant} $p$-paths.

Hence, we obtain a chain complex $\Omega _{\ast }=\Omega _{\ast }\left(
G\right) =\Omega _{\ast }\left( G,\mathbb{K}\right) $:
\begin{equation*}
\begin{array}{cccccccccccc}
0 & \leftarrow & \Omega _{0} & \overset{\partial }{\leftarrow } & \Omega _{1}
& \overset{\partial }{\leftarrow } & \dots & \overset{\partial }{\leftarrow }
& \Omega _{p-1} & \overset{\partial }{\leftarrow } & \Omega _{p} & \overset{%
\partial }{\leftarrow }\dots%
\end{array}%
\end{equation*}%
By construction we have $\Omega _{0}=\mathcal{A}_{0}$ and $\Omega _{1}=%
\mathcal{A}_{1}$, while in general $\Omega _{p}\subset \mathcal{A}_{p}$.

Let us define for any $p\geq 0$ the homologies of the digraph $G$ with
coefficients from $\mathbb{K}$ by
\begin{equation*}
H_{p}(G,\mathbb{K})=H_{p}\left( G\right) :=H_{p}\left( \Omega _{\ast }\left(
G\right) \right) =\left. \ker \partial |_{\Omega _{p}}\right/ \func{Im}%
\partial |_{\Omega _{p+1}}.
\end{equation*}%
Let us note that homology groups $H_{p}\left( G\right) $ (as well as the
modules $\Omega _{p}\left( G\right) $) can be computed directly by
definition using simple tools of linear algebra, in particular, those
implemented in modern computational software. On the other hand, some
theoretical tools for computation of homology groups like K\"{u}nneth
formulas were developed in \cite{Mi2012}.

\begin{example}
\label{e2.7} \RM Consider a digraph $G$ as on Fig. \ref{pic15a}.\FRAME{%
ftbphFU}{6.365in}{1.8395in}{0pt}{\Qcb{Planar digraph with a nontrivial
homology group $H_{2}$}}{\Qlb{pic15a}}{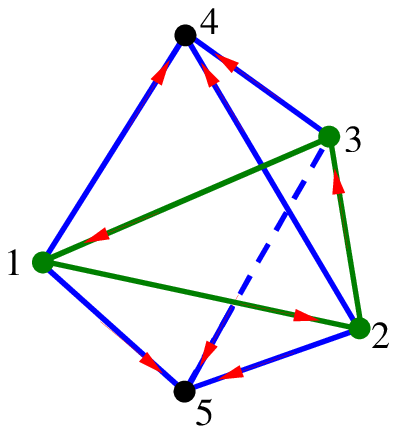}{\special{language
"Scientific Word";type "GRAPHIC";maintain-aspect-ratio TRUE;display
"USEDEF";valid_file "F";width 6.365in;height 1.8395in;depth
0pt;original-width 6.3027in;original-height 1.8023in;cropleft "0";croptop
"1";cropright "1";cropbottom "0";filename 'pic15a.eps';file-properties
"XNPEU";}}

A direct computation shows that $H_{1}\left( G,\mathbb{K}\right) =\left\{
0\right\} $ and $H_{2}\left( G,\mathbb{K}\right) \cong \mathbb{K}$, where $%
H_{2}\left( G\right) $ is generated by
\begin{equation*}
e_{124}+e_{234}+e_{314}-\left( e_{125}+e_{235}+e_{315}\right) .
\end{equation*}%
It is easy to see that $G$ is a planar graph but nevertheless its second
homology group is non-zero. This shows that the digraph homologies
\textquotedblleft see\textquotedblright\ some non-trivial intrinsic
dimensions of digraphs that are not necessarily related to embedding
properties.
\end{example}

\begin{example}
\label{e2.8}\RM Fix $n\geq 3$. Denote by $S_{n}$ a digraph with the vertex
set $V_{S_{n}}=\left\{ 0,...,n-1\right\} $ such that for any $i\in V_{S_{n}}$
either $i\rightarrow i+1$ or $i+1\rightarrow i$ (where $n$ and $0$ are
identified). We refer to $S_{n}$ as a \emph{cycle digraph}. Obviously, any
cycle digraph can be obtained from one of the line digraphs $I_{n}$ by
identifying the vertices $n$ and $0$.

The following $1$-path on $S_{n}$
\begin{equation}
\varpi =\sum_{\left\{ i\in S_{n}:i\rightarrow i+1\right\}
}e_{i(i+1)}-\sum_{\left\{ i\in S_{n}:i+1\rightarrow i\right\} }e_{(i+1)i}
\label{om}
\end{equation}%
lies in $\Omega _{1}(S_{n})$ and is closed. We will refer to $\varpi $ as a
\emph{standard} $1$-path on $S_{n}$. It is possible to show that $\varpi $
generates the space of all closed $1$-paths in $\Omega _{1}\left(
S_{n}\right) $, which is therefore one-dimensional. The homology group $%
H_{1}\left( S_{n},\mathbb{K}\right) $ is, hence, generated by the homology
class $\left[ \varpi \right] $, provided this class is non-trivial. One can
show that $\left[ \varpi \right] =0$ if and only if $S_{n}$ is isomorphic to
one of the following two digraphs:%
\begin{equation}
\text{a triangle }%
\begin{array}{c}
_{_{\nearrow }}\overset{1}{\bullet }_{_{\searrow }} \\
^{0}\bullet \ \rightarrow \ \bullet ^{2}%
\end{array}%
\ \ \text{or a square\ }%
\begin{array}{ccc}
^{1}\bullet & \longrightarrow & \bullet ^{2} \\
\ \uparrow &  & \uparrow \  \\
^{0}\bullet & \longrightarrow & \bullet ^{3}%
\end{array}%
,  \label{ts}
\end{equation}%
so that in this case $H_{1}\left( S_{n},\mathbb{K}\right) =\left\{ 0\right\}
$. In the case of triangle, $\varpi $ is the boundary of the $2$-path $%
e_{012}\in \Omega _{2}$, and, in the case of square, $\varpi $ is the
boundary of $e_{012}-e_{032}\in \Omega _{2}$.

If $S_{n}$ contains neither triangle nor square, then $\left[ \varpi \right]
$ is a generator of $H_{1}\left( S_{n},\mathbb{K}\right) \cong \mathbb{K}$.
\end{example}

\begin{proposition}
\label{pijk}Let $G$ be any finite digraph. Then any $\omega \in \Omega
_{2}\left( G,\mathbb{Z}\right) $ can be represented as a linear combination
of the $\partial $-invariant $2$-paths of following three types:

\begin{enumerate}
\item $e_{iji}$ with $i\rightarrow j\rightarrow i$ (a double edge in $G$);

\item $e_{ijk}$ with $i\rightarrow j\rightarrow k$ and $i\rightarrow k$ (a
triangle as a subgraph of $G$);

\item $e_{ijk}-e_{imk}$ with $i\rightarrow j\rightarrow k,\ i\rightarrow
m\rightarrow k,\ i\not\rightarrow k,$ $i\neq k$ (a square as a subgraph of $%
G $).
\end{enumerate}
\end{proposition}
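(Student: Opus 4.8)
The plan is to work in the basis of allowed elementary $2$-paths and to sort them according to their pair of endpoints. Write $\omega=\sum \omega^{ijk}e_{ijk}$, where the sum runs over all allowed $2$-paths, i.e. over triples with $i\to j\to k$; recall that such paths are automatically regular, so that $i\neq j$ and $j\neq k$, though possibly $i=k$. I would group the terms by the ordered pair $(i,k)$ of endpoints and distinguish three cases: (a) $i=k$, giving double edges $e_{iji}$; (b) $i\neq k$ and $i\to k$, giving triangles $e_{ijk}$; and (c) $i\neq k$ and $i\not\to k$. The first observation is that the paths in cases (a) and (b) already lie in $\Omega_{2}$: using $\partial e_{ijk}=e_{jk}-e_{ik}+e_{ij}$, the outer terms $e_{ij}$ and $e_{jk}$ are always allowed, while the middle term $e_{ik}$ is irregular (hence $0$) when $i=k$ and allowed when $i\to k$. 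So only case (c) can obstruct $\partial$-invariance, and these are exactly the terms I must express through squares.

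Next I would isolate the obstruction. For a fixed pair $(i,k)$ with $i\neq k$ and $i\not\to k$, the $1$-path $e_{ik}$ is regular but not allowed, so the coefficient of $e_{ik}$ in $\partial\omega$ must vanish for $\omega$ to lie in $\Omega_{2}$. Tracking where $e_{ik}$ can appear in $\partial\omega=\sum\omega^{abc}(e_{ab}-e_{ac}+e_{bc})$, the contributions from the $e_{ab}$ and $e_{bc}$ parts would each force $i\to k$ and thus vanish, leaving only the $-e_{ac}$ part with $a=i$, $c=k$. Hence the coefficient of $e_{ik}$ equals $-\sum_{j}\omega^{ijk}$, the sum being taken over all midpoints $j$ with $i\to j\to k$. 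The condition $\omega\in\Omega_{2}$ therefore forces $\sum_{j}\omega^{ijk}=0$ for every such pair $(i,k)$; this is the key structural identity, and it decouples completely across the different endpoint pairs, since each non-allowed $e_{ik}$ receives contributions from case-(c) terms with those exact endpoints only.

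Finally I would convert each group in case (c) into squares. Fix $(i,k)$ as above, let $j_{0}$ be any midpoint with $i\to j_{0}\to k$ occurring in the support of $\omega$, and use $\sum_{j}\omega^{ijk}=0$ to rewrite $\sum_{j}\omega^{ijk}e_{ijk}=\sum_{j\neq j_{0}}\omega^{ijk}(e_{ijk}-e_{ij_{0}k})$. Each difference $e_{ijk}-e_{ij_{0}k}$ is exactly a path of Type 3, and a direct boundary computation confirms that the offending $-e_{ik}$ cancels, so that it lies in $\Omega_{2}$. Collecting cases (a), (b), (c) then exhibits $\omega$ as an integer linear combination of the three listed types. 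The main obstacle is the bookkeeping in the middle step: one must argue carefully that $e_{ik}$ gets no contribution from the $e_{ab}$ and $e_{bc}$ parts of the boundary — which is precisely where the hypothesis $i\not\to k$ enters — and that the resulting vanishing-sum constraints genuinely separate by endpoint pair, since it is this separation that allows them to be solved independently and turned into squares.
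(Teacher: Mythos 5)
Your proposal is correct and follows essentially the same route as the paper: classify allowed terms $e_{ijk}$ by the endpoint pair, note that the double-edge and triangle cases are already $\partial$-invariant, and use the fact that the non-allowed $1$-path $e_{ik}$ must cancel in $\partial\omega$ to deduce $\sum_{j}\omega^{ijk}=0$ for each pair with $i\neq k$, $i\not\rightarrow k$, from which the square decomposition follows. The only (harmless) difference is cosmetic: the paper pairs off $+1$ and $-1$ occurrences, while you anchor at a base midpoint $j_{0}$ and write the group as $\sum_{j\neq j_{0}}\omega^{ijk}\left(e_{ijk}-e_{ij_{0}k}\right)$, which handles arbitrary integer coefficients slightly more cleanly.
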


\begin{proof}
Since the $2$-path $\omega $ is allowed, it can be represented as a sum of
elementary $2$-path $e_{ijk}$ with $i\rightarrow j\rightarrow k$ multiplied
with $+1$ or $-1$. If $k=i$ then $e_{ijk}$ is a double edge. If $i\neq k$
and $i\rightarrow k$ then $e_{ijk}$ is a triangle. Subtracting from $\omega $
all double edges and triangles, we can assume that $\omega $ has no such
terms any more. Then, for any term $e_{ijk}$ in $\omega $ we have $i\neq k$
and $i\not\rightarrow k$. Fix such a pair $i,k$ and consider any vertex $j$
with $i\rightarrow j\rightarrow k$. The $1$-path $\partial \omega $ is the
sum of $1$-paths of the form%
\begin{equation*}
\partial e_{ijk}=e_{ij}-e_{ik}+e_{jk}.
\end{equation*}%
Since $\partial \omega $ is allowed but $e_{ik}$ is not allowed, the term $%
e_{ik}$ should cancel out after we sum up all such terms over all possible $%
j $. Therefore, the number of $j$ such that $e_{ijk}$ enters $\omega $ with
coefficient $+1$ is equal to the number of $j$ such that $e_{ijk}$ enters in
$\omega $ with the coefficient $-1$. Combining the pair with $+1$ and $-1$
together, we obtain that $\omega $ is the sum of the terms of the third type
(squares).
\end{proof}

\begin{theorem}
\label{t2.9} Let $G$ and $G^{\prime }$ be two digraphs, and $f\colon
G\rightarrow G^{\prime }$ be a digraph map. Then the map $f_{\ast }|_{\Omega
_{p}(G)}$ (where $f_{\ast }$ is the induced map \emph{(\ref{RpV})}) provides
a morphism of chain complexes
\begin{equation*}
\Omega _{\ast }(G,\mathbb{K})\rightarrow \Omega _{\ast }(G^{\prime },\mathbb{%
K})
\end{equation*}%
and, consequently, a homomorphism of homology groups
\begin{equation*}
H_{\ast }(G,\mathbb{K})\rightarrow H_{\ast }(G^{\prime },\mathbb{K})
\end{equation*}%
that will also be denoted by $f_{\ast }$.
\end{theorem}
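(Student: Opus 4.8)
The statement asserts two things: that $f_{\ast}$ maps $\Omega_{p}(G)$ into $\Omega_{p}(G')$ and commutes with $\partial$ there (hence is a chain map), and that this induces a map on homology. The second claim is automatic once the first is established, since any morphism of chain complexes descends to homology. So the plan is to concentrate entirely on verifying that $f_{\ast}$ restricts to a chain map $\Omega_{\ast}(G)\to\Omega_{\ast}(G')$.

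The key point is that two facts about $f_{\ast}$ on the \emph{regular} chain complex $\mathcal{R}_{\ast}$ are already available from the excerpt: first, $f_{\ast}$ commutes with $\partial$ on $\mathcal{R}_{\ast}$ (this is the statement right after (\ref{RpV}) that the induced map is a chain-complex morphism); second, the explicit formula (\ref{fe}) tells us how $f_{\ast}$ acts on elementary regular paths, sending $e_{i_0\dots i_p}$ to $e_{f(i_0)\dots f(i_p)}$ when the image is regular and to $0$ otherwise. The remaining work is purely about the \emph{allowed} subcomplex. First I would check that $f_{\ast}$ carries $\mathcal{A}_{p}(G)$ into $\mathcal{A}_{p}(G')$: it suffices to verify this on a generator $e_{i_0\dots i_p}$ with $i_0\to i_1\to\dots\to i_p$ in $G$. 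By the morphism property (Definition \ref{d2.2}), each edge $i_k\to i_{k+1}$ gives $f(i_k)\,\overrightarrow{=}\,f(i_{k+1})$, i.e.\ either $f(i_k)\to f(i_{k+1})$ or $f(i_k)=f(i_{k+1})$. If all the inequalities $f(i_k)\neq f(i_{k+1})$ hold, then $f(i_0)\dots f(i_p)$ is an allowed path in $G'$ and $f_{\ast}(e_{i_0\dots i_p})=e_{f(i_0)\dots f(i_p)}\in\mathcal{A}_p(G')$; if some consecutive images coincide, the image path is irregular and (\ref{fe}) sends it to $0$, which lies trivially in $\mathcal{A}_p(G')$. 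Either way $f_{\ast}(\mathcal{A}_p(G))\subset\mathcal{A}_p(G')$.

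With this in hand I would finish by showing $f_{\ast}$ preserves the $\partial$-invariant subspaces $\Omega_p$. Take $v\in\Omega_p(G)$, so $v\in\mathcal{A}_p(G)$ and $\partial v\in\mathcal{A}_{p-1}(G)$ by definition (\ref{PE=}). Then $f_{\ast}v\in\mathcal{A}_p(G')$ by the previous step, and since $f_{\ast}\partial=\partial f_{\ast}$ on $\mathcal{R}_{\ast}$ we get $\partial(f_{\ast}v)=f_{\ast}(\partial v)$, which lies in $f_{\ast}(\mathcal{A}_{p-1}(G))\subset\mathcal{A}_{p-1}(G')$ by the step applied in degree $p-1$. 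Hence $f_{\ast}v\in\Omega_p(G')$, so $f_{\ast}$ restricts to a map $\Omega_p(G)\to\Omega_p(G')$ commuting with $\partial$, i.e.\ a morphism of chain complexes. The induced homomorphism on $H_{\ast}$ follows formally.

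I do not expect any serious obstacle here; the proof is essentially a bookkeeping assembly of facts already prepared in the preceding subsections. The only subtlety worth stating carefully is the dichotomy in (\ref{fe}): one must not forget the case where $f$ collapses an edge (making consecutive image vertices equal), which produces an irregular path that is \emph{defined to be} $0$ under the identification (\ref{Rp=}). Handling this collapse cleanly is what makes the stability of $\mathcal{A}_p$ under $f_{\ast}$ work, and it is the one place where the regularity convention does real work rather than being a triviality.
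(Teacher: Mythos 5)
Your proposal is correct and follows essentially the same route as the paper's own proof: reduce to showing $f_{\ast }\left( \mathcal{A}_{p}\left( G\right) \right) \subset \mathcal{A}_{p}\left( G^{\prime }\right)$ via the regular/irregular dichotomy of (\ref{fe}), then combine with $f_{\ast }\partial =\partial f_{\ast }$ on $\mathcal{R}_{\ast }$ to get $f_{\ast }\left( \Omega _{p}\left( G\right) \right) \subset \Omega _{p}\left( G^{\prime }\right)$. You also correctly identify the one point where care is needed, namely that a collapsed edge yields an irregular path which is set to $0$ and hence lies trivially in $\mathcal{A}_{p}\left( G^{\prime }\right)$ --- exactly the case the paper handles the same way.
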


\begin{proof}
By construction $\Omega _{p}\left( G\right) $ is a submodule of $\mathcal{R}%
_{p}\left( G\right) $, and all we need to prove is that%
\begin{equation}
f_{\ast }\left( \Omega _{p}\left( G\right) \right) \subset \Omega _{p}\left(
G^{\prime }\right) .  \label{f*}
\end{equation}%
Let us first show that%
\begin{equation*}
f_{\ast }\left( \mathcal{A}_{p}\left( G\right) \right) \subset \mathcal{A}%
_{p}\left( G^{\prime }\right) .
\end{equation*}%
It suffices to prove that if $e_{i_{0}...i_{p}}$ is allowed on $G$ then $%
f_{\ast }\left( e_{i_{0}...i_{p}}\right) $ is allowed on $G^{\prime }$.
Indeed, if $e_{f\left( i_{0}\right) ...\left( i_{p}\right) }$ is irregular
then we have by (\ref{fe}) that $f_{\ast }\left( e_{i_{0}...i_{p}}\right)
=0\in \mathcal{A}_{p}\left( G^{\prime }\right) .$ If $e_{f\left(
i_{0}\right) ...\left( i_{p}\right) }$ is regular then $f\left( i_{k}\right)
\neq f\left( i_{k+1}\right) $ for all $k=0,...,p-1$. Since $i_{k}\rightarrow
i_{k+1}$ on $G$, by the definition of a digraph map we have either $f\left(
i_{k}\right) \rightarrow f\left( i_{k+1}\right) $ on $G^{\prime }$ or $%
f\left( i_{k}\right) =f\left( i_{k+1}\right) $. Since the second possibility
is excluded, we obtain $f\left( i_{k}\right) \rightarrow f\left(
i_{k+1}\right) $ for all $k$, whence it follows that $f_{\ast }\left(
e_{i_{0}...i_{p}}\right) =e_{f\left( i_{0}\right) ...\left( i_{p}\right) }$
is allowed on $G^{\prime }$.

Now we can prove (\ref{f*}). For any $v\in \Omega _{p}\left( G\right) $ we
have by (\ref{PE=}) $v\in \mathcal{A}_{p}\left( G\right) $ and $\partial
v\in \mathcal{A}_{p-1}\left( G\right) $, whence%
\begin{equation*}
f_{\ast }\left( v\right) \in \mathcal{A}_{p}\left( G^{\prime }\right) \
\text{and\ \ }\partial \left( f_{\ast }\left( v\right) \right) =f_{\ast
}\left( \partial v\right) \in \mathcal{A}_{p-1}\left( G^{\prime }\right) ,
\end{equation*}%
which implies $f_{\ast }\left( v\right) \in \Omega _{p}\left( G^{\prime
}\right) .$
\end{proof}

\label{SecPropertiesHomology}

\subsection{Cylinders}

\label{SecCyl}For any digraph $G$ consider its product $G\boxdot I$ with the
digraph $I=\left( ^{0}\bullet \rightarrow \bullet ^{1}\right) $ (see
Definition \ref{d2.3}).

\begin{definition}
\RM The digraph $G\boxdot I$ is called the \emph{cylinder} over $G$ and will
be denoted by $\limfunc{Cyl}G$ or by $\widehat{G}$.
\end{definition}

By the definition of Cartesian product, the set of vertices of $\widehat{G}$
is $\widehat{V}=V\times \left\{ 0,1\right\} $, and the set $\widehat{E}$ of
its edges is defined by the rule: $\left( x,a\right) \rightarrow \left(
y,b\right) $ if and only if either $x\rightarrow y$ in $G$ and $a=b$ or $x=y$
and $a\rightarrow b$ in $I$. We shall put the hat $\widehat{}$ over all
notation related to $\widehat{G}$, for example, $\widehat{\mathcal{R}}_{p}:=%
\mathcal{R}_{p}(\widehat{G})$ and $\widehat{\Omega }_{p}:=\Omega _{p}(%
\widehat{G})$. One can identify $\widehat{V}=V\times \left\{ 0,1\right\} $
with $V\sqcup V^{\prime }$ where $V^{\prime }$ is a copy of $V$, and use the
notation $\left( x,0\right) \equiv x$ and $\left( x,1\right) \equiv
x^{\prime }.$

Define the operation of \emph{lifting }paths from $G$ to $\widehat{G}$ as
follows. If $v=e_{i_{0}...i_{p}}$ then $\widehat{v}$ is a $\left( p+1\right)
$-path in $\widehat{G}$ defined by
\begin{equation}
\widehat{v}=\sum_{k=0}^{p}\left( -1\right) ^{k}e_{i_{0}...i_{k}i_{k}^{\prime
}...i_{p}^{\prime }}.  \label{elift}
\end{equation}%
By $\mathbb{K}$-linearity this definition extends to all $v\in \mathcal{R}%
_{p}$,$\,$ thus giving $\widehat{v}\in \widehat{\mathcal{R}}_{p+1}$. It
follows that, for any $v\in \mathcal{R}_{p}$ and any path $i_{0}...i_{p}$ on
$G$,%
\begin{equation}
\widehat{v}^{i_{0}...i_{k}i_{k}^{\prime }...i_{p}^{\prime }}=\left(
-1\right) ^{k}v^{i_{0}...i_{p}}.  \label{vlift}
\end{equation}%
Clearly, $i_{0}...i_{p}$ is allowed in $G$ if and only if $%
i_{0}...i_{k}i_{k}^{\prime }...i_{p}^{\prime }$ is allowed in $\widehat{G}$:%
\begin{equation*}
\begin{array}{ccccccccc}
&  & \cdots & \overset{i_{k}^{\prime }}{\bullet } & \longrightarrow &
\overset{i_{k+1}^{\prime }}{\bullet } & \longrightarrow & \cdots &
\longrightarrow \overset{i_{p}^{\prime }}{\bullet } \\
&  &  & \uparrow &  & \uparrow &  &  &  \\
\overset{i_{0}}{\bullet }\longrightarrow & \cdots & \longrightarrow &
\overset{i_{k}}{\bullet } & \longrightarrow & \overset{i_{k+1}}{\bullet } &
\cdots &  &
\end{array}%
,\
\end{equation*}%
for some/all $k$. Hence, we see that $v\in \mathcal{A}_{p}$ if and only if $%
\widehat{v}\in \widehat{\mathcal{A}}_{p+1}.$

\begin{proposition}
\label{p2.10}If $v\in \Omega _{p}$ then $\widehat{v}\in \widehat{\Omega }%
_{p+1}.$
\end{proposition}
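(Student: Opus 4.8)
The plan is to reduce the assertion to a single ``prism'' identity for the lifting operator and then read off allowedness term by term. By the definition (\ref{PE=}) of $\widehat{\Omega}_{p+1}$, proving $\widehat{v}\in\widehat{\Omega}_{p+1}$ amounts to two things: that $\widehat{v}\in\widehat{\mathcal{A}}_{p+1}$ and that $\partial\widehat{v}\in\widehat{\mathcal{A}}_{p}$. The first is free: since $v\in\Omega_{p}\subset\mathcal{A}_{p}$, the equivalence established just before the statement, namely $v\in\mathcal{A}_{p}\Leftrightarrow\widehat{v}\in\widehat{\mathcal{A}}_{p+1}$, gives $\widehat{v}\in\widehat{\mathcal{A}}_{p+1}$ at once. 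So everything hinges on the second condition, and the whole argument is organized around computing $\partial\widehat{v}$.

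The key step, which I would isolate as a lemma, is the identity
\begin{equation}
\partial\widehat{v}=v'-v-\widehat{\partial v}, \label{prism}
\end{equation}
valid for every $v\in\mathcal{R}_{p}(G)$; here $v$ is viewed as a path on the lower copy $V\times\{0\}\subset\widehat{G}$ and $v'$ denotes its image on the upper copy $V\times\{1\}$, i.e.\ $e_{i_{0}\dots i_{p}}\mapsto e_{i_{0}'\dots i_{p}'}$. First I would verify (\ref{prism}) for an elementary regular path $v=e_{i_{0}\dots i_{p}}$ and then extend by $\mathbb{K}$-linearity. Applying the boundary formula (\ref{3.4}) to each summand $e_{i_{0}\dots i_{k}i_{k}'\dots i_{p}'}$ of $\widehat{v}$, I would sort the resulting deletions into two groups: the \emph{interior} deletions, removing some $i_{m}$ with $m<k$ or some $i_{m}'$ with $m>k$, and the two \emph{diagonal} deletions, removing the vertices $i_{k}$ or $i_{k}'$ adjacent to the jump from the lower to the upper copy. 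The diagonal deletions telescope: deleting $i_{k}'$ from the $k$-th summand and deleting $i_{k+1}$ from the $(k+1)$-st summand both yield the elementary path $e_{i_{0}\dots i_{k}i_{k+1}'\dots i_{p}'}$, and the sign count $(-1)^{k}(-1)^{k+1}$ against $(-1)^{k+1}(-1)^{k+1}$ shows they cancel in consecutive pairs; only the two extreme survivors remain, $+v'$ (deleting $i_{0}$ from the first summand) and $-v$ (deleting $i_{p}'$ from the last), contributing $v'-v$. The interior deletions reassemble, with the matching sign $(-1)^{m}$, into the lift of the face $e_{i_{0}\dots\widehat{i_{m}}\dots i_{p}}$ occurring in $\partial v$, producing $-\widehat{\partial v}$.

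This sign-and-telescoping bookkeeping is the genuinely technical heart of the argument and the place I expect to be the main obstacle; I would also double-check consistency in the regular quotient $\mathcal{R}_{\ast}$, which holds because lifting preserves regularity and the irregular deletions cancel identically on both sides of (\ref{prism}). Granting (\ref{prism}), the proposition follows immediately. Since $v\in\mathcal{A}_{p}$, both copies $v$ and $v'$ are allowed in $\widehat{G}$, so $v'-v\in\widehat{\mathcal{A}}_{p}$; and since $v\in\Omega_{p}$ we have $\partial v\in\mathcal{A}_{p-1}$, whence the same lifting equivalence gives $\widehat{\partial v}\in\widehat{\mathcal{A}}_{p}$. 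Thus the right-hand side of (\ref{prism}) lies in $\widehat{\mathcal{A}}_{p}$, so $\partial\widehat{v}\in\widehat{\mathcal{A}}_{p}$, completing the verification that $\widehat{v}\in\widehat{\Omega}_{p+1}$. It is worth noting that the hypothesis $v\in\Omega_{p}$ (rather than merely $v\in\mathcal{A}_{p}$) enters exactly once, to ensure $\widehat{\partial v}$ is allowed, which is precisely what the proposition needs.
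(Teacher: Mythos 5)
Your proof is correct, and it reaches the same pivotal identity as the paper --- your (\ref{prism}) is exactly the paper's (\ref{dhat}), $\partial \widehat{v}=-\widehat{\partial v}+v^{\prime }-v$, and your concluding allowedness argument (using $v\in \mathcal{A}_{p}$ to get $v^{\prime }-v\in \widehat{\mathcal{A}}_{p}$ and $\partial v\in \mathcal{A}_{p-1}$ to get $\widehat{\partial v}\in \widehat{\mathcal{A}}_{p}$) is word for word what the paper does. Where you differ is in how the identity itself is established. The paper first proves a Leibniz-type product rule for the lifting, $\widehat{uv}=\widehat{u}v^{\prime }+\left( -1\right) ^{p+1}u\widehat{v}$ (its (\ref{hatp})), and then derives (\ref{dhat}) by induction on $p$, writing $v=ue_{i_{p}}$; the sign manipulations are absorbed into the product rule and a short inductive computation. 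You instead expand $\partial \widehat{v}$ directly from (\ref{3.4}) applied to each summand $\left( -1\right) ^{k}e_{i_{0}\dots i_{k}i_{k}^{\prime }\dots i_{p}^{\prime }}$, split the deletions into diagonal and interior ones, and check the cancellations by hand. I verified your bookkeeping: the diagonal pair (delete $i_{k}^{\prime }$ from the $k$-th summand, sign $\left( -1\right) ^{k}\left( -1\right) ^{k+1}=-1$; delete $i_{k+1}$ from the $\left( k+1\right) $-st, sign $\left( -1\right) ^{k+1}\left( -1\right) ^{k+1}=+1$) indeed telescopes to $v^{\prime }-v$, and the interior deletions carry coefficients $\left( -1\right) ^{k+m}$ resp.\ $\left( -1\right) ^{k+m+1}$ that match those of $-\widehat{\partial v}$ in both cases $m<k$ and $m>k$. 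Your remark on the regular quotient is also sound, and can be said even more simply: if $v$ is irregular then every summand of $\widehat{v}$ is irregular (an equality $i_{l}=i_{l+1}$ survives either in the lower copy, the upper copy, or as $i_{l}^{\prime }=i_{l+1}^{\prime }$ at the jump), so the lifting maps $I_{p}$ into $\widehat{I}_{p+1}$ and the identity, proved in $\Lambda _{\ast }$, descends to $\mathcal{R}_{\ast }$. The trade-off between the two routes: the paper's product rule is a clean reusable structural fact that keeps the signs tame, while your direct expansion is self-contained and makes the geometric content visible --- the boundary of the prism is the top minus the bottom minus the lifted boundary of the base --- at the price of heavier index management, which you handled correctly.
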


\begin{proof}
We need to prove that if $v\in \mathcal{A}_{p}$ and $\partial v\in \mathcal{A%
}_{p-1}$ then $\partial \widehat{v}\in \widehat{\mathcal{A}}_{p}.$ Let us
prove first some properties of the lifting. For any path $v$ in $G$ define
its image $v^{\prime }$ in $G^{\prime }=\left( V^{\prime },E^{\prime
}\right) $ by%
\begin{equation*}
\left( e_{i_{0}...i_{p}}\right) ^{\prime }=e_{i_{0}^{\prime
}...i_{p}^{\prime }}.
\end{equation*}%
Let us show first that, for any $p$-path \thinspace $u$ and $q$-path $v$ on $%
G$, the following identity holds:%
\begin{equation}
\widehat{uv}=\widehat{u}v^{\prime }+\left( -1\right) ^{p+1}u\widehat{v}.
\label{hatp}
\end{equation}%
It suffices to prove it for $u=e_{i_{0}...i_{p}}$ and $v=e_{j_{0}...j_{q}}.$
Then $uv=e_{i_{0}...i_{p}j_{0}...j_{q}}$ and%
\begin{eqnarray*}
\widehat{uv} &=&\sum_{k=0}^{p}\left( -1\right)
^{k}e_{i_{0}...i_{k}i_{k}^{\prime }...i_{p}^{\prime }j_{0}^{\prime
}...j_{q}^{\prime }}+\sum_{k=0}^{q}\left( -1\right)
^{k+p+1}e_{i_{0}...i_{p}j_{0}...j_{k}j_{k}^{\prime }...j_{q}^{\prime }} \\
&=&\widehat{u}v^{\prime }+\left( -1\right) ^{p+1}u\widehat{v}.
\end{eqnarray*}%
Now let us show that, for any $p$-path $v$ with $p\geq 0$
\begin{equation}
\partial \widehat{v}=-\widehat{\partial v}+v^{\prime }-v.  \label{dhat}
\end{equation}%
It suffices to prove it for $v=e_{i_{0}...i_{p}}$, which will be done by
induction in $p$. For $p=0$ write $v=e_{a}$ so that $\partial v=0$ and $%
\widehat{v}=e_{aa^{\prime }}$ whence%
\begin{equation*}
\partial \widehat{v}=e_{a^{\prime }}-e_{a}=-\widehat{\partial v}+v^{\prime
}-v.
\end{equation*}%
For $p>1$ write $v=ue_{i_{p}}$ where $u=e_{i_{0}...i_{p-1}}$. Using (\ref%
{hatp}) and the inductive hypothesis with the $\left( p-1\right) $-path $u$
we obtain%
\begin{eqnarray*}
\partial \widehat{v} &=&\partial \left( \widehat{u}e_{i_{p}^{\prime
}}+\left( -1\right) ^{p}ue_{i_{p}i_{p}^{\prime }}\right) \\
&=&\left( \partial \widehat{u}\right) e_{i_{p}^{\prime }}+\left( -1\right)
^{p+1}\widehat{u}+\left( -1\right) ^{p}\left( \partial u\right)
e_{i_{p}i_{p}^{\prime }}+u\left( e_{i_{p}^{\prime }}-e_{i_{p}}\right) \\
&=&(-\widehat{\partial u}+u^{\prime }-u)e_{i_{p}^{\prime }}+\left( -1\right)
^{p+1}\widehat{u}+\left( -1\right) ^{p}\left( \partial u\right)
e_{i_{p}i_{p}^{\prime }}+ue_{i_{p}^{\prime }}-v \\
&=&-(\widehat{\partial u})e_{i_{p}^{\prime }}+v^{\prime }+\left( -1\right)
^{p+1}\widehat{u}+\left( -1\right) ^{p}\left( \partial u\right)
e_{i_{p}i_{p}^{\prime }}-v~.
\end{eqnarray*}%
On the other hand,%
\begin{equation*}
\widehat{\partial v}=\left( \left( \partial u\right) e_{i_{p}}+\left(
-1\right) ^{p}u\right) ^{\symbol{94}}=(\widehat{\partial u})e_{i_{p}^{\prime
}}+\left( -1\right) ^{p-1}\left( \partial u\right) e_{i_{p}i_{p}^{\prime
}}+\left( -1\right) ^{p}\widehat{u},
\end{equation*}%
whence it follows that $\partial \widehat{v}+\widehat{\partial v}=v^{\prime
}-v$, which finishes the proof of (\ref{dhat}).

Finally, if $v\in \mathcal{A}_{p}$ and $\partial v\in \mathcal{A}_{p-1}$
then $v^{\prime }$ and $\widehat{\partial v}$ belong to $\widehat{\mathcal{A}%
}_{p}$ whence it follows from (\ref{dhat}) also $\partial \widehat{v}\in
\widehat{\mathcal{A}}_{p}.$ This proves that $\widehat{v}\in \widehat{%
\mathcal{A}}_{p+1}.$
\end{proof}

\begin{example}
\label{e2.12} \RM The cylinder over the digraph $^{0}\bullet \rightarrow
\bullet ^{1}$ is a square
\begin{equation*}
\begin{array}{ccc}
^{2}\bullet & \longrightarrow & \bullet ^{3} \\
\ \uparrow &  & \uparrow \  \\
^{0}\bullet & \longrightarrow & \bullet ^{1}%
\end{array}%
\
\end{equation*}%
Lifting a $\partial $-invariant $1$-path $e_{01}\in \Omega _{1}$ we obtain a
$\partial $-invariant $2$-path on the square: $e_{00^{\prime }1^{\prime
}}-e_{011^{\prime }}$, that can be rewritten in the form $e_{023}-e_{013}.$

The cylinder over a square is a $3$-cube that is shown in Fig. \ref{pic9}.%
\FRAME{ftbphFU}{4.5152in}{1.7668in}{0pt}{\Qcb{$3$-cube}}{\Qlb{pic9}}{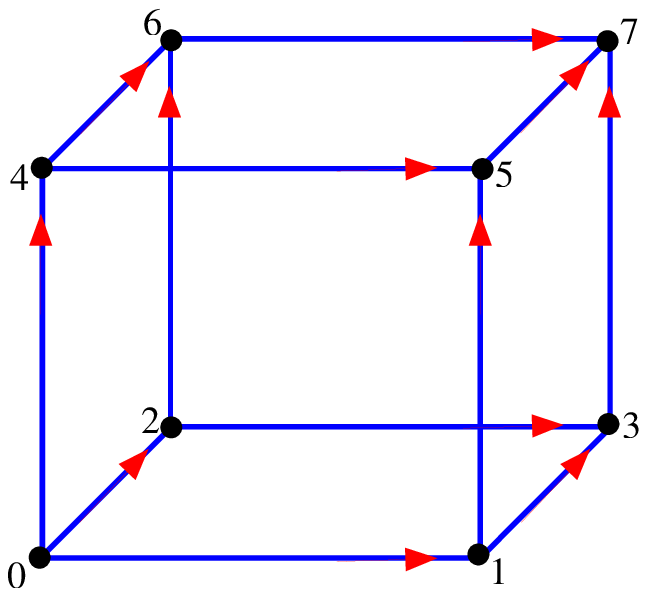%
}{\special{language "Scientific Word";type "GRAPHIC";maintain-aspect-ratio
TRUE;display "USEDEF";valid_file "F";width 4.5152in;height 1.7668in;depth
0pt;original-width 6.3027in;original-height 2.444in;cropleft "0";croptop
"1";cropright "1";cropbottom "0";filename 'pic9.eps';file-properties
"XNPEU";}}

Lifting the $2$-path $e_{023}-e_{013}$ we obtain a $\partial $-invariant $3$%
-path on the $3$-cube:%
\begin{equation*}
e_{0467}-e_{0267}+e_{0237}-e_{0457}+e_{0157}-e_{0137}.
\end{equation*}%
Defining further $n$-cube as the cylinder over $\left( n-1\right) $-cube, we
see that $n$-cube determines a $\partial $-invariant $n$-path that is a
lifting of a $\partial $-invariant $\left( n-1\right) $-path from $\left(
n-1\right) $-cube and that is an alternating sum of $n!$ elementary terms.
One can show that this $n$-path generates $\Omega _{n}$ on $n$-cube (see
\cite{Mi3}).
\end{example}

\section{Homotopy theory of digraphs}

\label{S3}\setcounter{equation}{0}In this Section we introduce a homotopy
theory of digraphs and establish the relations between this theory and the
homology theory of digraphs of \cite{Mi2012} and \cite{Mi3}.

\subsection{The notion of homotopy}

Fix $n\geq 0$. Denote by $I_{n}$ any digraph whose the set of vertices is $%
\{0,1,\dots ,n\}$ and the set of edges contains exactly one of the edges $%
i\rightarrow (i+1)$, $(i+1)\rightarrow i$ for any $i=0,1,\dots ,n-1$, and no
other edges. A digraph $I_{n}$ is called \textit{a }\emph{line digraph}.
Denote by $\mathcal{I}_{n}$ the set of all line digraphs $\mathcal{I}_{n}$
and by $\mathcal{I}$ the union of all $\mathcal{I}_{n}$.

Clearly, there is only one digraph in $\mathcal{I}_{0}$ -- the one-point
digraph. There are two digraphs in $\mathcal{I}_{1}$: the digraph $I$ with
the edge $(0\rightarrow 1)$ and the digraph $I^{-}$ with the edge $\left(
1\rightarrow 0\right) $.

\begin{definition}
\label{d3.1} \RM Let $G,H$ be two digraphs. Two digraph maps $f,g\colon
G\rightarrow H$ are\emph{\ }called \emph{homotopic} if there exists a line
digraph $I_{n}\in \mathcal{I}_{n}$ with $n\geq 1$ and a digraph map%
\begin{equation*}
F\colon G\boxdot I_{n}\rightarrow H
\end{equation*}%
such that
\begin{equation}
F|_{G\boxdot \{0\}}=f\ \ \text{and}\ \ F|_{G\boxdot \{n\}}=g.  \label{Ffg}
\end{equation}%
In this case we shall write $f\simeq g$. The map $F$ is called a \emph{%
homotopy} between $f$ and $g$.
\end{definition}

In the case $n=1$ we refer to the map $F$ as an \emph{one-step homotopy}
between $f$ and $g$. In this case the identities (\ref{Ffg}) determine $F$
uniquely, and the requirement is that the so defined $F$ is a digraph map of
$G\boxdot I_{1}$ to $H$. Since for $I_{1}$ there are only two choices $%
0\rightarrow 1$ and $0\leftarrow 1$, we obtain that $f$ and $g$ are one-step
homotopic, if
\begin{equation}
\text{either }f\left( x\right) \overrightarrow{=}g\left( x\right) \ \text{%
for\ all\ }x\in V_{G}\ \text{or\ \ }g\left( x\right) \overrightarrow{=}%
f\left( x\right) \ \text{for\ all\ }x\in V_{G}.  \label{f=g}
\end{equation}%
It follows that $f$ and $g$ are homotopic if there is a finite sequence of
digraph maps $f=f_{0},f_{1},...,f_{n}=g$ from $G$ to $H$ such that $f_{k}$
and $f_{k+1}$ are one-step homotopic. It is obvious that the relation "$%
\simeq $\textquotedblright\ is an equivalence relation on the set of all
digraph maps from $G$ to $H$.

\begin{definition}
\label{t5.3}\RM Two digraphs $G$ and $H$ are called \emph{homotopy equivalent%
} if there exist digraph maps
\begin{equation}
f:G\rightarrow H,\ \ \ g:H\rightarrow G  \label{fGH}
\end{equation}%
such that
\begin{equation}
f\circ g\simeq \func{id}_{H},\ \ \ \ \ g\circ f\simeq \func{id}_{G}.
\label{fg}
\end{equation}%
In this case we shall write $H\simeq G$. The maps $f$ and $g$ as in (\ref{fg}%
) are called \emph{\ homotopy inverses }of each other.
\end{definition}

A digraph $G$ is called \emph{contractible} if $G\simeq \left\{ \ast
\right\} $ where $\left\{ \ast \right\} $ is a single vertex digraph. It
follows from Definition \ref{t5.3} that a digraph $G$ is contractible if and
only if there is a digraph map $h:G\rightarrow G$ such that the image of $h$
consists of a single vertex and $h\simeq \func{id}_{G}.$ Examples of
contractible digraphs will be given in Section \ref{SecRet}.

\subsection{Homotopy preserves homologies}

Now we can prove the first result about connections between homotopy and
homology theories for digraphs.

\begin{theorem}
\label{t3.4} Let $G,H$ be two digraphs.

\begin{itemize}
\item[$\left( i\right) $] Let $f\simeq g\colon G\rightarrow H$ be two
homotopic digraph maps. Then these maps induce the identical homomorphisms
of homology groups of $G$ and $H$, that is, the maps%
\begin{equation*}
f_{\ast }:H_{p}\left( G\right) \rightarrow H_{p}\left( H\right) \ \ \text{%
and\ \ \ }g_{\ast }:H_{p}\left( G\right) \rightarrow H_{p}\left( H\right)
\end{equation*}%
are identical.

\item[$\left( ii\right) $] If the digraphs $G$ and $H$ are homotopy
equivalent, then they have isomorphic homology groups. Furthermore, if the
homotopical equivalence of $G$ and $H$ is provided by the digraph maps \emph{%
(\ref{fGH})} then their induced maps $f_{\ast }$ and $g_{\ast }$ provide
mutually inverse isomorphisms of the homology groups of $G$ and $H$.
\end{itemize}
\end{theorem}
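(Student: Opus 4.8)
The plan is to prove part $\left( i\right) $ by constructing an explicit \emph{chain homotopy} between the chain maps $f_{\ast }$ and $g_{\ast }$ on $\Omega _{\ast }$, built from the lifting operation of Section \ref{SecCyl}, and then to deduce part $\left( ii\right) $ by a formal functoriality argument.

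First I would reduce $\left( i\right) $ to the case of a one-step homotopy. By Definition \ref{d3.1} and the discussion following it, if $f\simeq g$ then there is a finite sequence of digraph maps $f=f_{0},f_{1},\dots ,f_{n}=g$ with consecutive terms one-step homotopic; since equality of induced homology homomorphisms is transitive, it suffices to treat a single one-step homotopy. So assume $f,g\colon G\rightarrow H$ are one-step homotopic. By (\ref{f=g}) we may assume, after possibly interchanging $f$ and $g$ (which does not affect the claim $f_{\ast }=g_{\ast }$), that $f\left( x\right) \overrightarrow{=}g\left( x\right) $ for all $x$, so that the assignment $F\left( x,0\right) =f\left( x\right) $, $F\left( x,1\right) =g\left( x\right) $ defines a digraph map $F\colon \widehat{G}=G\boxdot I\rightarrow H$ with $F|_{G\boxdot \{0\}}=f$ and $F|_{G\boxdot \{1\}}=g$.

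Next I would define the candidate chain homotopy $L\colon \Omega _{p}\left( G\right) \rightarrow \Omega _{p+1}\left( H\right) $ by $L\left( v\right) =F_{\ast }\left( \widehat{v}\right) $, where $\widehat{\ \cdot \ }$ is the lifting (\ref{elift}). This is well defined: by Proposition \ref{p2.10} the lifting sends $\Omega _{p}\left( G\right) $ into $\widehat{\Omega }_{p+1}=\Omega _{p+1}(\widehat{G})$, and by Theorem \ref{t2.9} the chain map $F_{\ast }$ sends $\Omega _{p+1}(\widehat{G})$ into $\Omega _{p+1}\left( H\right) $. The key computation is to apply $F_{\ast }$ to the identity (\ref{dhat}), $\partial \widehat{v}=-\widehat{\partial v}+v^{\prime }-v$. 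Using $\partial F_{\ast }=F_{\ast }\partial $, together with the two identifications $F_{\ast }\left( v\right) =f_{\ast }\left( v\right) $ for the bottom copy (because $F|_{G\boxdot \{0\}}=f$) and $F_{\ast }\left( v^{\prime }\right) =g_{\ast }\left( v\right) $ for the top copy (because $F|_{G\boxdot \{1\}}=g$), this yields
\[
\partial L\left( v\right) =-L\left( \partial v\right) +g_{\ast }\left( v\right) -f_{\ast }\left( v\right) ,
\]
i.e. $f_{\ast }\left( v\right) -g_{\ast }\left( v\right) =-\left( \partial L+L\partial \right) \left( v\right) $, the desired chain homotopy identity. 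I expect the main subtlety to be the bookkeeping of the top/bottom identifications and the sign carried from (\ref{dhat}); this identity, rather than anything deep, is the crux of the argument. Granting it, for any cycle $v\in \ker \partial |_{\Omega _{p}}$ we get $f_{\ast }\left( v\right) -g_{\ast }\left( v\right) =-\partial L\left( v\right) \in \func{Im}\partial |_{\Omega _{p+1}}$, so $f_{\ast }$ and $g_{\ast }$ agree on homology, proving $\left( i\right) $.

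Finally I would deduce $\left( ii\right) $ formally. The assignment $f\mapsto f_{\ast }$ is functorial on homology: at the chain level $\left( g\circ f\right) _{\ast }=g_{\ast }\circ f_{\ast }$ and $\left( \func{id}\right) _{\ast }=\func{id}$ follow directly from the definition (\ref{fe}) of the induced map, hence the same holds on $H_{\ast }$. Given homotopy equivalences (\ref{fGH}) with $f\circ g\simeq \func{id}_{H}$ and $g\circ f\simeq \func{id}_{G}$, part $\left( i\right) $ gives $f_{\ast }\circ g_{\ast }=\left( f\circ g\right) _{\ast }=\left( \func{id}_{H}\right) _{\ast }=\func{id}$ on $H_{\ast }\left( H\right) $ and $g_{\ast }\circ f_{\ast }=\left( g\circ f\right) _{\ast }=\left( \func{id}_{G}\right) _{\ast }=\func{id}$ on $H_{\ast }\left( G\right) $. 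Thus $f_{\ast }$ and $g_{\ast }$ are mutually inverse isomorphisms, and in particular $H_{\ast }\left( G\right) \cong H_{\ast }\left( H\right) $.
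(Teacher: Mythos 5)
Your proposal is correct and follows essentially the same route as the paper: the same chain homotopy $L_{p}(v)=F_{\ast }\left( \widehat{v}\right) $ built from the lifting of Section \ref{SecCyl}, the same application of $\partial F_{\ast }=F_{\ast }\partial $ together with the identity (\ref{dhat}) to obtain $\partial L+L\partial =g_{\ast }-f_{\ast }$, and the same functoriality argument for part $\left( ii\right) $. Your reduction to a one-step homotopy via transitivity is just a repackaging of the paper's induction on $n$, and your explicit appeal to Proposition \ref{p2.10} and Theorem \ref{t2.9} for the well-definedness of $L$ makes a step the paper leaves implicit.
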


In particular, if a digraph $G$ is contractible, then all the homology
groups of $G$ are trivial, except for $H_{0}$.

\begin{proof}
$\left( i\right) $ Let $F$ be a homotopy between $f$ and $g$ as in
Definition \ref{d3.1}. Consider first the case $n=1$ and let $I_{n}$ be the
digraph $I=\left( 0\rightarrow 1\right) $ (the case $I_{n}=I^{-}$ is
similar). The maps $f$ and $g$ induce morphisms of chain complexes
\begin{equation*}
f_{\ast },g_{\ast }\colon \Omega _{\ast }(G)\rightarrow \Omega _{\ast }(H),
\end{equation*}%
and $F$ induces a morphism
\begin{equation*}
F_{\ast }\colon \Omega _{\ast }(G\boxdot I)\rightarrow \Omega _{\ast }(H).
\end{equation*}%
Note that, for any path $v\in \Omega _{\ast }(G\boxdot I)$ that lies in $%
G\boxdot \left\{ 0\right\} $, we have $F_{\ast }\left( v\right) =f_{\ast
}\left( v\right) $, and for any path $v^{\prime }\in \Omega _{\ast
}(G\boxdot I)$ that lies in $G\boxdot \left\{ 1\right\} $, we have $F_{\ast
}\left( v^{\prime }\right) =g\left( v^{\prime }\right) .$

In order to prove that $f_{\ast }$ and $g_{\ast }$ induce the identical
homomorphisms $H_{\ast }\left( G\right) \rightarrow H_{\ast }\left( H\right)
$, it suffices by \cite[Theorem 2.1, p.40]{Maclane} to construct a chain
homotopy between the chain complexes $\Omega _{\ast }\left( G\right) $ and $%
\Omega _{\ast }\left( H\right) $, that is, the $\mathbb{K}$-linear mappings%
\begin{equation*}
L_{p}\colon \Omega _{p}(G)\rightarrow \Omega _{p+1}(H)
\end{equation*}%
such that%
\begin{equation*}
\partial L_{p}+L_{p-1}\partial =g_{\ast }-f_{\ast }
\end{equation*}%
(note that all the terms here are mapping from $\Omega _{p}\left( G\right) $
to $\Omega _{p}\left( H\right) $). Let us define the mapping $L_{p}$ as
follows%
\begin{equation*}
L_{p}(v)=F_{\ast }\left( \widehat{v}\right) ,
\end{equation*}%
for any $v\in \Omega _{p}\left( G\right) $, where $\widehat{v}\in \Omega
_{p+1}\left( G\boxdot I\right) $ is lifting of $v$ to the graph $\widehat{G}%
=G\boxdot I$ defined in Section \ref{SecCyl}. Using $\partial F_{\ast
}=F_{\ast }\partial $ (see Theorem \ref{t2.9}) and the product rule (\ref%
{dhat}), we obtain
\begin{eqnarray*}
(\partial L_{p}+L_{p-1}\partial )(v) &=&\partial (F_{\ast }(\widehat{v}%
))+F_{\ast }(\widehat{\partial v}) \\
&=&F_{\ast }\left( \partial \widehat{v}\right) +F_{\ast }(\widehat{\partial v%
}) \\
&=&F_{\ast }(\partial \widehat{v}+\widehat{\partial v}) \\
&=&F_{\ast }\left( v^{\prime }-v\right) \\
&=&g_{\ast }\left( v\right) -f_{\ast }\left( v\right) .
\end{eqnarray*}%
The case of an arbitrary $n$ follows then by induction.

$\left( ii\right) $ Let $f,g$ be the maps from Definition \ref{t5.3}. Then
they induce the following mappings%
\begin{equation*}
H_{p}\left( G\right) \overset{f_{\ast }}{\rightarrow }H_{p}\left( H\right)
\overset{g_{\ast }}{\rightarrow }H_{p}\left( G\right) \overset{f_{\ast }}{%
\rightarrow }H_{p}\left( H\right) .
\end{equation*}%
By $\left( i\right) $ and (\ref{fg}) we have $f_{\ast }\circ g_{\ast }=\func{%
id}$ and $g_{\ast }\circ f_{\ast }=\func{id}$, which implies that $f_{\ast }$
and $g_{\ast }$ are mutually inverse isomorphisms of $H_{p}\left( G\right) $
and $H_{p}\left( H\right) $.
\end{proof}

\subsection{Retraction}

\label{SecRet}A sub-digraph $H$ of a digraph $G$ is a digraph whose set of
vertices is a subset of that of $G$ and the edges of $H$ are all those edges
of $G$ whose adjacent vertices belong to $H$.

\begin{definition}
\label{d3.5} \RM Let $G$ be a digraph and $H$ be its sub-digraph.

$\left( i\right) $ A \emph{retraction} of $G$ onto $H$ is a digraph map $%
r\colon G\rightarrow H$ such that $r|_{H}=\func{id}_{H}$.

$\left( ii\right) $ A retraction $r:G\rightarrow H$ is called a \emph{%
deformation retraction} if $i\circ r\simeq \func{id}_{G},$ where $i\colon
H\rightarrow G$ is the natural inclusion map.
\end{definition}

\begin{proposition}
\label{p3.6} Let $r:G\rightarrow H$ be a deformation retraction. Then $%
G\simeq H$ and the maps $r,i$ are homotopy inverses.
\end{proposition}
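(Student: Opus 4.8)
The plan is to verify directly that the pair of maps $r$ and $i$ satisfies the two homotopy relations required in Definition \ref{t5.3}, namely that $r\colon G\rightarrow H$ and $i\colon H\rightarrow G$ are homotopy inverses of each other. Establishing these two relations proves both assertions of the proposition at once, since homotopy equivalence $G\simeq H$ is by definition witnessed precisely by such a pair.

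First I would dispose of the relation $i\circ r\simeq \func{id}_{G}$. This is nothing but the defining property of a deformation retraction (Definition \ref{d3.5}(ii)), so it holds by hypothesis and requires no further argument.

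Next I would analyze the composition in the other order, $r\circ i\colon H\rightarrow H$. Since $i$ is the inclusion map, for every vertex $h$ of $H$ we have $r(i(h))=r(h)$, and since $r$ is a retraction we have $r|_{H}=\func{id}_{H}$, so $r(h)=h$. Hence $r\circ i=\func{id}_{H}$ as digraph maps, and in particular $r\circ i\simeq\func{id}_{H}$ because $\simeq$ is reflexive (being an equivalence relation, as noted after Definition \ref{d3.1}). Taking $f=r$ and $g=i$ in Definition \ref{t5.3}, the two relations $f\circ g=r\circ i\simeq\func{id}_{H}$ and $g\circ f=i\circ r\simeq\func{id}_{G}$ are exactly the conditions (\ref{fg}); therefore $G\simeq H$ and the maps $r,i$ are homotopy inverses.

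The only point requiring care is the bookkeeping of composition order: one must keep $r\circ i$, which collapses \emph{exactly} to $\func{id}_{H}$ because $r$ fixes $H$ pointwise, distinct from $i\circ r$, which is in general only \emph{homotopic} (not equal) to $\func{id}_{G}$. Beyond this observation the statement is an immediate unwinding of the definitions of retraction, deformation retraction, and homotopy equivalence, so I expect no genuine obstacle.
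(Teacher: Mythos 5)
Your proof is correct and follows exactly the same route as the paper's: observe that $r\circ i=\func{id}_{H}$ (hence $r\circ i\simeq \func{id}_{H}$) because $r$ is a retraction, combine with the hypothesis $i\circ r\simeq \func{id}_{G}$, and invoke Definition \ref{t5.3}. Your version merely spells out the pointwise computation $r(i(h))=h$ and the reflexivity of $\simeq$, which the paper leaves implicit.
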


\begin{proof}
By definition of retraction we have $r\circ i=\func{Id}_{H}$ and, in
particular $r\circ i\simeq \func{id}_{H}$. Since $i\circ r\simeq \func{id}%
_{G}$, we obtain by Definition \ref{t5.3} that $G\simeq H$.
\end{proof}

In general the existence of a deformation retraction $r:G\rightarrow H$ is a
stronger condition that the homotopy equivalence $G\simeq H$. However, in
the case when $H=\left\{ \ast \right\} $, the existence of a deformation
retraction $r:G\rightarrow \left\{ \ast \right\} $ is equivalent to the
contractibility of $G,$ which follows from the remark after Definition \ref%
{t5.3}.

The next two statements provide a convenient way of constructing a
deformation retraction.

\begin{proposition}
\label{c3.7}\label{Pr}Let $r:G\rightarrow H$ be a retraction of a digraph $G$
onto a sub-digraph \ $H$. Assume that there exists a finite sequence $%
\left\{ f_{k}\right\} _{k=0}^{n}$ of digraph maps $f_{k}:G\rightarrow G$
with the following properties:

\begin{enumerate}
\item $f_{0}=\func{id}_{G}$;

\item $f_{n}=i\circ r$ (where $i$ is the inclusion map $i:H\rightarrow G$),
that is, $f_{n}\left( v\right) =r\left( v\right) $ for all vertices $v$ of $%
G $;

\item for any $k=1,...,n$ either $f_{k-1}\left( x\right) $\thinspace $%
\overrightarrow{=}f_{k}\left( x\right) $ for all $x\in V_{G}$ or $%
f_{k}\left( x\right) $\thinspace $\overrightarrow{=}f_{k-1}\left( x\right) $
for all $x\in V_{G}.$
\end{enumerate}

Then $r$ is a deformation retraction, the digraphs $G$ and $H$ are homotopy
equivalent, and $i$, $r$ are their homotopy inverses.
\end{proposition}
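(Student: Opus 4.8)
The plan is to read hypothesis 3 as nothing more than the one-step homotopy criterion (\ref{f=g}) applied to consecutive maps in the sequence. Concretely, for each $k=1,\dots,n$, hypothesis 3 asserts that either $f_{k-1}(x)\,\overrightarrow{=}\,f_{k}(x)$ for all $x\in V_{G}$, or $f_{k}(x)\,\overrightarrow{=}\,f_{k-1}(x)$ for all $x\in V_{G}$. This is exactly condition (\ref{f=g}), so each pair $f_{k-1},f_{k}\colon G\rightarrow G$ is one-step homotopic. Since all the $f_{k}$ are assumed to be digraph maps, there is no well-definedness of an intermediate map to verify beyond what (\ref{f=g}) already guarantees.

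Next I would simply chain these one-step homotopies. By the remark following Definition \ref{d3.1}, two maps are homotopic precisely when they are joined by a finite sequence of one-step homotopic maps, and the given sequence $f_{0},f_{1},\dots,f_{n}$ is exactly such a chain. Therefore $\func{id}_{G}=f_{0}\simeq f_{n}=i\circ r$, that is, $i\circ r\simeq\func{id}_{G}$. By Definition \ref{d3.5}$(ii)$ this is precisely the statement that the retraction $r$ is a deformation retraction. With this established, Proposition \ref{p3.6} applies verbatim and yields at once that $G\simeq H$ and that $r,i$ are homotopy inverses of each other, which is all that is claimed.

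Because every step is a direct unwinding of the relevant definitions, I do not expect a genuine obstacle here; the proposition is essentially a repackaging of the homotopy machinery already in place. The only point deserving a moment's care is confirming that the one-step homotopy criterion (\ref{f=g}), which was originally phrased for maps into an ambient digraph $H$, applies equally to the self-maps $f_{k}\colon G\rightarrow G$. This is immediate, since the target digraph in that criterion is arbitrary; taking it to be $G$ itself changes nothing in the argument.
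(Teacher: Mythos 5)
Your proof is correct and is essentially identical to the paper's: you read hypothesis 3 as the one-step homotopy criterion (\ref{f=g}), chain the one-step homotopies to get $\func{id}_{G}=f_{0}\simeq f_{n}=i\circ r$, conclude that $r$ is a deformation retraction by Definition \ref{d3.5}$(ii)$, and invoke Proposition \ref{p3.6} for the homotopy equivalence. No gaps; your closing observation that (\ref{f=g}) applies to self-maps of $G$ is right and needs no further justification.
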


\begin{proof}
Since $f_{k-1}$ and $f_{k}$ satisfy (\ref{f=g}), we see that $f_{k-1}\simeq
f_{k}$ whence by induction we obtain that $f_{n}\simeq f_{0}$ and, hence, $%
i\circ r\simeq \func{id}_{G}.$ Therefore, $r$ is a deformation retraction,
and the rest follow from Proposition \ref{d3.5}.
\end{proof}

\begin{corollary}
\label{c3.8}Let $r:G\rightarrow H$ be a retraction of a digraph $G$ onto a
sub-digraph \ $H$ and
\begin{equation}
x\,\overrightarrow{=}r\left( x\right) \ \text{for all }x\in V_{G}\ \ \text{%
or\ \ \ }r\left( x\right) \,\overrightarrow{=}x\ \ \text{for all }x\in V_{G}.
\label{kk}
\end{equation}%
Then $r$ is a deformation retraction, the digraphs $G$ and $H$ are homotopy
equivalent, and $i$, $r$ are their homotopy inverses.
\end{corollary}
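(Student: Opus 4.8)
The plan is to recognize Corollary~\ref{c3.8} as nothing more than the special case $n=1$ of Proposition~\ref{c3.7}, so that essentially no new work is required beyond exhibiting the appropriate one-step sequence.

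First I would set $f_0=\func{id}_G$ and $f_1=i\circ r$, where $i:H\rightarrow G$ is the inclusion. Both are digraph maps $G\rightarrow G$: the map $f_0$ trivially, and $f_1$ because it is a composition of the digraph maps $r:G\rightarrow H$ and $i:H\rightarrow G$. Condition~(1) of Proposition~\ref{c3.7} then holds by the choice of $f_0$, and condition~(2) holds by the choice of $f_1$.

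Next I would check condition~(3) for the single index $k=1$. Since $f_0(x)=x$ and $f_1(x)=i(r(x))=r(x)$ for every vertex $x$ (identifying $V_H\subset V_G$ as in the notion of sub-digraph), the requirement of condition~(3) reads: either $x\,\overrightarrow{=}\,r(x)$ for all $x\in V_G$, or $r(x)\,\overrightarrow{=}\,x$ for all $x\in V_G$. This is precisely the hypothesis (\ref{kk}). Hence all three hypotheses of Proposition~\ref{c3.7} are satisfied with $n=1$, and applying that proposition yields at once that $r$ is a deformation retraction and that $G$ and $H$ are homotopy equivalent with $i$ and $r$ as homotopy inverses.

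There is no genuine obstacle here, but the one point deserving attention is the quantifier structure of (\ref{kk}). Condition~(3) of Proposition~\ref{c3.7} demands a single uniform direction of the relation $\overrightarrow{=}$ across all vertices within each step, rather than a direction that is allowed to vary from vertex to vertex; this is exactly what forces the hypothesis to be phrased with one global \textquotedblleft for all $x$\textquotedblright\ in each alternative. Since (\ref{kk}) supplies precisely this uniformity, the verification of condition~(3) goes through without complication, and the proof reduces entirely to the citation of Proposition~\ref{c3.7}.
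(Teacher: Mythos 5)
Your proof is correct and takes essentially the same approach as the paper, which itself disposes of the statement by remarking that Corollary \ref{c3.8} is the particular case $n=1$ of Proposition \ref{c3.7}; your instantiation $f_0=\func{id}_G$, $f_1=i\circ r$ and the identification of condition (3) with the hypothesis (\ref{kk}) is exactly that reduction, spelled out.
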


Clearly, Corollary \ref{c3.8} is an important particular case $n=1$ of
Proposition \ref{c3.7}. Note also that the condition (\ref{kk}) is
automatically satisfies for all $x\in V_{H}$, so in applications it remains
to verify it for $v\in V_{G}\setminus V_{H}$.

\begin{corollary}
\label{c3.9}For any digraph $G$ and for any line digraph $I_{n}\in \mathcal{I%
}_{n}\ (n\geq 0)$ we have $G\boxdot I_{n}\simeq G.$
\end{corollary}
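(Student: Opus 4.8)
The plan is to identify $G$ with the slice $G\boxdot\{n\}$, a sub-digraph of $G\boxdot I_{n}$, and to show that $G\boxdot I_{n}$ admits a deformation retraction onto this slice; the conclusion $G\boxdot I_{n}\simeq G$ then follows from Proposition \ref{c3.7}. The case $n=0$ is trivial, since $I_{0}$ is a single vertex and $G\boxdot I_{0}\cong G$, so I assume $n\geq 1$.

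First I would define the candidate retraction $r\colon G\boxdot I_{n}\to G\boxdot\{n\}$ by $r(x,i)=(x,n)$. To see that $r$ is a digraph map, take an edge $(x,i)\to(x',i')$ of $G\boxdot I_{n}$. By the definition of the product, either $x=x'$ and $i\to i'$, in which case $r(x,i)=r(x',i')$, or $x\to x'$ and $i=i'$, in which case $r(x,i)=(x,n)\to(x',n)=r(x',i')$; in both cases $r(x,i)\,\overrightarrow{=}\,r(x',i')$. Clearly $r$ restricts to the identity on $G\boxdot\{n\}$, which is isomorphic to $G$, so $r$ is a retraction.

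To upgrade $r$ to a deformation retraction I would build the sequence of maps required by Proposition \ref{c3.7}, the idea being to collapse the $I_{n}$-coordinate toward the endpoint $n$ one step at a time. For $k=0,1,\dots,n$ define $g_{k}\colon I_{n}\to I_{n}$ by $g_{k}(i)=\max\{i,k\}$, so that $g_{0}=\func{id}$ and $g_{n}$ is the constant map at $n$. A short check shows each $g_{k}$ is a digraph map: on any edge between consecutive vertices $i,i+1$ the two values $g_{k}(i),g_{k}(i+1)$ are either both equal to $k$ (when $i\leq k-1$) or are left unchanged (when $i\geq k$), so in either case the edge relation is respected. Moreover $g_{k-1}$ and $g_{k}$ differ only on the vertices $i\leq k-1$, where they take the values $k-1$ and $k$ respectively; hence $g_{k-1}(i)\,\overrightarrow{=}\,g_{k}(i)$ for all $i$ if the edge of $I_{n}$ between $k-1$ and $k$ is $k-1\to k$, and $g_{k}(i)\,\overrightarrow{=}\,g_{k-1}(i)$ for all $i$ if this edge is $k\to k-1$. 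Thus consecutive $g_{k-1},g_{k}$ are one-step homotopic.

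Finally I would lift these to maps $f_{k}\colon G\boxdot I_{n}\to G\boxdot I_{n}$ by $f_{k}(x,i)=(x,g_{k}(i))$. Since the $G$-coordinate is untouched and each $g_{k}$ is a digraph map, each $f_{k}$ is a digraph map, the product edge relations reducing exactly to the checks already made for $g_{k}$. We have $f_{0}=\func{id}_{G\boxdot I_{n}}$, while $f_{n}$ equals the inclusion $G\boxdot\{n\}\hookrightarrow G\boxdot I_{n}$ composed with $r$, and the uniform relations established for the $g_{k}$ give, for each $k$, either $f_{k-1}(x,i)\,\overrightarrow{=}\,f_{k}(x,i)$ for all $(x,i)$ or the reverse. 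Hence the sequence $\{f_{k}\}_{k=0}^{n}$ satisfies the hypotheses of Proposition \ref{c3.7}, which shows that $r$ is a deformation retraction and that $G\boxdot I_{n}\simeq G\boxdot\{n\}\cong G$. The only point requiring care — the main obstacle — is that the edges of $I_{n}$ may point in arbitrary directions; this is precisely why a single-step retraction (Corollary \ref{c3.8}) need not apply and why the collapse must be performed one vertex at a time, with the direction of each one-step homotopy dictated by the orientation of the corresponding edge of $I_{n}$.
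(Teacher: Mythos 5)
Your proof is correct, and every step checks out: $g_{k}(i)=\max\{i,k\}$ is indeed a digraph map on $I_{n}$ for each $k$, consecutive $g_{k-1},g_{k}$ differ exactly on $\{0,\dots,k-1\}$ where they take the values $k-1$ and $k$, so the orientation of the single edge of $I_{n}$ between $k-1$ and $k$ decides the direction of the one-step homotopy uniformly, and the lifted maps $f_{k}(x,i)=(x,g_{k}(i))$ satisfy all three hypotheses of Proposition \ref{c3.7}. The mechanism is the same as the paper's --- collapse the line coordinate one vertex at a time, with each step's direction dictated by the corresponding edge of $I_{n}$ --- but the packaging differs: the paper instead proves $G\boxdot I_{n}\simeq G\boxdot I_{n-1}$ by a single application of the one-step criterion (Corollary \ref{c3.8}, retracting the top vertex $n$ onto $n-1$) and then chains these homotopy equivalences by induction on $n$, whereas you stay inside the fixed digraph $G\boxdot I_{n}$ and apply the general Proposition \ref{c3.7} once, with an explicit telescoping family of self-maps, to exhibit the full projection onto the slice $G\boxdot\{n\}$ as a single deformation retraction. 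The paper's route makes each verification shorter and more local; yours buys a stronger explicit conclusion --- that the projection $(x,i)\mapsto(x,n)$ itself is a deformation retraction of $G\boxdot I_{n}$ onto a slice --- which is exactly the fact the paper later has to reassemble from its decomposition in the proof of Corollary \ref{c3.10}, so your formulation would in fact streamline that argument. (The only cosmetic difference: the paper's iteration retracts toward the vertex $0$ slice, yours toward the vertex $n$ slice; by symmetry this is immaterial.)
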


\begin{proof}
It suffices to show that $G\boxdot I_{n}\simeq G\boxdot I_{n-1}$ where $%
I_{n-1}$ is obtained from $I_{n}$ by removing the vertex $n$ and the
adjacent edge, and then to argue by induction since $G\boxdot I_{0}=G$.
Define a retraction $r:G\boxdot I_{n}\rightarrow G\boxdot I_{n-1}$ by%
\begin{equation*}
r\left( x,k\right) =\left\{
\begin{array}{ll}
\left( x,k\right) , & k\leq n-1, \\
\left( x,n-1\right) , & k=n.%
\end{array}%
\right.
\end{equation*}%
Let us show that $r$ is an $1$-step deformation retraction, that is, $r$
satisfied (\ref{kk}):%
\begin{equation*}
\left( x,k\right) \,\overrightarrow{=}r\left( x,k\right) \ \text{for all }%
\left( x,k\right) \in G\boxdot I_{n}\ \ \text{or\ \ }r\left( x,k\right) \,%
\overrightarrow{=}\left( x,k\right) \ \ \text{for all }\left( x,k\right) \in
G\boxdot I_{n}
\end{equation*}%
Indeed, for $k\leq n-1$ this is obvious. If $k=n$ then consider two cases.

\begin{enumerate}
\item If $\left( n-1\right) \rightarrow n$ in $I_{n}$ then $\left(
x,n-1\right) \rightarrow \left( x,n\right) $ in $G\boxdot I_{n}$ whence%
\begin{equation*}
r\left( x,k\right) =r\left( x,n\right) =\left( x,n-1\right) \rightarrow
\left( x,n\right) =\left( x,k\right) .
\end{equation*}

\item If $n\rightarrow \left( n-1\right) \ $in $I_{n}$ then $\left(
x,n\right) \rightarrow \left( x,n-1\right) $ in $G\boxdot I_{n}$ whence%
\begin{equation*}
\left( x,k\right) \rightarrow r\left( x,k\right) .
\end{equation*}
\end{enumerate}
\end{proof}

\begin{corollary}
\label{c3.10}Let $G$ be a digraph. Fix some $n\in \mathbb{N}$ and consider
for any $k=0,...,n$ the natural inclusion%
\begin{equation*}
i_{k}\colon G\rightarrow G\boxdot I_{n},\ \ \ \ i_{k}(v)=\left( v,k\right) \
\end{equation*}%
and a natural projection
\begin{equation*}
p\colon G\boxdot I_{n}\rightarrow G,\ \ \ \ p(v,k)=v.
\end{equation*}%
Then the maps $i_{,}p$ induce isomorphism of homology groups.
\end{corollary}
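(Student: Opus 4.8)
The plan is to show that the projection $p$ and the inclusion $i_0$ are homotopy inverses of one another, and then to deduce the statement for all the remaining $i_k$ by observing that they are mutually homotopic. Granting these two facts, Theorem \ref{t3.4}$(ii)$ immediately gives that $p_{\ast}$ and $(i_0)_{\ast}$ are mutually inverse isomorphisms, while Theorem \ref{t3.4}$(i)$ forces $(i_k)_{\ast}=(i_0)_{\ast}$ for every $k$, so that all of the maps $i_0,\dots,i_n,p$ induce isomorphisms of homology.

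First I would dispose of the routine parts. The identity $p\circ i_k=\func{id}_G$ holds on the nose, since $p(i_k(v))=p(v,k)=v$. Next, for each $k$ the single edge of $I_n$ joining the vertices $k$ and $k+1$ produces, in $G\boxdot I_n$, an edge between $(v,k)$ and $(v,k+1)$ with a fixed orientation independent of $v$; hence either $i_k(v)\,\overrightarrow{=}\,i_{k+1}(v)$ for all $v$ or $i_{k+1}(v)\,\overrightarrow{=}\,i_k(v)$ for all $v$, which is exactly condition (\ref{f=g}). Thus $i_k$ and $i_{k+1}$ are one-step homotopic, and by transitivity all the $i_k$ are homotopic.

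The heart of the argument is to prove $i_0\circ p\simeq\func{id}_{G\boxdot I_n}$, i.e. that the cylinder collapses onto its end slice $G\boxdot\{0\}$. For this I would exhibit the explicit sequence of self-maps $g_j\colon G\boxdot I_n\to G\boxdot I_n$, $j=0,\dots,n$, given by $g_j(v,k)=(v,\min(k,n-j))$, so that $g_0=\func{id}_{G\boxdot I_n}$ and $g_n=i_0\circ p$; this is precisely the iterated top-collapse used in Corollary \ref{c3.9}. It then remains to check that each $g_j$ is a genuine digraph map and that consecutive maps $g_j,g_{j+1}$ satisfy (\ref{f=g}), after which the remark following Definition \ref{d3.1} yields $g_0\simeq g_n$, hence $i_0\circ p\simeq\func{id}_{G\boxdot I_n}$.

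The main obstacle, and the only place requiring care, is exactly this verification, because the edges of $I_n$ may point either way. One must confirm that $g_j$ sends edges to edges (or collapses them), which amounts to noting that a product edge either stays within a single level, where $g_j$ acts through the identity on $G$, or runs between two adjacent levels, where the cap by $\min(\cdot,n-j)$ either preserves the edge or identifies its endpoints; and one must confirm that under the passage $g_j\to g_{j+1}$ every vertex above level $n-j-1$ is moved along the \emph{same} $I_n$-edge between levels $n-j-1$ and $n-j$, so that the orientation required in (\ref{f=g}) is uniform in $v$. Once this is in place, the assembly via Theorem \ref{t3.4} is immediate.
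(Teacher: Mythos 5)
Your proof is correct, and while it runs on the same engine as the paper's (the iterated one-step collapse of the top level of the cylinder), it is packaged differently. The paper factors through the intermediate cylinders: $p$ is written as the composition of the retractions $G\boxdot I_{m}\rightarrow G\boxdot I_{m-1}$, each a homotopy equivalence by the proof of Corollary \ref{c3.9}, and $i_{k}$ is decomposed into the natural inclusions $G\boxdot I_{m-1}\rightarrow G\boxdot I_{m}$, so the conclusion comes from composing homotopy equivalences. You instead stay inside $G\boxdot I_{n}$ with self-maps: your chain $g_{j}(v,k)=(v,\min (k,n-j))$ is precisely an application of Proposition \ref{c3.7} with $H=G\boxdot \{0\}$ and $r=i_{0}\circ p$, and your two verifications are sound --- each $g_{j}$ is a digraph map because level-preserving edges map via the identity on $G$ and level-crossing edges are either preserved or collapsed to a point, and consecutive $g_{j},g_{j+1}$ satisfy (\ref{f=g}) with an orientation uniform in $v$, since every moved vertex travels along the single edge of $I_{n}$ between levels $n-j-1$ and $n-j$. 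Your handling of the remaining inclusions is in fact a genuine improvement in precision: the paper's decomposition of $i_{k}$ into the inclusions $G\boxdot I_{m-1}\rightarrow G\boxdot I_{m}$, read literally, produces $i_{0}$, and the case of general $k$ is left implicit there; your observation that $i_{k}$ and $i_{k+1}$ are one-step homotopic (the product edge between $(v,k)$ and $(v,k+1)$ has a fixed orientation independent of $v$), combined with Theorem \ref{t3.4}$(i)$ to get $(i_{k})_{\ast }=(i_{0})_{\ast }$, supplies exactly that missing step. The trade-off: the paper's proof is shorter because it reuses Corollary \ref{c3.9} as a black box, whereas yours is self-contained, exhibits the homotopy $\func{id}_{G\boxdot I_{n}}\simeq i_{0}\circ p$ explicitly, and treats all $k$ uniformly.
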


\begin{proof}
The projection $p$ can be decomposed into composition of retractions $%
G\boxdot I_{m}\rightarrow G\boxdot I_{m-1}$ which are homotopy equivalences
by the proof of Corollary \ref{c3.9}. Therefore, $p$ is also a homotopy
equivalence and hence induces isomorphism of homology groups. The inclusion $%
i_{k}$ can be decomposed into composition of natural inclusions $G\boxdot
I_{m-1}\rightarrow G\boxdot I_{m}$, each of them being homotopy inverse of
the retraction $G\boxdot I_{m}\rightarrow G\boxdot I_{m-1}$, which implies
the claim.
\end{proof}

\begin{example}
\label{e8.4} \RM A digraph $G$ is called a \emph{tree} if the underlying
undirected graph is a tree. We claim that if a digraph $G$ is a connected
tree then $G$ is contractible. Indeed, let $a$ be a pendant vertex of $G$
and let $b$ be another vertex such that $a\rightarrow b$ or $b\leftarrow a$.
Let $G^{\prime }$ be the subgraph of $G$ that is obtained from $G$ by
removing the vertex $a$ with the adjacent edge. Then the map $r:G\rightarrow
G^{\prime }$ defined by $r\left( a\right) =b$ and $r|_{H}=\func{id}$ is by
Corollary \ref{c3.8} a deformation retraction, whence $G\simeq G^{\prime }.$
Since $G^{\prime }$ is also a connected tree, continuing the procedure of
removing of a pendant vertices, we obtain in the end that $G$ is
contractible.
\end{example}

\begin{example}
\label{e3.11} \RM A digraph $G$ is called \emph{\ star-like} (resp. inverse
star like) if there is a vertex $a\in V_{G}$ such that $a\rightarrow x$
(resp. $x\rightarrow a$) for all $x\in V_{G}\setminus \left\{ a\right\} .$
if $G$ is a (inverse) star-like digraph, then the map $r\colon G\rightarrow
\left\{ a\right\} $ is by Corollary \ref{c3.8} a deformation retraction,
whence we obtain $G\simeq \left\{ a\right\} $, that is, $G$ is contractible.
Consequently, all homology groups of $G$ are trivial except for $H_{0}$.

For example, consider a \emph{digraph-simplex} of dimension $n$, which is a
digraph $G$ with the set of vertices $\{0,1,\dots ,n\}$ and the set of edges
given by the condition
\begin{equation*}
i\rightarrow j\ \Longleftrightarrow \ i<j
\end{equation*}%
(a digraph-simplex with $n=3$ is shown on the left panel on Fig. \ref{pic11}%
). Then $G$ is star-like and, hence, $G$ is contractible. In particular, the
triangular digraph from Example \ref{e2.8} is contractible. Another
star-like digraph is shown on the right panel of Fig. \ref{pic11}. \FRAME{%
ftbpFU}{6.3304in}{1.3863in}{0pt}{\Qcb{Star-like digraphs}}{\Qlb{pic11}}{%
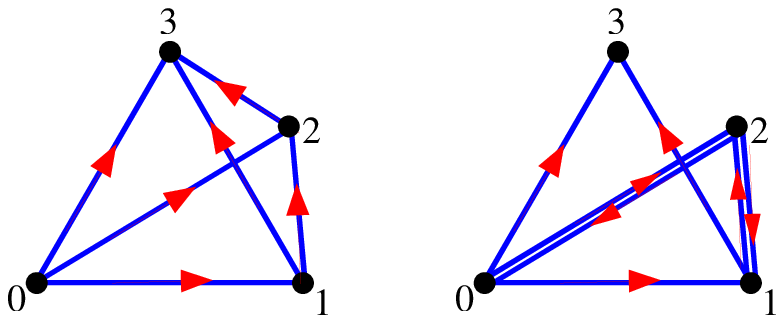}{\special{language "Scientific Word";type
"GRAPHIC";maintain-aspect-ratio TRUE;display "USEDEF";valid_file "F";width
6.3304in;height 1.3863in;depth 0pt;original-width 6.3027in;original-height
1.3586in;cropleft "0";croptop "1";cropright "1";cropbottom "0";filename
'pic11.eps';file-properties "XNPEU";}}
\end{example}

\begin{example}
\label{e3.13} \RM For any $n\geq 1$, consider the $n$-dimensional cube
\begin{equation*}
I^{n}=\underset{n\text{ times}}{\underbrace{I\boxdot I\boxdot \dots \boxdot I%
}}
\end{equation*}%
For example, $I_{2}$ is the square from Example \ref{e2.8} and $I_{3}$ is a $%
3$-cube shown on Fig. \ref{pic9}. By Corollary \ref{c3.9} we have $%
I^{k}\simeq I^{k-1}$, whence we obtain that, for all $n$, $I^{n}\simeq I$,
which implies that $I^{n}$ is contractible. In particular, this applies to a
square digraph from Example \ref{e2.8}. Consequently, the all homology
groups of $I^{n}$ are trivial except for $H_{0}$.
\end{example}

\begin{example}
\RM\label{ExSnSm}Let $S_{n}$ be a cycle digraph from Example \ref{e2.8}. If $%
S_{n}$ is the triangle or square as in (\ref{ts}) then $S_{n}$ is
contractible as was shown in Examples \ref{e3.11} and \ref{e3.13},
respectively. If $S_{n}$ is neither triangle nor square then by Example \ref%
{e2.8} $H_{1}(S_{n},\mathbb{K})\cong \mathbb{K}$ and, hence, $S_{n}$ is not
contractible. In particular, this is always the case when $n\geq 5$. Here
are other examples of non-contractible cycles with $n=3,4$:%
\begin{equation*}
\text{ }%
\begin{array}{c}
_{_{\nearrow }}\overset{1}{\bullet }_{_{\searrow }} \\
^{0}\bullet \ \longleftarrow \ \bullet ^{2}%
\end{array}%
\ \ \text{and \ }%
\begin{array}{ccc}
^{1}\bullet & \longrightarrow & \bullet ^{2} \\
\ \uparrow &  & \downarrow \  \\
^{0}\bullet & \longleftarrow & \bullet ^{3}%
\end{array}%
,
\end{equation*}

Let us show that two cycles $S_{n}$ and $S_{m}$ with $n\neq m$ are not
homotopy equivalent, except for the case when one of them is a triangle and
the other is a square. Assume that $S_{n}$ and $S_{m}$ with $n<m$ are
homotopy equivalent. Then by Theorem \ref{t3.4} there is a digraph map $%
f:S_{n}\rightarrow S_{m}$ such that $f_{\ast }:H_{1}\left( S_{n}\right)
\rightarrow H_{1}\left( S_{m}\right) $ is an isomorphism. If homology groups
$H_{1}\left( S_{n}\right) $ and $H_{1}\left( S_{m}\right) $ are not
isomorphic then we are done. If they are isomorphic, then they are
isomorphic to $\mathbb{K}$. Let $\varpi _{n}\in \Omega _{1}\left(
S_{n}\right) $ be the generator of closed $1$-paths on $S_{n}$ and $\varpi
_{m}\in \Omega _{1}\left( S_{m}\right) $ be the generator of closed $1$%
-paths on $S_{n}$, as in (\ref{om}). Then $\left[ \varpi _{n}\right] $
generates $H_{1}\left( S_{n}\right) $, $\left[ \varpi _{m}\right] $
generates $H_{1}\left( S_{m}\right) $, and we should have
\begin{equation*}
f_{\ast }\left( \left[ \varpi _{n}\right] \right) =k\left[ \varpi _{m}\right]
\end{equation*}%
for some non-zero constant $k\in \mathbb{K}$. Consequently, we obtain
\begin{equation*}
f_{\ast }\left( \varpi _{n}\right) =k\varpi _{m},
\end{equation*}%
which is impossible because $f$ cannot be surjective by $n<m$, whereas $%
\varpi _{m}$ uses all the vertices of $S_{m}$.
\end{example}

\begin{example}
\RM Consider the digraph $G$ as on Fig. \ref{pic20}.\FRAME{ftbpF}{5.6611in}{%
1.7487in}{0pt}{}{\Qlb{pic20}}{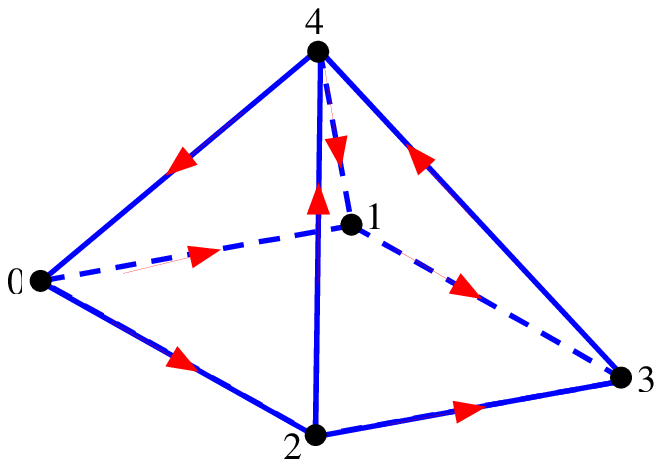}{\special{language "Scientific
Word";type "GRAPHIC";maintain-aspect-ratio TRUE;display "USEDEF";valid_file
"F";width 5.6611in;height 1.7487in;depth 0pt;original-width
6.3027in;original-height 1.9268in;cropleft "0";croptop "1";cropright
"1";cropbottom "0";filename 'pic20.eps';file-properties "XNPEU";}}

Consider also its sub-digraph $H$ with the vertex set $V_{H}=\left\{
1,3,4\right\} $ and a retraction $r:G\rightarrow H$ given by $r\left(
0\right) =1,\ r\left( 2\right) =3$ and $r|_{H}=\func{id}$. By Corollary \ref%
{c3.8}, $r$ is a deformation retraction, whence $G\simeq H$. Consequently,
we obtain $H_{1}\left( G,\mathbb{K}\right) \cong H_{1}\left( H,\mathbb{K}%
\right) \cong \mathbb{K}$ and $H_{p}\left( G,\mathbb{K}\right) =\left\{
0\right\} $ for $p\geq 2$.
\end{example}

\begin{example}
\RM Let $a$ be a vertex in a digraph $G$ and let $b_{0},b_{1},...,b_{n}$ be
all the neighboring vertices of $a$ in $G$. Assume that the following
condition is satisfied:%
\begin{equation}
\forall i=1,...,n\ \ \ \ a\rightarrow b_{i}\Rightarrow b_{0}\rightarrow
b_{i}\ \ \text{and\ \ \ }a\leftarrow b_{i}\Rightarrow b_{0}\leftarrow b_{i}.
\label{abi}
\end{equation}%
Denote by $H$ the digraph that is obtained from $G$ by removing a vertex $a$
with all adjacent edges. The map $r:G\rightarrow H$ given by $r\left(
a\right) =b_{0}$ and $r|_{H}=\func{id}$ is by Corollary \ref{c3.8} a
deformation retraction, whence we obtain that $G\simeq H.$ Consequently, all
homology groups of $G$ and $H$ are the same. This is very similar to the
results about transformations of simplicial complexes in the simple homotopy
theory (see, for example, \cite{Cohen}).

In particular, (\ref{abi}) is satisfied if $a\rightarrow b_{i}\ $and $%
b_{0}\rightarrow b_{i}$ for all $i\geq 1$ or $a\leftarrow b_{i}\ $and $%
b_{0}\leftarrow b_{i}$ for all $i\geq 1$. Two examples when (\ref{abi}) is
satisfied are shown in the following diagram:%
\begin{equation*}
\fbox{$%
\begin{array}{cc}
&  \\
& \nearrow \\
a~\bullet & \longrightarrow \\
& \searrow \\
&
\end{array}%
$\fbox{$%
\begin{array}{c}
\bullet ~b_{1} \\
\uparrow ~~\  \\
\bullet ~b_{0} \\
\downarrow ~~\  \\
\bullet ~b_{1}%
\end{array}%
\cdots $\ \ \ $H$}\ \ \ $G$}\ \ \ \ \fbox{$%
\begin{array}{cc}
&  \\
& \nearrow \\
a~\bullet & \longleftarrow \\
& \nwarrow \\
&
\end{array}%
$\fbox{$%
\begin{array}{c}
\bullet ~b_{1} \\
\uparrow ~~\  \\
\bullet ~b_{0} \\
\uparrow ~~\  \\
\bullet ~b_{1}%
\end{array}%
\cdots $\ \ \ $H$}\ \ \ $G$}
\end{equation*}%
On the contrary, the digraph $G$ on following diagram%
\begin{equation*}
\begin{array}{ccc}
&  & \bullet \ b_{1} \\
& \nearrow & \downarrow \ \ \  \\
a~\bullet & \longleftarrow & \bullet \ b_{0}%
\end{array}%
\end{equation*}%
does not satisfy (\ref{abi}). Moreover, this digraph is not homotopy
equivalent to $H=\left( ^{0}\bullet \rightarrow \bullet ^{1}\right) $ since $%
G$ and $H$ have different homology group $H_{1}$ (cf. Example \ref{e2.8}).

For example, the digraph on the left panel of Fig. \ref{pic15b} is
contractible as one can successively remove the vertices $5,4,3,2$ each time
satisfying (\ref{abi}). \FRAME{ftbpFU}{5.8306in}{1.6847in}{0pt}{\Qcb{The
left digraph is contractible while the right one is not. }}{\Qlb{pic15b}}{%
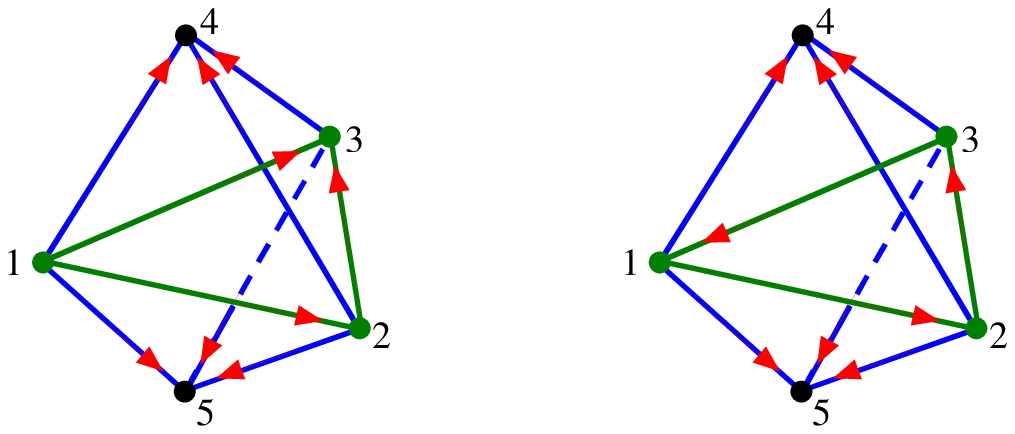}{\special{language "Scientific Word";type
"GRAPHIC";maintain-aspect-ratio TRUE;display "USEDEF";valid_file "F";width
5.8306in;height 1.6847in;depth 0pt;original-width 6.3027in;original-height
1.8023in;cropleft "0";croptop "1";cropright "1";cropbottom "0";filename
'pic15b.eps';file-properties "XNPEU";}}

The digraph on the right panel of Fig. \ref{pic15b} is different from the
left one only by the direction of the edge between $1$ and $3$, but it is
not contractible as its $H_{2}$ group is non-trivial by Example \ref{e2.7}.

Consider one more example: the digraph $G$ on Fig. \ref{pic6a}. \FRAME{ftbhFU%
}{5.0704in}{2.2952in}{0pt}{\Qcb{{}Graph $G$}}{\Qlb{pic6a}}{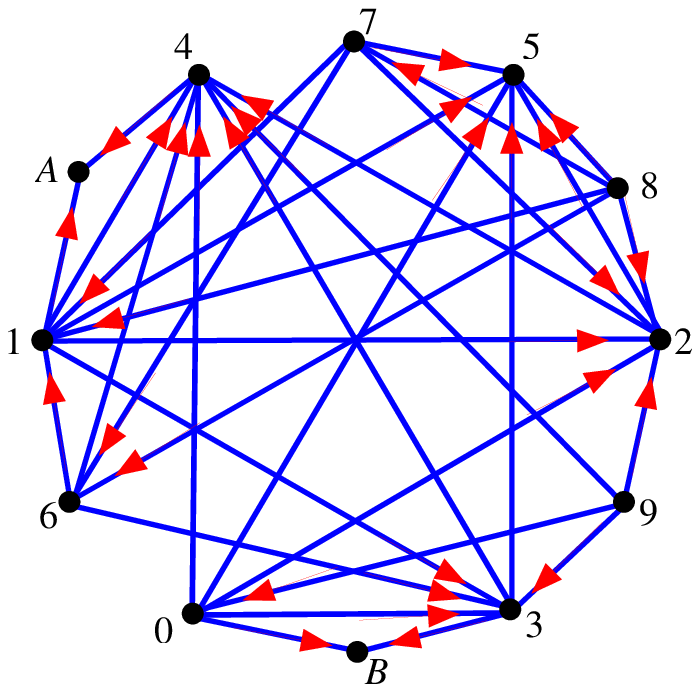}{%
\special{language "Scientific Word";type "GRAPHIC";maintain-aspect-ratio
TRUE;display "USEDEF";valid_file "F";width 5.0704in;height 2.2952in;depth
0pt;original-width 6.3027in;original-height 2.6671in;cropleft "0";croptop
"1";cropright "1";cropbottom "0";filename 'pic6a.eps';file-properties
"XNPEU";}}

Removing successively the vertices $A,B,8,9,6,7$, which each time satisfy (%
\ref{abi}), we obtain a digraph $H$ with $V_{H}=\left\{ 0,1,2,3,4,5\right\} $
that is homotopy equivalent to $G$ and, in particular, has the same
homologies as $G$. The digraph $H$ is shown in two ways on Fig. \ref{pic6b}.
Clearly, the second representation of this graph is reminiscent of an
octahedron.\FRAME{ftbhFU}{3.8086in}{1.4944in}{0pt}{\Qcb{Two representations
of the digraph $H$}}{\Qlb{pic6b}}{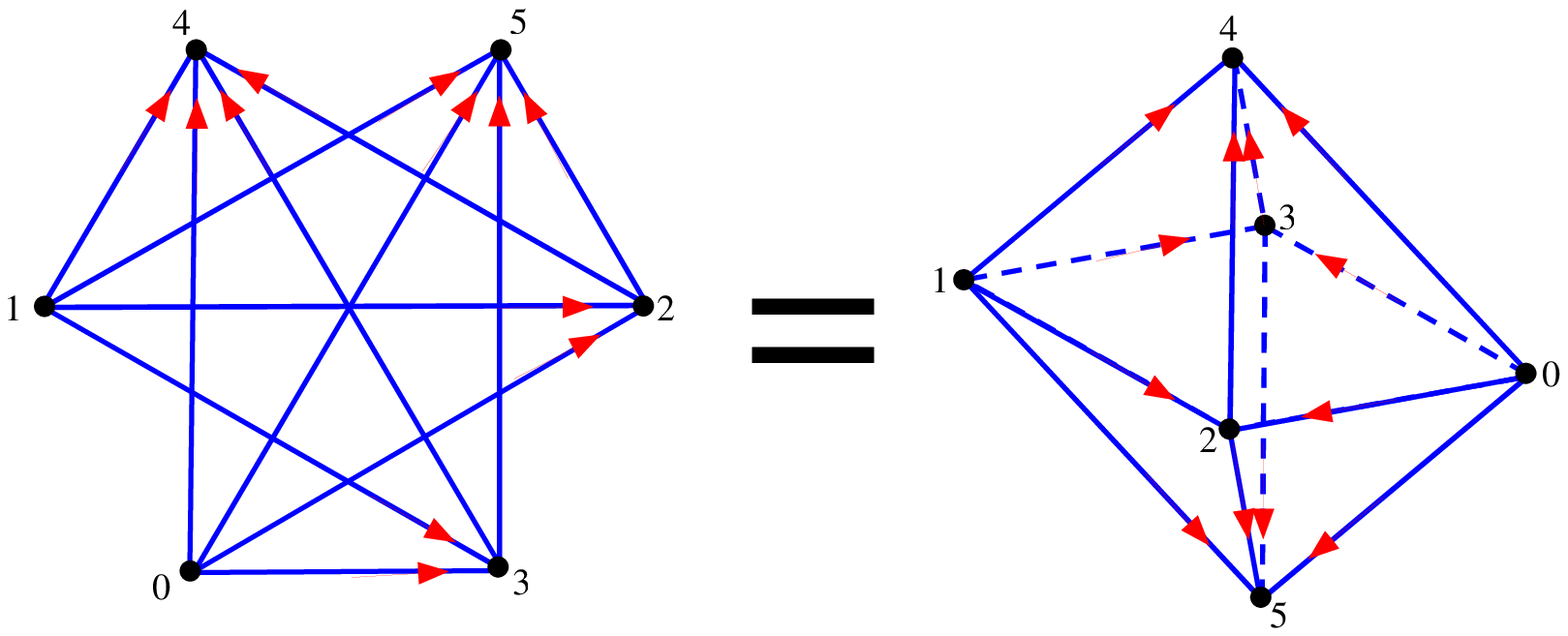}{\special{language "Scientific
Word";type "GRAPHIC";maintain-aspect-ratio TRUE;display "USEDEF";valid_file
"F";width 3.8086in;height 1.4944in;depth 0pt;original-width
6.3027in;original-height 2.5685in;cropleft "0";croptop "1";cropright
"1";cropbottom "0";filename 'pic6b.eps';file-properties "XNPEU";}}

It is possible to show that $H_{p}\left( H,\mathbb{K}\right) =\left\{
0\right\} $ for $p=1$ and $p>2$ while $H_{2}\left( H,\mathbb{K}\right) \cong
\mathbb{K}.$ It follows that the same is true for the homology groups of $G$%
. Furthermore, it is possible to show that $H_{2}\left( G,\mathbb{K}\right) $
is generated by the following $2$-path%
\begin{equation*}
\omega =e_{024}-e_{025}-e_{034}+e_{035}-e_{124}+e_{125}+e_{134}-e_{135},
\end{equation*}%
that determines a $2$-dimensional \textquotedblleft hole\textquotedblright\
in $G$ given by the octahedron $H$. Note that on Fig. \ref{pic6a} this
octahedron is hardy visible.
\end{example}

\subsection{Cylinder of a map}

Let us give some useful examples of homotopy equivalent digraphs.

\begin{definition}
\label{d3.14} \RM Let $G=\left( V_{G},E_{G}\right) $ and $H=\left(
V_{H},E_{H}\right) $ be two digraphs and $f$ be a digraph map from $G$ to $H$%
. The \emph{cylinder} $\func{C}_{f}$ of $f$ is the digraph with the set of
vertices $V_{\func{C}_{f}}=V_{G}\sqcup V_{H}$ and with the set of edges $E_{%
\func{C}_{f}}$ that consists of all the edges from $E_{G}$ and $E_{H}$ as
well as of the edges of the form $x\rightarrow f\left( x\right) $ for all $%
x\in V_{G}.$

The inverse cylinder $\func{C}_{f}^{-}$ is defined in the same way except
that the edge $x\rightarrow f\left( x\right) $ is replaced by $f\left(
x\right) \rightarrow x$.
\end{definition}

For example, for $f=\func{id}_{G}$ we have $\func{C}_{f}=G\boxdot I$ where $%
I=\left( ^{0}\bullet \longrightarrow \bullet ^{1}\right) $ and $\func{C}%
_{f}^{-}=G\boxdot I^{-}$ where $I^{-}=\left( ^{0}\bullet \longleftarrow
\bullet ^{1}\right) .$

\begin{example}
\RM Let $G$ be the digraph with vertices $\left\{ 0,1,2,3,4,5\right\} $ and $%
H$ is be the digraph with vertices $\left\{ a,b,c\right\} $ as on Fig. \ref%
{pic16a}. Consider the digraph map $f:G\rightarrow H$ given by $f\left(
0\right) =f\left( 1\right) =a,$ $f\left( 2\right) =f\left( 3\right) =b$ and $%
f\left( 4\right) =f\left( 5\right) =c$. The cylinder $\func{C}_{f}$ of $f$
is shown on Fig. \ref{pic16a}.\FRAME{ftbpFU}{5.0704in}{1.5696in}{0pt}{\Qcb{%
The cylinder of the map}}{\Qlb{pic16a}}{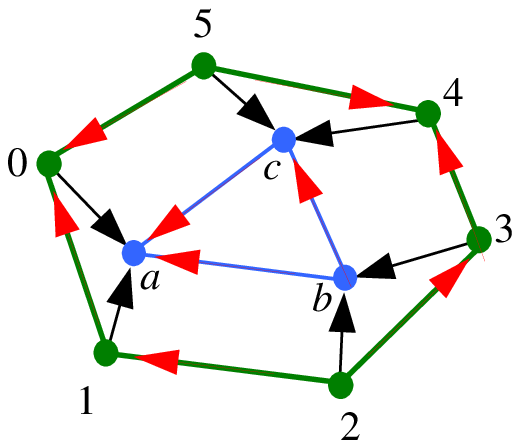}{\special{language
"Scientific Word";type "GRAPHIC";maintain-aspect-ratio TRUE;display
"USEDEF";valid_file "F";width 5.0704in;height 1.5696in;depth
0pt;original-width 6.3027in;original-height 1.0101in;cropleft "0";croptop
"1";cropright "1";cropbottom "0";filename 'pic16a.eps';file-properties
"XNPEU";}}
\end{example}

\begin{proposition}
\label{p3.15} Let $f$ be a digraph map from $G$ to $H$. Then we have the
following homotopy equivalences of the digraphs
\begin{equation*}
\func{C}_{f}\simeq H\simeq \func{C}_{f}^{-}.
\end{equation*}
\end{proposition}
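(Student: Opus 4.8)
The plan is to show that $H$ is a deformation retract of the cylinder $\func{C}_f$, and then handle $\func{C}_f^-$ by the same argument with the edge directions reversed. Recall that the vertex set of $\func{C}_f$ is $V_G \sqcup V_H$, with all the original edges of $G$ and $H$ present, plus an edge $x \rightarrow f(x)$ for every $x \in V_G$. The natural candidate for a retraction $r \colon \func{C}_f \rightarrow H$ is the map that fixes $H$ pointwise and sends each $x \in V_G$ to $f(x) \in V_H$; that is, $r|_{V_H} = \func{id}_H$ and $r(x) = f(x)$ for $x \in V_G$.

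First I would verify that this $r$ is a genuine digraph map. There are three kinds of edges to check: edges inside $H$ (preserved since $r$ is the identity there), the connecting edges $x \rightarrow f(x)$ (which map to $f(x)\,\overrightarrow{=}\,f(x)$, i.e. a collapsed vertex, hence allowed), and edges $x \rightarrow y$ coming from $E_G$. For the last type, since $f$ is a digraph map we have $f(x)\,\overrightarrow{=}\,f(y)$ in $H$, so $r(x) = f(x)\,\overrightarrow{=}\,f(y) = r(y)$, as required. Thus $r$ is a retraction of $\func{C}_f$ onto $H$.

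The key step is to check that $r$ satisfies the hypothesis (\ref{kk}) of Corollary \ref{c3.8}, so that it is automatically a deformation retraction. For $x \in V_H$ the condition holds trivially. For $x \in V_G$, the cylinder contains by construction the edge $x \rightarrow f(x) = r(x)$, so $x\,\overrightarrow{=}\,r(x)$ holds for every such $x$. Hence the first alternative of (\ref{kk}), namely $x\,\overrightarrow{=}\,r(x)$ for all $x \in V_{\func{C}_f}$, is satisfied, and Corollary \ref{c3.8} immediately gives that $r$ is a deformation retraction and that $\func{C}_f \simeq H$. For the inverse cylinder $\func{C}_f^-$ the connecting edge is $f(x) \rightarrow x$, i.e. $r(x) \rightarrow x$, so now the second alternative $r(x)\,\overrightarrow{=}\,x$ of (\ref{kk}) holds for all $x$; the verification that $r$ is a digraph map is identical, and Corollary \ref{c3.8} yields $\func{C}_f^- \simeq H$.

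I do not expect a genuine obstacle here: the entire argument is a direct application of Corollary \ref{c3.8}, and the only thing requiring care is confirming that $r$ respects the edges of $E_G$, which reduces to the defining property of $f$ as a digraph map. The mild subtlety worth stating explicitly is \emph{why} the connecting edges are oriented so as to make (\ref{kk}) hold with the correct alternative in each of the two cases — this is precisely the point at which $\func{C}_f$ and $\func{C}_f^-$ differ, and it is what makes both retractions deformation retractions rather than only one of them.
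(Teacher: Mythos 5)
Your proposal is correct and is essentially the paper's own proof: the paper defines the same projection $p\colon \func{C}_{f}\rightarrow H$ (identity on $V_{H}$, $x\mapsto f(x)$ on $V_{G}$), observes it is a one-step deformation retraction, and invokes Corollary \ref{c3.8}, treating $\func{C}_{f}^{-}$ as similar. You merely make explicit the two verifications the paper labels ``clearly'' --- that $r$ is a digraph map and that the connecting edges supply the correct alternative of (\ref{kk}) in each case --- which is a faithful elaboration, not a different route.
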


\begin{proof}
The projection $p:\func{C}_{f}\rightarrow H$ defined by%
\begin{equation*}
p\left( x\right) =\left\{
\begin{array}{ll}
x, & x\in V_{H}, \\
f\left( x\right) , & x\in V_{G},%
\end{array}%
\right.
\end{equation*}%
is clearly an $1$-step deformation retraction of $\func{C}_{f}$ onto $H$,
whence it follows by Corollary \ref{c3.8} that $\func{C}_{f}\simeq H$. The
case of the inverse cylinder $\func{C}_{f}^{-}$ is similar.
\end{proof}

\section{Homotopy groups of digraphs}

\label{S4}\setcounter{equation}{0}

In this Section we define homotopy groups of digraphs and describe theirs
basic properties. For that, we introduce the concept of path-map in a
digraph $G$, and then define a fundamental group of $G$. Then the higher
homotopy group can be defined inductively as the fundamental group of the
corresponding iterated loop-digraph.

A based digraph $G^{\ast }$ is a digraph $G$ with a fixed base vertex $\ast
\in V_{G}.$ A based digraph map $f:G^{\ast }\rightarrow H^{\ast }$ is a
digraph map $f:G\rightarrow H$ such that $f\left( \ast \right) =\ast $. A
category of based digraphs will be denoted by $\mathcal{D}^{\ast }$.

A homotopy between two based digraph maps $f,g:G^{\ast }\rightarrow H^{\ast
} $ is defined as in Definition \ref{d3.1} with additional requirement that $%
F|_{\left\{ \ast \right\} \boxdot I_{n}}=\ast .$

\subsection{Construction of $\protect\pi _{0}$}

Let $G^{\ast }$ be a based digraph, and $V_{2}^{\ast }=\{0,1\}$ be the based
digraph consisting of two vertices, no edges and with the base vertex $%
0=\ast $. Let $Hom(V_{2}^{\ast },G^{\ast })$ be the set of based digraph
maps from $V_{2}^{\ast }$ to $G^{\ast }$. Note that the set of such maps is
in one to one correspondence with the set of vertices of the digraph $G$.

\begin{definition}
\label{d4.5} \RM We say that two digraph maps $\phi ,\psi \in
Hom(V_{2}^{\ast },G^{\ast })$ are \emph{equivalent} and write $\phi \simeq
\psi $ if there exists $I_{n}\in \mathcal{I}$ and a digraph map
\begin{equation*}
f\colon I_{n}\rightarrow G,
\end{equation*}%
such that $f(0)=\phi \left( 1\right) $ and $f(n)=\psi \left( 1\right) $. The
relation $\simeq $ is evidently an equivalence relation, and we denote by $%
[\phi ]$ the equivalence class of the element $\phi $, and by $\pi
_{0}(G^{\ast })$ the set of classes of equivalence with the base point $\ast
$ given by a class of equivalence of the trivial map $V_{2}\rightarrow \ast
\in G.$
\end{definition}

The set $\pi _{0}(G^{\ast })$ coincides with the set of connected components
of the digraph $G$. In particular, the digraph $G^{\ast }$ connected if $\pi
_{0}(G^{\ast })=\ast $.

\begin{proposition}
\label{p4.6}Any based digraph map $f\colon G^{\ast }\rightarrow H^{\ast }$
induces a map
\begin{equation*}
\pi _{0}(f)\colon \pi _{0}(G^{\ast })\rightarrow \pi _{0}(H^{\ast })
\end{equation*}%
of based sets. The homotopic maps induce the same map of based sets. We have
a functor from the category $\mathcal{D}^{\ast }$ of digraphs to the
category based sets.
\end{proposition}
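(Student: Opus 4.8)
The plan is to define the induced map by composition and then verify the three assertions in turn. For a based digraph map $f\colon G^{\ast}\to H^{\ast}$ and any $\phi\in Hom(V_2^{\ast},G^{\ast})$, the composite $f\circ\phi$ is again a based digraph map $V_2^{\ast}\to H^{\ast}$, since $\mathcal{D}^{\ast}$ is a category; so I would set $\pi_0(f)([\phi]):=[f\circ\phi]$. To see that this descends to equivalence classes, suppose $\phi\simeq\psi$, witnessed by a digraph map $h\colon I_n\to G$ with $h(0)=\phi(1)$ and $h(n)=\psi(1)$. Then $f\circ h\colon I_n\to H$ is a digraph map with $(f\circ h)(0)=(f\circ\phi)(1)$ and $(f\circ h)(n)=(f\circ\psi)(1)$, so $f\circ\phi\simeq f\circ\psi$ and $\pi_0(f)$ is well defined. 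Since $f(\ast)=\ast$, the trivial map $V_2\to\ast\in G$ is carried to the trivial map $V_2\to\ast\in H$, so $\pi_0(f)$ sends base point to base point and is a map of based sets.

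For the second assertion I would use the homotopy directly. Let $f\simeq g$ be witnessed by a homotopy $F\colon G\boxdot I_n\to H$ with $F|_{G\boxdot\{0\}}=f$ and $F|_{G\boxdot\{n\}}=g$. Given $\phi$ with $\phi(1)=v$, I would restrict $F$ to the fiber over $v$ by setting $h(k)=F(v,k)$, giving a map $h\colon I_n\to H$. The key point is that $h$ is a digraph map: every edge of $I_n$ joining $k$ and $k+1$ lifts, by Definition \ref{d2.3}, to the edge $(v,k)\to(v,k+1)$ (or its reverse) in $G\boxdot I_n$, and since $F$ is a digraph map we obtain $h(k)\,\overrightarrow{=}\,h(k+1)$ (or the reverse). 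As $h(0)=f(v)=(f\circ\phi)(1)$ and $h(n)=g(v)=(g\circ\phi)(1)$, this shows $f\circ\phi\simeq g\circ\phi$, hence $[f\circ\phi]=[g\circ\phi]$ for every $\phi$, and therefore $\pi_0(f)=\pi_0(g)$.

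Functoriality I expect to be purely formal, following from associativity of composition of digraph maps: $\pi_0(\func{id}_{G^{\ast}})([\phi])=[\func{id}\circ\phi]=[\phi]$, and for composable based maps $f\colon G^{\ast}\to H^{\ast}$, $g\colon H^{\ast}\to K^{\ast}$ we have $\pi_0(g\circ f)([\phi])=[g\circ f\circ\phi]=\pi_0(g)(\pi_0(f)([\phi]))$. Thus $\pi_0$ is a functor from $\mathcal{D}^{\ast}$ to based sets. The only genuinely non-formal step is verifying that the fiber restriction $k\mapsto F(v,k)$ is a digraph map $I_n\to H$; this is precisely where the Cartesian-product structure of $G\boxdot I_n$ and the definition of homotopy must be combined, while everything else reduces to bookkeeping with composition.
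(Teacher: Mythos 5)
Your proposal is correct and takes the same route as the paper: both define $\pi _{0}(f)([\phi ])=[f\circ \phi ]$, the paper leaving well-definedness and homotopy invariance as an ``easy exercise.'' You supply exactly the right details for that exercise --- composing the connecting path-map $h\colon I_{n}\rightarrow G$ with $f$ for well-definedness, and restricting the homotopy $F$ to the fiber $k\mapsto F(v,k)$ (a digraph map by the Cartesian-product structure of $G\boxdot I_{n}$) for $\pi _{0}(f)=\pi _{0}(g)$ --- so nothing is missing.
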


\begin{proof}
Let $x=[\phi ]\in \pi _{0}(G^{\ast })$ be presented by a digraph map $\phi
\colon V_{2}^{\ast }\rightarrow G^{\ast }$ we put $y=[\pi
_{0}(f)](x)=[f\circ \phi ]\in \pi _{0}(H^{\ast })$. It is an easy exercise
to check that this map $\pi _{0}(f)$ is well defined and for the homotopic
maps $f\simeq g\colon G^{\ast }\rightarrow H^{\ast }$ we have $\pi
_{0}(f)=\pi _{0}(g)$.
\end{proof}

\subsection{$C$-homotopy and $\protect\pi _{1}$}

For any line digraph $I_{n}\in \mathcal{I}_{n}$, a based digraph $%
I_{n}^{\ast }$ will always have the base point $0$.

\begin{definition}
\RM A\textbf{\ }\emph{path-map }in a digraph $G$ is any digraph map $\phi
:I_{n}\rightarrow G$, where $I_{n}\in \mathcal{I}_{n}$. A \emph{based
path-map} on a based digraph $G^{\ast }$ is a based digraph map $\phi
:I_{n}^{\ast }\rightarrow G^{\ast }$, that is, a digraph map such that $\phi
\left( 0\right) =\ast $. A \emph{loop} on $G^{\ast }$ is a based path-map $%
\phi :I_{n}^{\ast }\rightarrow G^{\ast }$ such that $\phi \left( n\right)
=\ast $.
\end{definition}

Note that the image of a path-map is not necessary an allowed path of the
digraph $G$.

\begin{definition}
\label{d4.2}\RM A digraph map $h:I_{n}\rightarrow I_{m}\ $is\ called \emph{%
shrinking} if $\ h\left( 0\right) =0$, $h(n)=m$, and $h\left( i\right) \leq
h\left( j\right) $ whenever $i\leq j$ (that is, if $h$ as a function from $%
\left\{ 0,...,n\right\} $ to $\left\{ 0,...,m\right\} $ is monotone
increasing).
\end{definition}

Any shrinking $h:I_{n}\rightarrow I_{m}$ is by definition a based digraph
map. Moreover, $h$ is surjective and the preimage of any edge of $I_{m}$
consists of exactly one edge of $I_{n}$. Furthermore, we have necessarily $%
m\leq n$, and if $n=m$ then $h$ is a bijection.

\begin{definition}
\label{d4.3}\RM Consider two based path-maps
\begin{equation*}
\phi \colon I_{n}^{\ast }\rightarrow G^{\ast }\ \ \text{and}\ \ \psi \colon
I_{m}^{\ast }\rightarrow G^{\ast }.
\end{equation*}%
An \emph{one-step direct }$C$\emph{-homotopy }from $\phi $ to $\psi $ is
given by a shrinking map $h:I_{n}\rightarrow I_{m}$ such that the map $F:V_{%
\func{C}_{h}}\rightarrow V_{G}$ given by%
\begin{equation}
F|_{I_{n}}=\phi \ \ \ \text{and\ \ \ }F|_{I_{m}}=\psi ,  \label{Ffipsi}
\end{equation}%
is a digraph map from $\func{C}_{h}$ to $G$. If the same is true with $\func{%
C}_{h}$ replaced everywhere by $\func{C}_{h}^{-}$ then we refer to an
one-step \emph{inverse} $C$-homotopy.
\end{definition}

\begin{remark}
\RM\label{Remfipsi}The requirement that $F$ is a digraph map is equivalent
to the condition%
\begin{equation}
\phi \left( i\right) \overrightarrow{=}\psi \left( h\left( i\right) \right)
\ \ \text{for all }i\in I_{n}.  \label{fipsih}
\end{equation}%
In turn, (\ref{fipsih}) implies that the digraph maps $\phi $ and $\psi
\circ h$ (acting from $I_{n}$ to $G$) satisfy (\ref{f=g}), which yields $%
\phi \simeq \psi \circ h$.

If $n=m$ then $h=\func{id}_{I_{n}}$ and an one-step $C$-homotopy is a
homotopy.
\end{remark}

\begin{example}
\RM An example of one-step direct $C$-homotopy is shown in Fig. \ref{pic1}.%
\FRAME{ftbpFU}{4.5697in}{1.6094in}{0pt}{\Qcb{The loops $\protect\phi %
:I_{3}\rightarrow G$ and and $\protect\psi :I_{5}\rightarrow G$ are $C$%
-homotopic. Note that $\protect\phi \left( 0\right) =\protect\phi \left(
5\right) =\ast =\protect\psi \left( 0\right) =\protect\psi \left( 3\right) .$%
}}{\Qlb{pic1}}{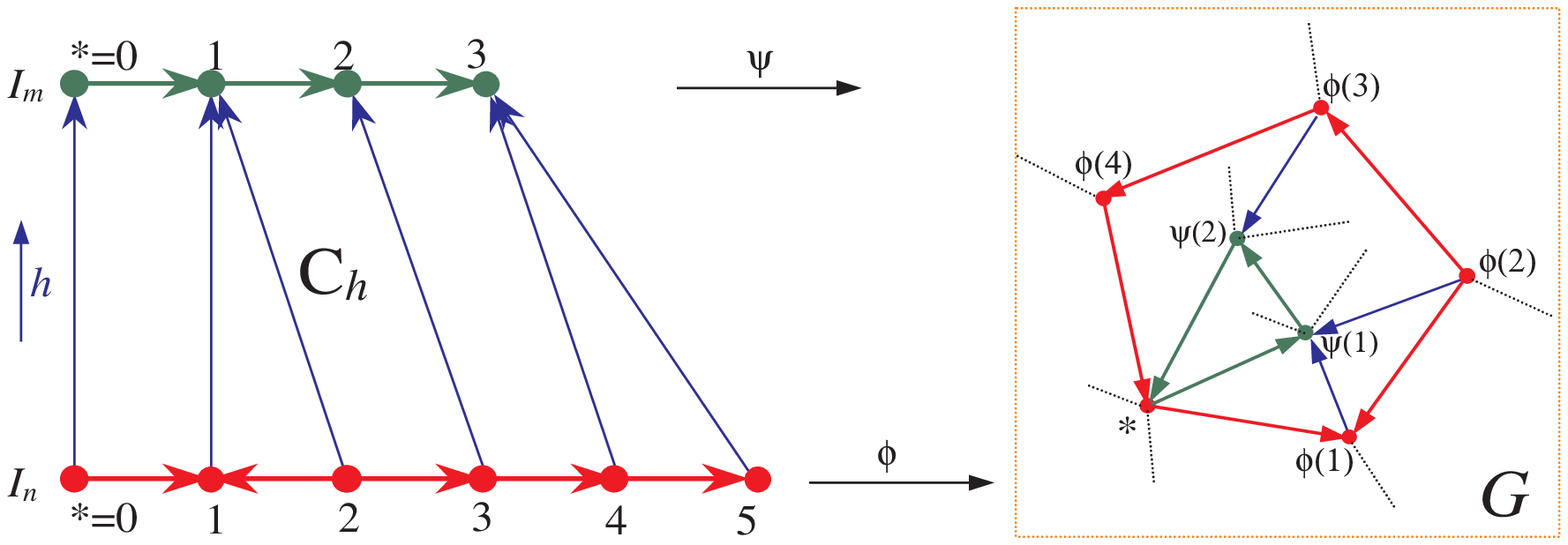}{\special{language "Scientific Word";type
"GRAPHIC";maintain-aspect-ratio TRUE;display "USEDEF";valid_file "F";width
4.5697in;height 1.6094in;depth 0pt;original-width 7.0681in;original-height
2.2485in;cropleft "0";croptop "1";cropright "1";cropbottom "0";filename
'pic1.eps';file-properties "XNPEU";}}

Note that the images of the loops $\phi $ and $\psi $ on Fig. \ref{pic1} are
not homotopic as digraphs because they are cycles of different lengths $5$
and $3$ (see Example \ref{ExSnSm}). Nevertheless, the loops $\phi $ and $%
\psi $ are $C$-homotopic.
\end{example}

\begin{definition}
\label{d4.4} \RM For a based digraph $G^{\ast }$ define a path-digraph $PG$
as follows. The vertices of $PG$ are all the based path-maps in $G^{\ast }$,
and the edges of $PG$ are defined by the following rule: $\phi \rightarrow
\psi $ in $PG$ if $\phi \neq \psi $ \label{rem: is this correct that fi not
=psi}and there is an one-step direct $C$-homotopy from $\phi $ to $\psi $ or
an one-step inverse $C$-homotopy from $\psi $ to $\phi $.

Then define a \emph{based path-digraph } $PG^{\ast }$ by choosing in $PG$
the base vertex $I_{0}^{\ast }\rightarrow G^{\ast }$, which will also be
denoted by $\ast $. \label{d4.7}Define a \emph{based loop-digraph} $LG^{\ast
}$ as a sub-digraph of $PG^{\ast }$ whose set of vertices consists of all
the loops of $G^{\ast }.$
\end{definition}

Any map $f\colon G^{\ast }\rightarrow H^{\ast }$ induces a based map of
path-digraphs
\begin{equation*}
Pf:PG^{\ast }\rightarrow PH^{\ast },\ \ \ \ \left( Pf\right) \left( \psi
\right) =f\circ \psi ,
\end{equation*}%
where $\psi :I_{n}^{\ast }\rightarrow G^{\ast }$ is a based path-map. Hence,
$P$ is a functor from the category $\mathcal{D^{\ast }}$ to itself.
Similarly we have a map of based loop digraphs
\begin{equation}
Lf:LG^{\ast }\rightarrow LH^{\ast },\ \ \ \ \ (Lf)(\psi )=f\circ \psi ,
\label{Lf}
\end{equation}%
where $\psi :I_{n}^{\ast }\rightarrow G^{\ast }$ is a loop. Hence, $L$ is a
functor from the category $\mathcal{D}^{\ast }$ to itself.

\begin{definition}
\RM We call two based path-maps $\phi ,\psi \in PG$ \emph{\ }$C$-\emph{%
homotopic} and write $\phi \overset{C}{\simeq }\psi $ if there exists a
finite sequence $\left\{ \phi _{k}\right\} _{k=0}^{m}$ of based path-maps in
$PG$ such that $\phi _{0}=\phi $, $\phi _{m}=\psi $ and, for any $%
k=0,...,m-1 $, holds $\phi _{k}\rightarrow \phi _{k+1}$ or $\phi
_{k+1}\rightarrow \phi _{k}$.
\end{definition}

Obviously, the relation $\phi \overset{C}{\simeq }\psi $ holds if and only
if $\phi $ and $\psi $ belong to the same connected component of the
undirected graph of $PG$. In particular, the $C$-homotopy is an equivalence
relation.

\begin{definition}
\label{d4.8}\RM Let $\pi _{1}(G^{\ast })$ be a set of equivalence classes
under $C$-homotopy of based loops of a digraph $G^{\ast }$. The $C$-homotopy
class of a based loop $\phi $ will be denoted by $[\phi ]$.
\end{definition}

Note that $\pi _{1}(G^{\ast })=\pi _{0}(LG^{\ast })$ as follows directly
from Definitions \ref{d4.7} and \ref{d4.8}. Denote by $e$ the trivial loop $%
e:I_{0}^{\ast }\rightarrow G^{\ast }.$ We say that a loop $\phi $ is $C$-%
\emph{contractible} if $\phi \overset{C}{\simeq }e.$

\begin{example}
\RM A triangular loop is a loop $\phi :I_{3}^{\ast }\rightarrow G^{\ast }$
such that $I_{3}=\left( 0\rightarrow 1\rightarrow 2\leftarrow 3\right) .$%
\FRAME{ftbpFU}{4.4624in}{1.3898in}{0pt}{\Qcb{A triangular loop $\protect\phi
$ is $C$-contractible.}}{\Qlb{pic2}}{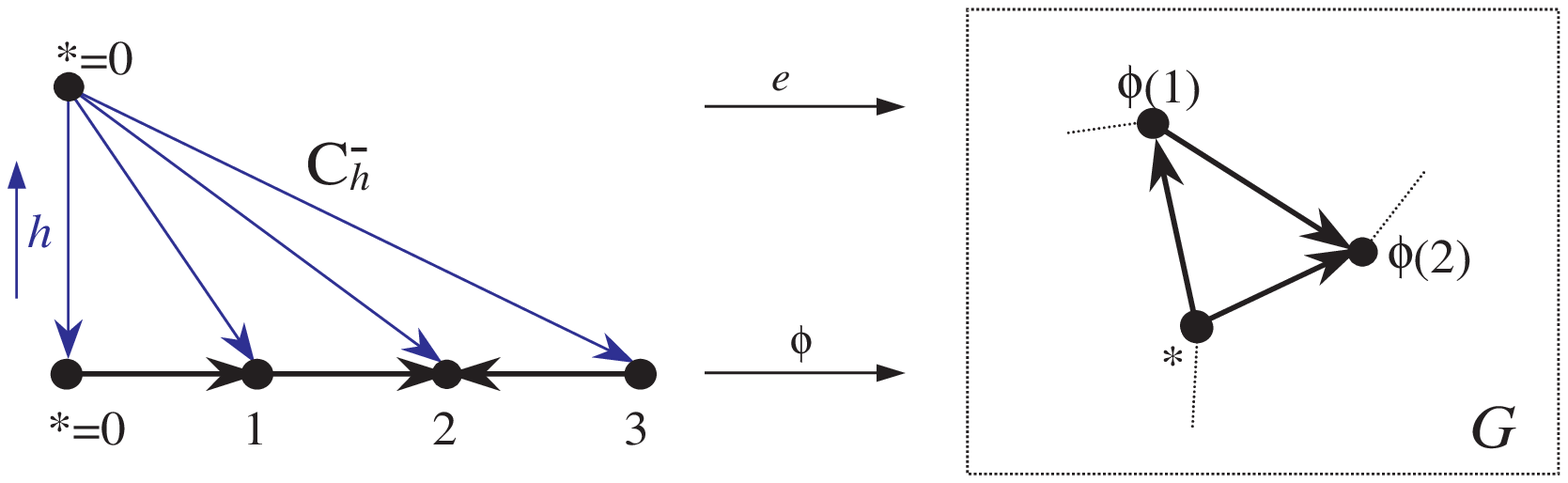}{\special{language "Scientific
Word";type "GRAPHIC";maintain-aspect-ratio TRUE;display "USEDEF";valid_file
"F";width 4.4624in;height 1.3898in;depth 0pt;original-width
6.6366in;original-height 2.0401in;cropleft "0";croptop "1";cropright
"1";cropbottom "0";filename 'pic2.eps';file-properties "XNPEU";}}

The triangular loop is $C$-contractible because the following shrinking map%
\begin{equation*}
h:I_{3}^{\ast }\rightarrow I_{0}^{\ast },\ \ h\left( k\right) =0\ \text{for
all }k=0,...,3,
\end{equation*}%
provides an inverse one-step $C$-homotopy between $\phi $ and $e$ (see Fig. %
\ref{pic2}).

A square loop is a loop $\phi :I_{4}^{\ast }\rightarrow G$ such that $%
I_{4}=\left( 0\rightarrow 1\rightarrow 2\leftarrow 3\leftarrow 4\right) .$%
The square loop can be $C$-contracted to $e$ in two steps as is shown on
Fig. \ref{pic3}.\FRAME{ftbpFU}{4.0335in}{1.7599in}{0pt}{\Qcb{A square loop $%
\protect\phi $ is $C$-contractible. Note that $\protect\phi \left( 0\right) =%
\protect\phi \left( 4\right) =\protect\psi \left( 0\right) =\protect\psi %
\left( 2\right) =\ast .$}}{\Qlb{pic3}}{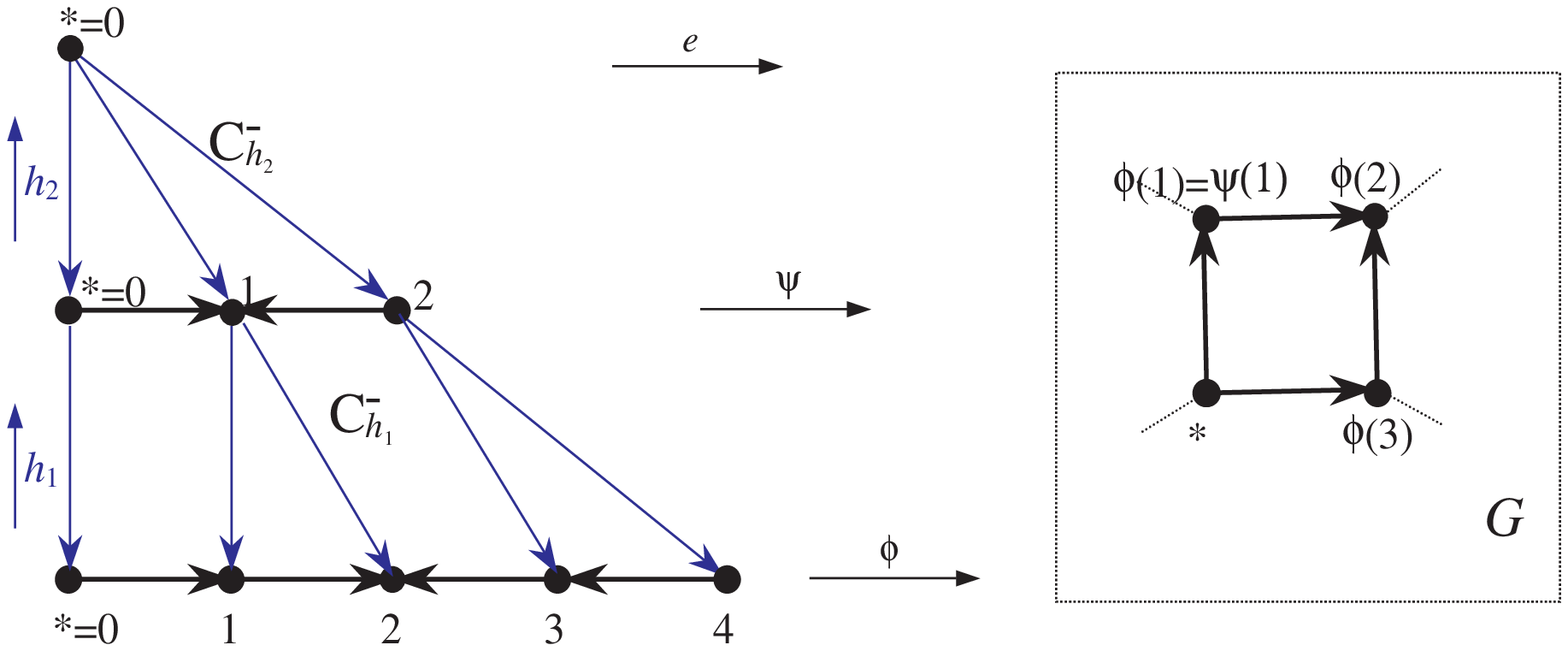}{\special{language
"Scientific Word";type "GRAPHIC";maintain-aspect-ratio TRUE;display
"USEDEF";valid_file "F";width 4.0335in;height 1.7599in;depth
0pt;original-width 7.3215in;original-height 3.1678in;cropleft "0";croptop
"1";cropright "1";cropbottom "0";filename 'pic3.eps';file-properties
"XNPEU";}}

On the other hand, in the case $n\geq 5$, a loop $\phi :I_{n}^{\ast
}\rightarrow G^{\ast }$ does not have to be $C$-contractible, which is the
case, for example, if $\phi $ is the natural map $I_{n}\rightarrow S_{n}$.
\end{example}

\subsection{Local description of $C$-homotopy}

We prove here technical results which has a self-sustained meaning for
practical work with $C$-homotopies.

\begin{lemma}
\label{l5.17}Let $a,b$ be two vertices in a digraph $G$ such that either $%
a=b $ or $a\rightarrow b\rightarrow a$. Then any path-map $\phi
:I_{n}\rightarrow G$, such that $\phi \left( i\right) =a$, $\phi \left(
i+1\right) =b$, and $i\rightarrow i+1$ in $I_{n}$, is $C$-homotopic to a
path-map $\phi ^{\prime }\colon {I}_{n}^{\prime }\rightarrow G$ where $%
I_{n}^{\prime }$ is obtained from $I_{n}$ by changing one edge $i\rightarrow
i+1$ to $i+1\rightarrow i$ and $\phi ^{\prime }\left( j\right) =\phi \left(
j\right) $ for all $j=0,...,n.$
\end{lemma}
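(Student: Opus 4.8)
The plan is to treat the two hypotheses separately and, in each case, to exhibit $\phi$ and $\phi'$ as $C$-homotopic to one common auxiliary path-map through a single one-step $C$-homotopy on each side. Since $\phi$ and $\phi'$ agree everywhere except for the orientation of the edge at position $i$, every shrinking map I use will be the identity away from $\{i,i+1\}$, so all verifications reduce to the local picture there.

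First I would dispose of the case $a=b$. Then the edge at position $i$ is sent to the single vertex $a$ by both $\phi$ and $\phi'$, so both are path-maps. Let $h\colon I_n\to I_{n-1}$ be the shrinking of Definition \ref{d4.2} that collapses the edge at position $i$ (identity on $\{0,\dots,i\}$ and $k\mapsto k-1$ on $\{i+1,\dots,n\}$), and let $\psi\colon I_{n-1}\to G$ be the induced map. Because both endpoints of the collapsed edge carry the value $a$, the condition (\ref{fipsih}) holds with equalities throughout, so $h$ realizes a one-step $C$-homotopy and $\phi\overset{C}{\simeq}\psi$. Collapsing the same edge of $I_n'$ produces literally the same line digraph $I_{n-1}$ and the same $\psi$ (reversing an edge one is about to delete changes nothing), whence $\phi'\overset{C}{\simeq}\psi$ and thus $\phi\overset{C}{\simeq}\phi'$.

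The substantive case is $a\to b\to a$ with $a\neq b$; note that the reversed edge $i+1\to i$ with values $b,a$ requires $b\to a$, so the double-edge hypothesis is precisely what makes $\phi'$ a path-map at all. Here I would build a bridge $\Theta\colon\widetilde I\to G$, where $\widetilde I$ is obtained from $I_n$ by inserting one new vertex $w$ between $i$ and $i+1$ oriented as a source peak, i.e. with edges $i\leftarrow w\rightarrow(i+1)$, and $\Theta$ agrees with $\phi$ on the old vertices and sends $w$ to $a$. The edge $w\rightarrow(i+1)$ maps to $a\,\overrightarrow{=}\,b$, valid because $a\to b$, and $w\rightarrow i$ maps to $a\,\overrightarrow{=}\,a$; hence $\Theta$ is a path-map. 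Collapsing the left peak-edge merges $w$ into $i$, and since every relevant value matches on the nose this recovers $\phi$ on $I_n$ (the surviving peak-edge becomes the forward edge $i\to i+1$); collapsing the right peak-edge merges $w$ into $i+1$, the surviving peak-edge becomes the reversed edge $i+1\to i$, and the single nontrivial instance of (\ref{fipsih}) is $\Theta(w)=a\,\overrightarrow{=}\,b$, again supplied by $a\to b$. Each collapse is a shrinking, hence a one-step $C$-homotopy, and chaining them gives $\phi\overset{C}{\simeq}\Theta\overset{C}{\simeq}\phi'$.

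I expect the only delicate point to be the orientation bookkeeping: I must arrange the two edges of the source peak in $\widetilde I$ so that collapsing one leaves a forward edge (yielding $I_n$) while collapsing the other leaves a reversed edge (yielding $I_n'$), and I must confirm that each collapsing map is a genuine shrinking in the sense of Definition \ref{d4.2} --- monotone, surjective, endpoint-preserving, with every surviving edge the unique preimage of a target edge. Once the orientations are set up this way, (\ref{fipsih}) falls out by direct substitution into Definition \ref{d4.3}, the only genuine edge of $G$ invoked being $a\to b$; the complementary relation $b\to a$ enters solely through the well-definedness of $\phi'$ itself.
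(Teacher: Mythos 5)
Your proposal is correct and takes essentially the same route as the paper: the paper's proof likewise subdivides the edge at position $i$ by inserting one auxiliary vertex mapped to $a$, producing an intermediate path-map on $I_{n+1}$ that is joined to both $\phi$ and $\phi'$ by one-step $C$-homotopies realized by the two edge collapses. The only differences are cosmetic: the paper orients the inserted peak as a sink $i\rightarrow w\leftarrow (i+1)$ (so its verification uses $b\,\overrightarrow{=}\,a$ where your source peak uses $a\rightarrow b$), and its single two-step diagram covers $a=b$ uniformly, making your separate treatment of that case harmless but unnecessary.
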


\begin{proof}
A $C$-homotopy between $\phi $ and $\phi ^{\prime }$ is constructed in two
one-step inverse $C$-homotopies as is shown on the following diagram:
\begin{equation*}
\begin{matrix}
\phi ^{\prime }:I_{n^{\prime }}\rightarrow G & ... & \underset{a}{i} &
\leftarrow & \underset{b}{i+1} & ... &  \\
&  & \downarrow & \searrow &  & \searrow &  \\
\psi :I_{n+1}\rightarrow G & ... & \underset{a}{i} & \rightarrow & \underset{%
a}{i+1} & \leftarrow & \underset{b}{i+2}... \\
&  & \uparrow &  & \uparrow & \nearrow &  \\
\phi :I_{n}\rightarrow G & ... & \underset{a}{i} & \rightarrow & \underset{b}%
{i+1} & ... &
\end{matrix}%
\
\end{equation*}%
The subscript under each element of the line digraph indicates the value of
the loop on this element.
\end{proof}

Any path-map $\phi \colon I_{n}\rightarrow G$ defines a sequence $\theta
_{\phi }=\left\{ v_{i}\right\} _{i=0}^{n}$ of vertices of $G$ by $v_{i}=\phi
\left( i\right) .$ By definition of a path-map, we have for any $i=0,...,n-1$
one of the following relations:%
\begin{equation*}
v_{i}=v_{i+1},\ \ \ \ v_{i}\rightarrow v_{i+1},\ \ \ \ v_{i+1}\rightarrow
v_{i}.
\end{equation*}%
If $\phi $ is a based path-map, then we have $v_{0}=\ast $, if $\phi $ is a
loop then $v_{0}=\ast =v_{n}$. We consider $\theta _{\phi }$ as a word over
the alphabet $V_{G}$.

\begin{theorem}
\label{p4.12} Two loops $\phi :I_{n}^{\ast }\rightarrow G^{\ast }$ and $\psi
:I_{m}^{\ast }\rightarrow G^{\ast }$ are $C$-homotopic if and only if the
word $\theta _{\psi }$ can be obtained from $\theta _{\phi }$ by a finite
sequence of the following transformations (or inverses to them):

$\left( i\right) $ $...abc...$ $\mapsto $ $...ac...$ where $\left(
a,b,c\right) $ is any permutation of a triple $\left( v,v^{\prime
},v^{\prime \prime }\right) $ of vertices forming a triangle in $G$, that
is, such that $v\rightarrow v^{\prime },v\rightarrow v^{\prime \prime
},v^{\prime }\rightarrow v^{\prime \prime }$ (and the dots \textquotedblleft
$...$\textquotedblright\ denote the unchanged parts of the words).

$\left( ii\right) $ $...abc...\mapsto ...abd...$ where $\left(
a,b,c,d\right) $ is any cyclic permutation (or a cyclic permutation in the
inverse order) of a quadruple $\left( v,v^{\prime },v^{\prime \prime
},v^{\prime \prime \prime }\right) $ of vertices forming a square in $G$,
that is, such that $v\rightarrow v^{\prime },v\rightarrow v^{\prime \prime
\prime },v^{\prime }\rightarrow v^{\prime \prime },v^{\prime \prime \prime
}\rightarrow v^{\prime \prime }$.

$\left( iii\right) $ $...abcd...\mapsto ...ad...\ $where $(a,b,c,d)$ is as
in $\left( ii\right) $.

$\left( iv\right) $ $...aba...\rightarrow ...a...$ if $a\rightarrow b$ or $%
b\rightarrow a.$

$\left( v\right) $ $...aa...\mapsto ...a...$
\end{theorem}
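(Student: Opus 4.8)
My plan is to argue entirely at the level of the words $\theta_\phi$, $\theta_\psi$, so I would first justify that the word determines the $C$-homotopy class. Given a word, the only freedom in choosing an underlying path-map $\phi\colon I_n\to G$ is the orientation of those edges of $I_n$ joining two equal values or a double edge $a\rightarrow b\rightarrow a$ of $G$; by Lemma \ref{l5.17} reversing such an edge yields a $C$-homotopic path-map, so the class is indeed a function of the word and I may speak of transforming words. Since $C$-homotopy is generated by one-step direct and inverse $C$-homotopies, it suffices to treat a single one. By Remark \ref{Remfipsi} a one-step direct $C$-homotopy through a shrinking map $h\colon I_n\to I_m$ factors as an ordinary one-step homotopy $\phi\simeq\psi\circ h$ on the fixed domain $I_n$, followed by the reparametrisation $\psi\circ h\leadsto\psi$; I would analyse these two pieces separately.

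For the \emph{sufficiency} ("if") direction I would realise each of (i)--(v) by one or two explicit one-step $C$-homotopies whose shrinking map is the identity off the affected block: (v) collapses one edge between equal values, (iv) collapses the two edges of $aba$, (i) collapses the middle letter of a triangle, and (ii), (iii) move the path across, respectively collapse three sides of, a square. In each case the condition $\phi(i)\,\overrightarrow{=}\,\psi(h(i))$ of Remark \ref{Remfipsi} is checked on the block, the choice of a direct versus inverse step (and, for (i), of which letter sits on the plateau) being dictated by the orientations of the edges of $G$ among the three or four vertices involved; since these vertices form a triangle or a square, an admissible choice always exists, after orienting the edges of $I_n$ via Lemma \ref{l5.17} if needed.

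For the \emph{necessity} ("only if") direction I would treat the two pieces above. The reparametrisation $\psi\circ h\leadsto\psi$ only repeats each letter $\psi(j)$ over the plateau $h^{-1}(j)$, so deleting the repeats is a sequence of transformations (v). For the one-step homotopy $\alpha\simeq\beta$ of loops on $I_n$ (with $\alpha(i)\,\overrightarrow{=}\,\beta(i)$), whose cylinder $I_n\boxdot I_1$ is a ladder of square cells, I would sweep from $\theta_\alpha$ to $\theta_\beta$ through the ``up-and-over'' words $\gamma_k=(\alpha(0),\dots,\alpha(k),\beta(k),\dots,\beta(n))$. The extreme words $\gamma_n$ and $\gamma_0$ reduce to $\theta_\alpha$ and $\theta_\beta$ by transformation (v) using the pinning $\alpha(0)=\beta(0)=\ast=\alpha(n)=\beta(n)$, while the step $\gamma_k\mapsto\gamma_{k-1}$ swaps the two ways $\alpha(k-1)\,\alpha(k)\,\beta(k)$ and $\alpha(k-1)\,\beta(k-1)\,\beta(k)$ across the $(k-1)$-st square cell -- exactly transformation (ii) when that cell is a genuine square of $G$.

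The main obstacle I anticipate is the case analysis of the degenerate square cells in this sweep: the four corners $\alpha(k-1),\alpha(k),\beta(k-1),\beta(k)$ may coincide in various patterns or the cell may carry a double edge, and one must verify that each such collapse turns the step $\gamma_k\mapsto\gamma_{k-1}$ into transformation (i), (iv), (v), or the identity, so that nothing falls outside the list. Proposition \ref{pijk}, which sorts the $\partial$-invariant $2$-paths into double edges, triangles and squares, is the natural bookkeeping device for this enumeration. The remaining care is routine: checking that every intermediate $\gamma_k$ is an honest path-map and that the endpoint pinning of loops is respected throughout.
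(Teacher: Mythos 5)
Your proposal is correct and is in substance the paper's own proof: the same use of Lemma \ref{l5.17} to make the $C$-homotopy class a function of the word $\theta_{\phi}$, the same explicit one- and two-step $C$-homotopies realizing the moves (i)--(v), and, for the converse, the same reduction of a one-step direct $C$-homotopy to the local cell relation $aa'b'\sim abb'$ (your step $\gamma_{k}\mapsto \gamma_{k-1}$) together with the identical degenerate-case analysis in which coincident corners are absorbed by (i), (iv), (v) or the identity. The only deviations are organizational: the paper runs a single induction along $\func{C}_{h}$ proving $a_{0}\dots a_{i}b_{h(i)}\sim b_{0}\dots b_{h(i)}$ rather than first factoring through $\psi \circ h$ (plateaus deleted by (v)) and then sweeping the ladder $I_{n}\boxdot I_{1}$, and your appeal to Proposition \ref{pijk} as a bookkeeping device is not actually needed -- that proposition classifies elements of $\Omega _{2}$, not cylinder cells, and the required enumeration of coincidence patterns among the four corners is done directly -- though nothing in your argument rests on it.
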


\begin{proof}
Let us first show that if $\theta _{\phi }=\theta _{\psi }$ then $\phi
\overset{C}{\simeq }\psi $. If, for any edge $i\rightarrow i+1$ (or $%
i\leftarrow i+1)$ in $I_{n}$ we have also $i\rightarrow i+1$ (resp. $%
i\leftarrow i+1)$ in $I_{m}$ then $I_{n}=I_{m}$ and $\phi =\psi $ (although $%
n=m,$ the line digraphs $I_{n}$ and $I_{m}$ could a priori be different
elements of $\mathcal{I}_{n}$). Assume that, for some $i$, we have $%
i\rightarrow i+1$ in $I_{n}$ but $i\leftarrow i+1$ in $I_{m}$. Then, by
Lemma \ref{l5.17}, we can change the edge $i\rightarrow i+1$ in $I_{n}$ to $%
i\leftarrow i+1$ while staying in the same $C$-homotopy class of $\phi $.
Arguing by induction, we obtain $\phi \overset{C}{\simeq }\psi .$

We write $\theta _{\phi }\sim \theta _{\psi }$ if $\theta _{\psi }$ can be
obtained from $\theta _{\phi }$ by a finite sequence of transformations $%
\left( i\right) -\left( v\right) $ (or inverses to them). Let us show that $%
\theta _{\phi }\sim \theta _{\psi }$ implies that $\phi \overset{C}{\simeq }%
\psi $. For that we construct for each of the transformations $\left(
i\right) -\left( v\right) $ a $C$-homotopy between $\phi $ and $\psi $. Note
that in this part of the proof $\phi $ and $\psi $ can be arbitrary
path-maps (not necessarily based).

$\left( i\right) $ Assume that $a\rightarrow c$ (the case $c\rightarrow a$
is similar). Then either $b\rightarrow c$ or $a\rightarrow b$ (otherwise we
would have got $a\rightarrow c\rightarrow b\rightarrow a$ which is excluded
by a triangle hypothesis). The $C$-homotopies in the both cases are shown on
the diagram:%
\begin{equation*}
\begin{matrix}
I_{m} & ... & a & \rightarrow & c & ... &  \\
&  & | & \diagdown &  & \diagdown &  \\
I_{n} & ... & a & - & b & \rightarrow & c...%
\end{matrix}%
\ \ \ \ \ \ \ \
\begin{matrix}
I_{m} & ... & a & \rightarrow & c & ... &  \\
&  & | &  & | & \diagdown &  \\
I_{n} & ... & a & \rightarrow & b & - & c...%
\end{matrix}%
\
\end{equation*}%
Each position here corresponds to a vertex in a cylinder $\func{C}_{h}$ or $%
\func{C}_{h}^{-}$ (that is, in $I_{n}$ or $I_{m}$) and shows its image ($a,b$
or $c$) under the map $\phi $ resp. $\psi $. The arrows and undirected
segments shows the edges in the cylinder $\func{C}_{h}$ or $\func{C}_{h}^{-}$
(in particular, horizontal arrows and segments show the edges in $I_{n}$ and
$I_{m}$). The undirected segments, such as $a-b$ and $c-b$, should be given
directions matching those on the digraph $G$.

$\left( ii\right) $ Assume as above $a\rightarrow d$ and $b\rightarrow c$.
Then we have two-step $C$-homotopy as on the diagram:%
\begin{equation*}
\begin{matrix}
&  &  &  &  &  &  &  &  \\
I_{m} & ... & a & \rightarrow & d & - & c & ... &  \\
&  & \uparrow & \nwarrow &  & \nwarrow &  & \nwarrow &  \\
I_{n+1} & ... & a & - & a & \rightarrow & d & - & c... \\
&  & | &  & | &  & | & \diagup &  \\
I_{n} & ... & a & - & b & \rightarrow & c & ... &
\end{matrix}%
\end{equation*}

$\left( iii\right) $ Assume $a\rightarrow d$. Then we have $b\rightarrow c$,
and the $C$-homotopy is shown on the diagram:%
\begin{equation*}
\begin{matrix}
I_{m} & ... &  & ... & a & \rightarrow & d & ... &  \\
&  &  & \diagup & | &  & | & \diagdown &  \\
I_{n} & ... & a & - & b & \rightarrow & c & - & d%
\end{matrix}%
\end{equation*}%
Note that if $a\rightarrow b$ then also $d\rightarrow c$, and if $%
b\rightarrow a$ then also $c\rightarrow d$.

$\left( iv\right) $ Assuming $a\rightarrow b$ we obtain the following $C$%
-homotopy:
\begin{equation*}
\begin{matrix}
I_{m} & ... &  & ... & a & ... &  \\
&  &  & \swarrow & \downarrow & \searrow &  \\
I_{n} & ... & a & \rightarrow & b & \leftarrow & a...%
\end{matrix}%
\end{equation*}

$\left( v\right) $ Here is the required $C$-homotopy:%
\begin{equation*}
\begin{matrix}
I_{m} & ... &  & ... & a & ... \\
&  &  & \nearrow & \uparrow &  \\
I_{n} & ... & a & - & a & ...%
\end{matrix}%
\end{equation*}

Before we go to the second half of the proof, observe that the transformation%
\begin{equation}
...abc...\mapsto ...ac...  \label{abcac}
\end{equation}%
of words is possible not only in the case when $a,b,c$ come from a triangle
as in $\left( i\right) $ but also when \thinspace $a,b,c$ form a \emph{%
degenerate triangle}, that is, when there are identical vertices among $%
a,b,c $ while distinct vertices among $a,b,c$ are connected by an edge.
Indeed, in the case $a=b$ we have by $\left( v\right) $%
\begin{equation*}
abc=aac\sim ac,
\end{equation*}%
in the case $a=c$ we have by $\left( iv\right) $ and $\left( v\right) $%
\begin{equation*}
abc=aba\sim a\sim ac,
\end{equation*}%
and in the case $b=c$ by $\left( v\right) $
\begin{equation*}
abc=acc\sim ac.
\end{equation*}

Now let us prove that $\phi \overset{C}{\simeq }\psi $ implies $\theta
_{\phi }\sim \theta _{\psi }$. It suffices to assume that there exists an
one-step direct $C$-homotopy from $\phi $ to $\psi $ given by a shrinking
map $h:I_{n}^{\ast }\rightarrow I_{m}^{\ast }.$ Set%
\begin{equation*}
\theta _{\phi }=a_{0}a_{1}...a_{n}\ \ \text{and\ \ }\theta _{\psi
}=b_{0}b_{1}...b_{m}
\end{equation*}%
where $a_{i},b_{j}\in V_{G}$ and $a_{0}=b_{0}=a_{n}=b_{m}=\ast $. For any $%
i=0,...,n$ set $j=h\left( i\right) $ and consider two words%
\begin{equation*}
A_{i}=a_{0}a_{1}...a_{i}b_{j}\ \ \ \text{and\ \ \ }B_{i}=b_{0}b_{1}...b_{j}.
\end{equation*}%
We will prove by induction in $i$ that $A_{i}\sim B_{i}$ for all $i=0,...,n$%
. If this is already known, then for $i=n$ we have $j=m$ and%
\begin{equation*}
a_{0}a_{1}...a_{n}b_{m}\sim b_{0}b_{1}...b_{m}.
\end{equation*}%
Since $a_{n}b_{m}=\ast \ast \sim \ast =a_{n}$, it follows that $\theta
_{\phi }\sim \theta _{\psi }$.

Now let us prove that $A_{i}\sim B_{i}$ for all $i=0,...,n$. For $i=0$ we
have $A_{0}=a_{0}b_{0}=\ast \ast \sim \ast =b_{0}=B_{0}.$ Assuming that $%
A_{i}\sim B_{i}$, let us prove that $A_{i+1}\sim B_{i+1}.$ Let us consider a
structure of the cylinder $\func{C}_{h}$ over the edge between $i$ and $i+1$
in $I_{n}.$ Set $h\left( i\right) =j$, $a=a_{i},a^{\prime
}=a_{i+1},b=b_{j},b^{\prime }=b_{j+1}$. There are only the following two
cases:
\begin{equation}
\begin{matrix}
& b &  \\
& \nearrow \ \ \nwarrow &  \\
a & - & a^{\prime }%
\end{matrix}%
\ \ \ \ \text{and\ }\ \
\begin{matrix}
b & - & b^{\prime } \\
\uparrow &  & \uparrow \\
a & - & a^{\prime }%
\end{matrix}%
\ .\ \ \ \   \label{4.271}
\end{equation}%
Note that each arrow on $\func{C}_{h}$ transforms either to an arrow between
the vertices of $G$ or to the identity of the vertices.

Consider first the case of the left diagram in (\ref{4.271}). In this case $%
b^{\prime }=b$ and we obtain by (\ref{abcac}) and by the induction
hypothesis that
\begin{equation*}
A_{i+1}=a_{0}a_{1}...a_{i-1}aa^{\prime }b\sim
a_{0}a_{1}...a_{i-1}ab=A_{i}\sim B_{i}=B_{i+1}.
\end{equation*}%
Consider now the case of the right diagram in (\ref{4.271}) and prove that
in this case
\begin{equation}
aa^{\prime }b^{\prime }\sim abb^{\prime }.  \label{aabb}
\end{equation}%
If (\ref{aabb}) is already known, then we obtain%
\begin{equation*}
A_{i+1}=a_{0}a_{1}...a_{i-1}aa^{\prime }b^{\prime }\sim
a_{0}a_{1}...a_{i-1}abb^{\prime }=A_{i}b^{\prime }\sim B_{i}b^{\prime
}=B_{i+1},
\end{equation*}%
which concludes the induction step in this case.

In order to prove (\ref{aabb}) observe first that if all the vertices $%
a,a^{\prime },b,b^{\prime }$ are distinct, then they form a square and (\ref%
{aabb}) follows by transformation $\left( ii\right) $. In the case $%
a^{\prime }=b$ (\ref{aabb}) is an equality, and in the case $a=b^{\prime }$
the relation (\ref{aabb}) follows by transformation $\left( iv\right) $:%
\begin{equation*}
aa^{\prime }b^{\prime }\sim a=b^{\prime }\sim abb^{\prime }.
\end{equation*}%
In the case $a=b$ the triple $a,a^{\prime },b^{\prime }$ is a triangle or a
degenerate triangle, and we obtained from (\ref{abcac}) and $\left( v\right)
$%
\begin{equation*}
aa^{\prime }b^{\prime }\sim ab^{\prime }\sim aab^{\prime }=abb^{\prime },
\end{equation*}%
and the case $a^{\prime }=b^{\prime }$ is similar. Finally, if $a=a^{\prime
} $ then similarly by $\left( v\right) $ and (\ref{abcac}) we obtain%
\begin{equation*}
aa^{\prime }b^{\prime }=aab^{\prime }\sim ab^{\prime }\sim abb^{\prime },
\end{equation*}%
and the case $b=b^{\prime }$ is similar.
\end{proof}

\begin{remark}
\RM Note that the transformation $\left( iii\right) $ was not used in the
second half of the proof, so $\left( iii\right) $ is logically not necessary
in the statement of Theorem \ref{p4.12}. Note also that $\left( iii\right) $
can be obtained as composition of $\left( ii\right) $ and $\left( iv\right) $
as follows:%
\begin{equation*}
abcd\sim adcd\sim ad.
\end{equation*}%
However, in applications it is still convenient to be able to use $\left(
iii\right) $.
\end{remark}

\begin{example}
\RM A triangular loop on Fig. \ref{pic2} is contractible because if $a,b,c$
are vertices of a triangle then%
\begin{equation*}
abca\sim aca\sim a.
\end{equation*}%
A square loop on Fig. \ref{pic3} is contractible because if $a,b,c,d$ are
vertices of a square then%
\begin{equation*}
abcda\sim ada\sim a.
\end{equation*}

Consider the loops $\phi $ and $\psi $ on Fig. \ref{pic1}, that are known to
be $C$-homotopic. It is shown on Fig. \ref{pic5} how to transform $\theta
_{\phi }$ to $\theta _{\psi }$ using transformations of Theorem \ref{p4.12}.%
\FRAME{ftbpFU}{4.1087in}{2.4267in}{0pt}{\Qcb{Transforming a $5$-cycle $%
\protect\theta _{\protect\phi }$ to a $3$-cycle $\protect\theta _{\protect%
\psi }$ using successively $\left( i\right) ^{-}$ (the inverse of $\left(
i\right) $), $\left( i\right) ,\left( ii\right) $ and $\left( iii\right) .$ }%
}{\Qlb{pic5}}{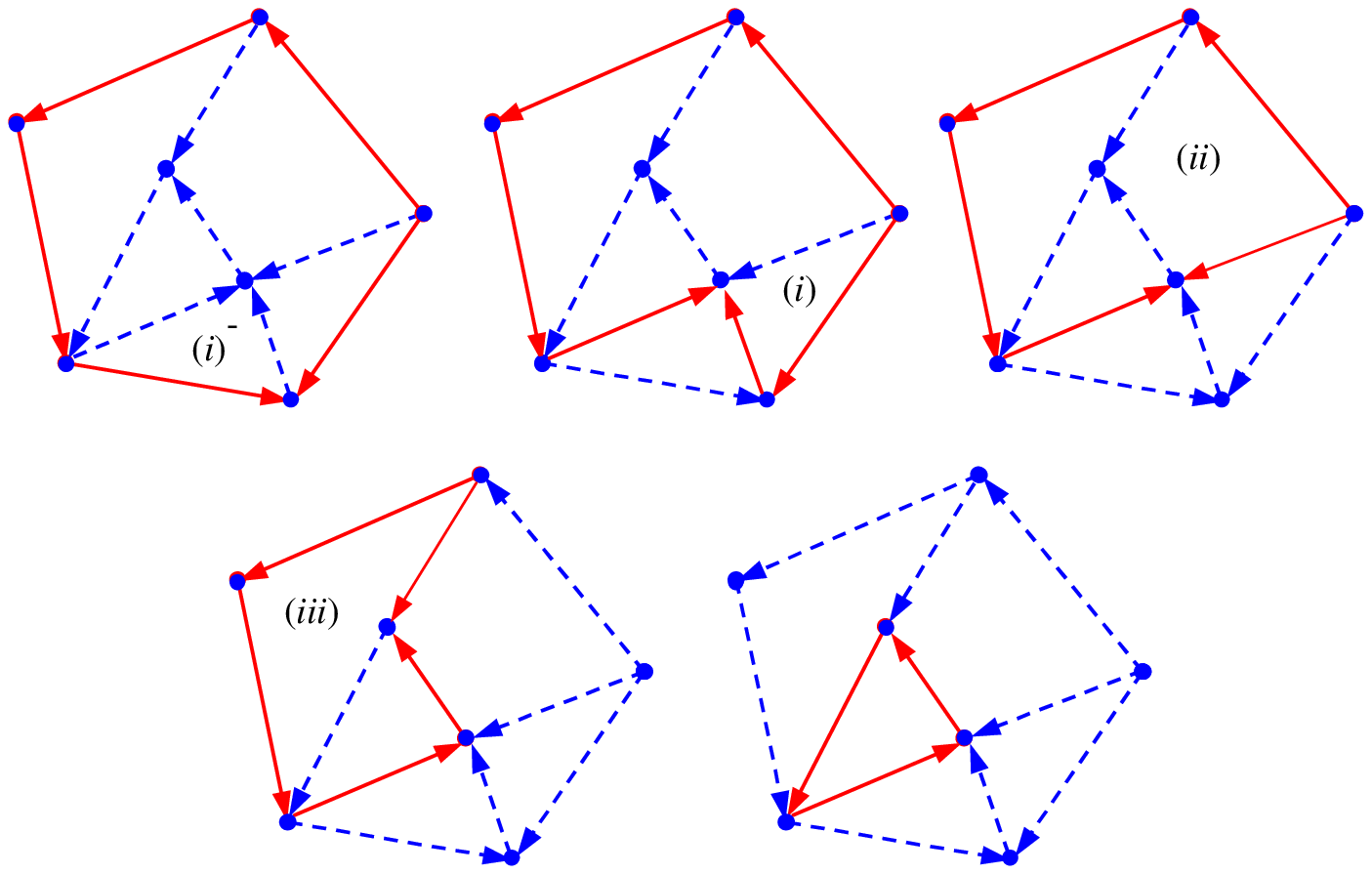}{\special{language "Scientific Word";type
"GRAPHIC";maintain-aspect-ratio TRUE;display "USEDEF";valid_file "F";width
4.1087in;height 2.4267in;depth 0pt;original-width 6.3027in;original-height
3.7118in;cropleft "0";croptop "1";cropright "1";cropbottom "0";filename
'pic5.eps';file-properties "XNPEU";}}
\end{example}

\subsection{Group structure in $\protect\pi _{1}$}

\label{Secpi1}For any $I_{n}\in \mathcal{I}_{n}$ define a line digraph $\hat{%
I}_{n}\in \mathcal{I}_{n}$ as follows:
\begin{equation*}
i\rightarrow j\text{ in }\hat{I}_{n}\ \Leftrightarrow \ (n-i)\rightarrow
(n-j)\text{ in }I_{n}.
\end{equation*}%
For any two line digraphs $I_{n}$ and $I_{m}$, define the line digraph $%
I_{n+m}=I_{n}\vee I_{m}\in \mathcal{I}_{n+m}$ that is obtained from $I_{n}$
and $I_{m}$ by identification of the vertices $n\in I_{n}$ and $0\in I_{m}$.

\begin{definition}
\RM\label{d4.1}$\left( i\right) $ For a path-map $\phi :I_{n}\rightarrow G$
define the \emph{inverse path-map }$\hat{\phi}:\hat{I}_{n}\rightarrow G$ by $%
\hat{\phi}(i)=\phi (n-i).$

$\left( ii\right) $ For two path-maps $\phi :I_{n}\rightarrow G$ and $\psi
:I_{m}\rightarrow G$ with $\phi (n)=\psi (0)$ define the \emph{concatenation
path-map} $\phi \vee \psi :I_{n+m}\rightarrow G$ by%
\begin{equation*}
\phi \vee \psi (i)=%
\begin{cases}
\phi (i), & 0\leq i\leq n \\
\psi (i-n), & n\leq i\leq n+m.%
\end{cases}%
\end{equation*}
\end{definition}

The operation $\phi \mapsto \hat{\phi}$ is evidently an involution on the
set of path-maps. Clearly, if $\phi $ is a loop in $G^{\ast }$ then $\hat{%
\phi}$ is also a loop, and the concatenation of two loops is also a loop.
Let us define a product in $\pi _{1}\left( G^{\ast }\right) $ as follows.

\begin{definition}
\RM For any two loops
\begin{equation*}
\phi \colon I_{n}^{\ast }\rightarrow G^{\ast }\ \ \ \text{and}\ \ \psi
\colon I_{m}^{\ast }\rightarrow G^{\ast }
\end{equation*}%
define the product of $[\phi ]$ and $[\psi ]$ by
\begin{equation}
\lbrack \phi ]\cdot \lbrack \psi ]=[\phi \vee \psi ],  \label{4.19}
\end{equation}%
where $\phi \vee \psi :I_{n+m}^{\ast }\rightarrow G^{\ast }$ is the
concatenation of $\phi $ and $\psi $.
\end{definition}

\begin{lemma}
\label{l4.9}The product in $\pi _{1}\left( G^{\ast }\right) $ is well
defined.
\end{lemma}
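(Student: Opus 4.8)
Lemma \ref{l4.9} asserts that the product $[\phi]\cdot[\psi]=[\phi\vee\psi]$ on $\pi_1(G^\ast)$ is well defined. Since the product is defined on $C$-homotopy classes via representatives, well-definedness means: if $\phi\overset{C}{\simeq}\phi'$ and $\psi\overset{C}{\simeq}\psi'$, then $\phi\vee\psi\overset{C}{\simeq}\phi'\vee\psi'$. (One should also note that concatenation of two loops is again a loop, which is already observed in the text preceding the lemma, so the right-hand side is a legitimate element of $\pi_1$.)

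\textbf{Plan of proof.} The plan is to reduce to a single one-step $C$-homotopy on one factor, with the other factor held fixed, and then exploit the shrinking-map / cylinder description directly. Concretely, because $C$-homotopy is generated by one-step direct and inverse $C$-homotopies (Definition of $\overset{C}{\simeq}$), it suffices by symmetry and transitivity to show two things: (a) if $\phi'$ is obtained from $\phi$ by a single one-step $C$-homotopy, then $\phi'\vee\psi\overset{C}{\simeq}\phi\vee\psi$ for any fixed loop $\psi$; and (b) the analogous statement with the homotopy on the second factor, $\phi\vee\psi'\overset{C}{\simeq}\phi\vee\psi$. Chaining finitely many such steps then yields $\phi\vee\psi\overset{C}{\simeq}\phi'\vee\psi'$ from $\phi\overset{C}{\simeq}\phi'$ and $\psi\overset{C}{\simeq}\psi'$.

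\textbf{Carrying out the single step.} For (a), suppose a one-step direct $C$-homotopy from $\phi:I_n^\ast\to G^\ast$ to $\phi':I_{n'}^\ast\to G^\ast$ is given by a shrinking map $h:I_n\to I_{n'}$ with $\phi(i)\,\overrightarrow{=}\,\phi'(h(i))$ for all $i$ (Remark \ref{Remfipsi}). I would extend $h$ to a shrinking map $\tilde h:I_{n+m}\to I_{n'+m}$ on the concatenated domains by setting $\tilde h(i)=h(i)$ for $0\le i\le n$ and $\tilde h(i)=h(n)+(i-n)=n'+(i-n)$ for $n\le i\le n+m$; this is well defined at $i=n$ since $h(n)=n'$, it is monotone, and it sends the glued vertex to the glued vertex, so $\tilde h$ is a shrinking map carrying $I_{n+m}$ onto $I_{n'+m}$, where the $I_m$-part is reproduced by the identity. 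Then the comparison condition $(\phi\vee\psi)(i)\,\overrightarrow{=}\,(\phi'\vee\psi)(\tilde h(i))$ holds for all $i$: for $0\le i\le n$ it is exactly the given relation for $h$, and for $n\le i\le n+m$ both sides equal $\psi(i-n)$, so the relation holds trivially (with equality). Hence $\tilde h$ furnishes a one-step direct $C$-homotopy from $\phi\vee\psi$ to $\phi'\vee\psi$. The inverse-$C$-homotopy case is identical with $\func{C}_h$ replaced by $\func{C}_h^-$. Statement (b) is symmetric: one prepends the identity on the $I_n$-part and uses $h$ shifted by $n$ on the $I_m$-part, checking monotonicity at the junction in the same way.

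\textbf{Expected main obstacle.} The only delicate point is the behavior at the concatenation vertex $i=n$: I must confirm that the extended map $\tilde h$ remains a genuine \emph{shrinking} map (monotone, endpoint-preserving, with each edge of the target having a single-edge preimage) and that the compatibility condition $(\phi\vee\psi)(i)\,\overrightarrow{=}\,(\phi'\vee\psi)(\tilde h(i))$ is unbroken there. Both reduce to the fact that all the maps involved agree at the gluing point $\phi(n)=\psi(0)=\ast$ (the loops are based and share the base vertex), so the seam introduces no new edge-conditions and no violation of monotonicity. Once this junction bookkeeping is verified, the rest is a routine transcription of the single-step data through the concatenation, and transitivity of $\overset{C}{\simeq}$ closes the argument.
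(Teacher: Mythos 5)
Your proof is correct and follows essentially the same route as the paper: the paper also reduces to one-step $C$-homotopies and notes that a one-step $C$-homotopy between $\phi$ and $\phi'$ ``easily extends'' to one between $\phi\vee\psi$ and $\phi'\vee\psi$, then handles the second factor symmetrically and concludes by transitivity. You have merely filled in the details the paper leaves implicit, namely the explicit extended shrinking map $\tilde h$ and the verification of the condition $(\phi\vee\psi)(i)\,\overrightarrow{=}\,(\phi'\vee\psi)(\tilde h(i))$ at the gluing vertex.
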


\begin{proof}
Let $\phi ,\phi ^{\prime },\psi ,\psi ^{\prime }$ be loops of $G^{\ast }$
and let
\begin{equation}
\phi \overset{C}{\simeq }\phi ^{\prime },\ \ \psi \overset{C}{\simeq }\psi
^{\prime }.  \label{4.20}
\end{equation}%
We must prove that
\begin{equation}
\phi \vee \psi \overset{C}{\simeq }\phi ^{\prime }\vee \psi ^{\prime }.
\label{4.21}
\end{equation}%
It suffices to consider only the case when the both $C$-homotopies in (\ref%
{4.20}) are one-step $C$-homotopies. Then we have
\begin{equation*}
\phi \vee \psi \overset{C}{\simeq }\phi ^{\prime }\vee \psi
\end{equation*}%
because one-step $C$-homotopy between $\phi $ and $\phi ^{\prime }$ easily
extends to that between $\phi \vee \psi $ and $\phi ^{\prime }\vee \psi $.
In the same way we obtain%
\begin{equation*}
\phi ^{\prime }\vee \psi \overset{C}{\simeq }\phi ^{\prime }\vee \psi
^{\prime },
\end{equation*}%
whence (\ref{4.21}) follows.
\end{proof}

\begin{lemma}
\label{l4.10}For any loop $\phi \colon I_{n}^{\ast }\rightarrow G^{\ast }$
we have $\phi \vee \hat{\phi}\overset{C}{\simeq }e$ where $\hat{\phi}$ is
the inverse loop for the loop $\phi $ and
\begin{equation}
e:I_{0}^{\ast }\rightarrow G^{\ast }  \label{e}
\end{equation}
is the trivial loop.
\end{lemma}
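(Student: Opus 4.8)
The plan is to invoke the local word-description of $C$-homotopy from Theorem \ref{p4.12}, which turns the statement into an elementary reduction of a word to a single letter. Writing $\theta _{\phi }=a_{0}a_{1}\dots a_{n}$ with $a_{0}=a_{n}=\ast $, the inverse loop $\hat{\phi}$ has word $a_{n}a_{n-1}\dots a_{0}$, and by Definition \ref{d4.1} the concatenation $\phi \vee \hat{\phi}$ has the palindromic word
\[
\theta _{\phi \vee \hat{\phi}}=a_{0}a_{1}\dots a_{n-1}a_{n}a_{n-1}\dots a_{1}a_{0}.
\]
Since $\theta _{e}=\ast =a_{0}$, it suffices by Theorem \ref{p4.12} to show that $\theta _{\phi \vee \hat{\phi}}$ can be transformed into $a_{0}$ by the moves $(i)$--$(v)$.

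First I would isolate the central triple $a_{n-1}a_{n}a_{n-1}$, which has the shape $aba$ with $a=a_{n-1}$ and $b=a_{n}$. Because $a_{n-1}$ and $a_{n}$ are consecutive values of a path-map, we have $a_{n-1}\overrightarrow{=}a_{n}$ or $a_{n}\overrightarrow{=}a_{n-1}$; that is, either there is an edge between them or they coincide. In the first case transformation $(iv)$ collapses $a_{n-1}a_{n}a_{n-1}$ to $a_{n-1}$, while in the second case $a_{n-1}a_{n}a_{n-1}=a_{n-1}a_{n-1}a_{n-1}$ collapses to $a_{n-1}$ by two applications of $(v)$. In either case the word becomes $a_{0}\dots a_{n-1}a_{n-2}\dots a_{0}$, a shorter palindrome of exactly the same form.

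The main step is then a routine induction. For any sequence $a_{0},\dots ,a_{k}$ in which consecutive terms are equal or joined by an edge, set $P_{k}:=a_{0}\dots a_{k}a_{k-1}\dots a_{0}$; I would prove $P_{k}\sim a_{0}$ for all $k\geq 0$. The base case $P_{0}=a_{0}$ is immediate, and the collapse of the central triple above carries $P_{k}$ to $P_{k-1}$, completing the induction. Taking $k=n$ gives $\theta _{\phi \vee \hat{\phi}}=P_{n}\sim a_{0}=\theta _{e}$, whence $\phi \vee \hat{\phi}\overset{C}{\simeq }e$ by Theorem \ref{p4.12}.

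I do not expect a genuine obstacle here; the only point requiring a word of care is to check that every intermediate word is still a legitimate path-map word (consecutive letters equal or joined by an edge) and still begins and ends at $\ast $, so that each transformation is admissible and the endpoints are never disturbed. Both facts are clear, since the moves only alter the interior triple $a_{n-1}a_{n}a_{n-1}$ and afterwards leave $a_{k-2}$ adjacent to $a_{k-1}$ on both sides, an adjacency already present in the original path.
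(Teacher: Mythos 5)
Your proposal is correct and is essentially the paper's own proof: the paper likewise writes $\theta_{\phi\vee\hat{\phi}}=v_{0}\dots v_{n-1}v_{n}v_{n-1}\dots v_{0}$ and collapses this palindrome to $\ast$ by repeated use of the transformations $aba\mapsto a$ and $aa\mapsto a$ of Theorem \ref{p4.12}, then concludes $\phi\vee\hat{\phi}\overset{C}{\simeq}e$. You merely spell out what the paper leaves implicit, namely the induction on the palindrome length and the case split between $a_{n-1}\neq a_{n}$ (move $(iv)$) and $a_{n-1}=a_{n}$ (move $(v)$).
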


\begin{proof}
Let $\theta _{\phi }=v_{0}...v_{n}$. Then $\theta _{\hat{\phi}%
}=v_{n}...v_{0} $ and%
\begin{equation*}
\theta _{\phi \vee \hat{\phi}}=v_{0}...v_{n-1}v_{n}v_{n-1}...v_{0}.
\end{equation*}%
Using successively the transformations $aba\mapsto a$ and $aa\mapsto a$ of
Theorem \ref{p4.12}, we obtain that $\theta _{\phi \vee \hat{\phi}}\sim \ast
$ whence $\phi \vee \hat{\phi}\overset{C}{\simeq }e$ follows.
\end{proof}

\begin{theorem}
\label{t4.11}Let $G,H$ be digraphs.

$\left( i\right) $ The set $\pi _{1}(G^{\ast })$ with the product \emph{(\ref%
{4.19})} and neutral element $[e]$ from \emph{(\ref{e})} is a group. It will
be referred to as the fundamental group of a digraph $G^{\ast }$.

$\left( ii\right) $ A based digraph map $f\colon G^{\ast }\rightarrow
H^{\ast }$ induces a group homomorphism
\begin{equation*}
\pi _{1}(f):\pi _{1}(G^{\ast })\rightarrow \pi _{1}(H^{\ast }),\ \ \left(
\pi _{1}(f)\right) [\phi ]=[f\circ \phi ],
\end{equation*}%
which depends only on homotopy class of $f$. Hence, we obtain a functor from
the category of digraphs $\mathcal{D}^{\ast }$ to the category of groups.

$\left( iii\right) $ Let $\gamma :I_{k}^{\ast }\rightarrow G^{\ast }\ $be a
based path-map with $\gamma (k)=v.$ Then $\gamma $ induces an isomorphism of
fundamental groups
\begin{equation*}
\gamma _{\sharp }\colon \pi _{1}(G^{\ast })\rightarrow \pi _{1}(G^{v}),
\end{equation*}%
which depends only on $C$-homotopy class of the path-map $\gamma $.
\end{theorem}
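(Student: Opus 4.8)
The plan is to verify the group axioms in $(i)$ directly from Lemmas \ref{l4.9} and \ref{l4.10}, to establish $(ii)$ by recognizing $\pi_{1}=\pi_{0}\circ L$ and then proving homotopy invariance, and to build the change-of-basepoint isomorphism in $(iii)$ by conjugating with $\gamma$, the whole of $(iii)$ resting on the cancellation identities $\gamma\vee\hat\gamma\overset{C}{\simeq}e$ and $\hat\gamma\vee\gamma\overset{C}{\simeq}e$ furnished by the proof of Lemma \ref{l4.10}. For $(i)$ the product is well defined by Lemma \ref{l4.9}. Associativity is actually strict: for loops $\phi\colon I_{n}^{\ast}\to G^{\ast}$, $\psi\colon I_{m}^{\ast}\to G^{\ast}$, $\chi\colon I_{l}^{\ast}\to G^{\ast}$, both $(\phi\vee\psi)\vee\chi$ and $\phi\vee(\psi\vee\chi)$ are the very same digraph map on the same line digraph $I_{n+m+l}^{\ast}$, so the loops coincide before passing to classes. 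Since $e\colon I_{0}^{\ast}\to G^{\ast}$ is the one-point loop, $\phi\vee e=\phi=e\vee\phi$, so $[e]$ is a two-sided identity. Finally Lemma \ref{l4.10} gives $\phi\vee\hat\phi\overset{C}{\simeq}e$, and applying it to $\hat\phi$ together with the involution $\hat{\hat\phi}=\phi$ yields $\hat\phi\vee\phi\overset{C}{\simeq}e$; hence $[\hat\phi]=[\phi]^{-1}$ and $\pi_{1}(G^{\ast})$ is a group.

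For $(ii)$, since $\pi_{1}(G^{\ast})=\pi_{0}(LG^{\ast})$ and $Lf\colon LG^{\ast}\to LH^{\ast}$ (given by $\phi\mapsto f\circ\phi$) is a digraph map, it sends connected components of $LG^{\ast}$ to connected components of $LH^{\ast}$, so $\pi_{1}(f)[\phi]=[f\circ\phi]$ is well defined on $C$-homotopy classes. It is a homomorphism because $f\circ(\phi\vee\psi)=(f\circ\phi)\vee(f\circ\psi)$, and $\pi_{1}(\func{id})=\func{id}$, $\pi_{1}(g\circ f)=\pi_{1}(g)\circ\pi_{1}(f)$ are immediate, giving the functor. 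For the homotopy invariance it suffices, composing one-step homotopies, to treat a single step, so $f(x)\overrightarrow{=}g(x)$ for all $x$. For a based loop $\phi\colon I_{n}^{\ast}\to G^{\ast}$ the identity shrinking map $\func{id}\colon I_{n}\to I_{n}$ then provides a one-step direct $C$-homotopy from $f\circ\phi$ to $g\circ\phi$: the condition of Remark \ref{Remfipsi} reads $f(\phi(i))\overrightarrow{=}g(\phi(i))$, which holds, and the base-point requirement is met because the homotopy between $f$ and $g$ is based. Hence $f\circ\phi\overset{C}{\simeq}g\circ\phi$ and $\pi_{1}(f)=\pi_{1}(g)$.

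For $(iii)$ I would set $\gamma_{\sharp}[\phi]=[\hat\gamma\vee\phi\vee\gamma]$, which is a loop at $v$ because $\hat\gamma$ runs from $v$ to $\ast$ and $\gamma$ from $\ast$ to $v$. This is well defined on $[\phi]$ since a $C$-homotopy $\phi\overset{C}{\simeq}\phi'$ extends to the concatenation by the same extension argument as in Lemma \ref{l4.9}. That $\gamma_{\sharp}$ is a homomorphism and an isomorphism both follow from the cancellation identities: inserting $\gamma\vee\hat\gamma\overset{C}{\simeq}e$ into the middle of $\hat\gamma\vee\phi\vee\gamma\vee\hat\gamma\vee\psi\vee\gamma$ gives $\gamma_{\sharp}[\phi]\cdot\gamma_{\sharp}[\psi]=\gamma_{\sharp}([\phi]\cdot[\psi])$, while the inverse map is $(\hat\gamma)_{\sharp}\colon\pi_{1}(G^{v})\to\pi_{1}(G^{\ast})$, $[\psi]\mapsto[\gamma\vee\psi\vee\hat\gamma]$, the two composites collapsing to the identity by cancelling $\gamma\vee\hat\gamma$ and $\hat\gamma\vee\gamma$ at both ends.

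The step I expect to be the main obstacle is the final assertion, that $\gamma_{\sharp}$ depends only on the $C$-homotopy class of $\gamma$ (understood as a $C$-homotopy through based path-maps ending at $v$). Here I would reduce to a one-step $C$-homotopy $\gamma\overset{C}{\simeq}\gamma'$ with $\gamma'(k')=v$, first establish $\hat\gamma\vee\gamma'\overset{C}{\simeq}e$ and $\hat{\gamma'}\vee\gamma\overset{C}{\simeq}e$ (replacing $\gamma'$ by $\gamma$ up to $C$-homotopy and then applying Lemma \ref{l4.10}), and then sandwich
\[
\hat{\gamma'}\vee\phi\vee\gamma'\overset{C}{\simeq}(\hat{\gamma'}\vee\gamma)\vee(\hat\gamma\vee\phi\vee\gamma)\vee(\hat\gamma\vee\gamma')\overset{C}{\simeq}\hat\gamma\vee\phi\vee\gamma .
\]
The delicate point is precisely the bookkeeping of endpoints along the $C$-homotopy, since for non-loop path-maps the endpoint is not forced to be preserved; one must know that $\gamma$ and $\gamma'$ share the endpoint $v$ so that $\gamma_{\sharp}$ and $\gamma'_{\sharp}$ land in the same group $\pi_{1}(G^{v})$. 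Once the two cancellation relations are in place, the coincidence $\gamma_{\sharp}=\gamma'_{\sharp}$ is forced.
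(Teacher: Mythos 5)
Your proposal is correct and follows essentially the same route as the paper: associativity/neutral element are strict, inverses come from Lemma \ref{l4.9} and Lemma \ref{l4.10} exactly as in the paper's part $(i)$, the one-step homotopy reduction with the condition of Remark \ref{Remfipsi} is the paper's argument for $(ii)$, and the conjugation $\gamma_{\sharp}[\phi]=[\hat\gamma\vee\phi\vee\gamma]$ with $(\hat\gamma)_{\sharp}$ as inverse is the paper's construction for $(iii)$. The only cosmetic difference is in the final point of $(iii)$, where the paper directly extends the $C$-homotopy $\gamma_{1}\overset{C}{\simeq}\gamma_{2}$ to the concatenations (citing the extension argument of Lemma \ref{l4.9}), while you reach the same conclusion by a sandwich of cancellation identities that itself rests on that same extension argument.
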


\begin{proof}
$\left( i\right) $ This follows from Lemmas \ref{l4.9} and \ref{l4.10},
since the product in $\pi _{1}(G^{\ast })$ satisfies the associative law,
the class $[e]\in \pi _{1}(G^{\ast })$ satisfies the definition of a neutral
element, and $[\hat{\phi}]$ is the inverse of $\left[ \phi \right] $ for any
$\left[ \phi \right] \in \pi _{1}\left( G^{\ast }\right) $.

$\left( ii\right) $ Let $\phi $ and $\psi $ be $C$-homotopic loops in $%
G^{\ast }$. It follows from Definition \ref{d4.3} and (\ref{fipsih}) that $%
f\circ \phi \overset{C}{\simeq }f\circ \psi $ and, hence, the map $\pi
_{1}(f)$ is well defined.

The map $\pi _{1}(f)$ is a homomorphism because $\pi _{1}([e])=[e]$ and, for
any two loops $\phi ,\phi ^{\prime }$ in $G^{\ast }$,%
\begin{equation*}
f\circ (\phi \vee \phi ^{\prime })=\left( f\circ \phi \right) \vee \left(
f\circ \phi ^{\prime }\right) .
\end{equation*}%
If $f$ and $g$ two homotopic based maps from $G^{\ast }$ to $H^{\ast }$ then
$f\circ \phi \simeq g\circ \phi $ and hence $f\circ \phi \overset{C}{\simeq }%
g\circ \phi $, which finishes the proof.

$\left( iii\right) $ For any loop $\phi $ in $G^{\ast }$, define a based
loop $\gamma _{\sharp }(\phi )$ in $G^{v}$ by
\begin{equation*}
\gamma _{\sharp }(\phi )=\hat{\gamma}\vee \phi \vee \gamma
:I_{k+n+k}\rightarrow G,
\end{equation*}%
where $\hat{\gamma}$ is the inverse path-map of $\gamma $ as in Definition %
\ref{d4.1}. Similarly to the proof of $\left( ii\right) $ and using Lemma %
\ref{l4.10}, one shows that $\gamma _{\sharp }\colon \pi _{1}(G,\ast
)\rightarrow \pi _{1}(G,v)$ is a group homomorphism. Since $\hat{\gamma}%
_{\sharp }$ is obviously the inverse map of $\gamma _{\sharp }$, it follows
that $\gamma _{\sharp }$ is an isomorphism.

If $\gamma _{1}$ and $\gamma _{2}$ are two $C$-homotopic path-maps
connecting vertices $\ast $ and $v$ then $\hat{\gamma}_{1}\vee \phi \vee
\gamma _{1}$ and $\hat{\gamma}_{2}\vee \phi \vee \gamma _{2}$ are $C$%
-homotopic (cf. the proof of Lemma \ref{l4.9}). Hence, $\gamma _{\sharp }$
depends only on $C$-homotopy class of the map $\gamma $.
\end{proof}

\begin{lemma}
\label{Lemgfg}Let $f:G^{\ast }\rightarrow H^{a}$ and $g:G^{\ast }\rightarrow
H^{b}$ be two based digraphs maps. If $f\simeq g:G\rightarrow H$ then there
exists a based path-map $\gamma :I_{k}^{\ast }\rightarrow H^{a}$ with $%
\gamma \left( k\right) =b$ such that, for any loop $\phi :I_{n}^{\ast
}\rightarrow G^{\ast }$, we have%
\begin{equation}
\gamma _{\sharp }\left( f\circ \phi \right) \overset{C}{\simeq }g\circ \phi .
\label{gfg}
\end{equation}%
Consequently, the following diagram is commutative:%
\begin{equation*}
\begin{array}{ccc}
\pi _{1}\left( G^{\ast }\right) & \overset{\pi _{1}\left( f\right) }{%
\longrightarrow } & \pi _{1}\left( H^{a}\right) \\
\downarrow ^{\func{id}} &  & \ \downarrow ^{\gamma _{_{\sharp }}} \\
\pi _{1}\left( G^{\ast }\right) & \overset{\pi _{1}\left( g\right) }{%
\longrightarrow } & \pi _{1}\left( H^{b}\right)%
\end{array}%
\end{equation*}
\end{lemma}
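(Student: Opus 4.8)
The plan is to take $\gamma$ to be the track of the base point under the homotopy $f\simeq g$, and to extract the required $C$-homotopy from the grid obtained by pushing a loop through that homotopy. Concretely, by Definition \ref{d3.1} I first fix a line digraph $I_{N}$ and a digraph map $F\colon G\boxdot I_{N}\to H$ with $F|_{G\boxdot\{0\}}=f$ and $F|_{G\boxdot\{N\}}=g$, and define $\gamma\colon I_{N}\to H$ by $\gamma(j)=F(\ast,j)$. As the restriction of the digraph map $F$ to the copy $\{\ast\}\boxdot I_{N}$ of $I_{N}$, this $\gamma$ is a path-map with $\gamma(0)=f(\ast)=a$ and $\gamma(N)=g(\ast)=b$; thus $\gamma\colon I_{N}^{\ast}\to H^{a}$ is a based path-map with $\gamma(N)=b$ (so $k=N$), depending only on $F$ and not on any loop.

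Given a loop $\phi\colon I_{n}^{\ast}\to G^{\ast}$, I would then form the digraph map $\Phi=F\circ(\phi\boxdot\func{id}_{I_{N}})\colon I_{n}\boxdot I_{N}\to H$, which is a digraph map since both factors are. Its values on the boundary of the grid are $\Phi(i,0)=(f\circ\phi)(i)$, $\Phi(i,N)=(g\circ\phi)(i)$, and, because $\phi(0)=\phi(n)=\ast$, $\Phi(0,j)=\Phi(n,j)=\gamma(j)$; so the bottom edge carries $f\circ\phi$, the top edge carries $g\circ\phi$, and both vertical edges carry $\gamma$. The crux is the claim
$$(f\circ\phi)\vee\gamma\ \overset{C}{\simeq}\ \gamma\vee(g\circ\phi),$$
both sides being path-maps from $a$ to $b$.

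To establish this I would slide a monotone staircase across the grid one elementary square at a time. Reading $\Phi$ along the bottom-then-right staircase $(0,0)\to\cdots\to(n,0)\to\cdots\to(n,N)$ produces the word $\theta_{(f\circ\phi)\vee\gamma}$, and along the left-then-top staircase the word $\theta_{\gamma\vee(g\circ\phi)}$. Two adjacent staircases differ only by exchanging a corner $(i,j)\to(i+1,j)\to(i+1,j+1)$ for $(i,j)\to(i,j+1)\to(i+1,j+1)$; writing $p,q,r,s$ for the $\Phi$-images of the four corners of the square they bound, this replaces the subword $pqs$ by $prs$. Since $p,q,r,s$ are the image under a digraph map of a (possibly degenerate) square in $I_{n}\boxdot I_{N}$, the relation $pqs\sim prs$ is exactly relation (\ref{aabb}), proved with its complete analysis of genuine and degenerate squares inside the proof of Theorem \ref{p4.12}; the two orientations of the vertical edges are symmetric, using $\func{C}_{h}^{-}$ in place of $\func{C}_{h}$. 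Chaining these moves from one extreme staircase to the other and applying the word-to-$C$-homotopy direction of Theorem \ref{p4.12} (which that proof remarks is valid for arbitrary, not necessarily based, path-maps) gives the claim. I expect this square-filling step, especially the bookkeeping of degenerate squares and of edge orientations, to be the main obstacle.

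Finally I would assemble the statement. By Definition \ref{d4.1} and Theorem \ref{t4.11}$(iii)$ we have $\gamma_{\sharp}(f\circ\phi)=\hat\gamma\vee(f\circ\phi)\vee\gamma$, and prepending $\hat\gamma$ preserves $C$-homotopy (as in Lemma \ref{l4.9}); hence the claim yields
$$\gamma_{\sharp}(f\circ\phi)\ \overset{C}{\simeq}\ \hat\gamma\vee\gamma\vee(g\circ\phi).$$
The same computation as in Lemma \ref{l4.10} shows $\hat\gamma\vee\gamma\overset{C}{\simeq}e$, since the word $v_{N}\cdots v_{0}\cdots v_{N}$ of $\hat\gamma\vee\gamma$ collapses to $v_{N}=b$ by repeated use of transformations $(iv)$ and $(v)$; therefore $\hat\gamma\vee\gamma\vee(g\circ\phi)\overset{C}{\simeq}g\circ\phi$, and so $\gamma_{\sharp}(f\circ\phi)\overset{C}{\simeq}g\circ\phi$, which is (\ref{gfg}). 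Commutativity of the displayed diagram is then just the equality $\gamma_{\sharp}\circ\pi_{1}(f)=\pi_{1}(g)$ read off on $C$-homotopy classes of loops.
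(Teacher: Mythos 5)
Your proof is correct, but it takes a genuinely different route from the paper's. The paper reduces at once to the case of a \emph{one-step} homotopy ($f(x)\,\overrightarrow{=}\,g(x)$ for all $x$), takes $\gamma$ to be the single edge from $a$ to $b$, and then exhibits the required one-step direct $C$-homotopy explicitly: the shrinking $h:\hat I\vee I_n\vee I\rightarrow I_n$ collapsing the two appended endpoints, together with the pointwise verification $\gamma_{\sharp}(f\circ\phi)(i)\,\overrightarrow{=}\,(g\circ\phi)(h(i))$, which by the criterion (\ref{fipsih}) of Remark \ref{Remfipsi} immediately yields (\ref{gfg}); the general case then follows by chaining one-step homotopies (your $\gamma$, the track $j\mapsto F(\ast,j)$ of the base point, is exactly the concatenation of the paper's one-step $\gamma$'s). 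You instead handle an arbitrary homotopy $F:G\boxdot I_N\rightarrow H$ in one stroke, via the grid $\Phi=F\circ(\phi\boxdot\func{id}_{I_N})$ and the staircase-sliding argument, proving the stronger intermediate relation $(f\circ\phi)\vee\gamma\overset{C}{\simeq}\gamma\vee(g\circ\phi)$ through the word calculus of Theorem \ref{p4.12}. This works: $\phi\boxdot\func{id}_{I_N}$ is indeed a digraph map, each unit square of the grid maps to exactly the cylinder-square configuration analyzed in the proof of (\ref{aabb}) (degeneracies included), the word-to-$C$-homotopy direction of Theorem \ref{p4.12} is explicitly noted in the paper to hold for arbitrary (non-based) path-maps, and the words all begin with $a$ and end with $b$ throughout, so prepending $\hat\gamma$ as in Lemma \ref{l4.9} and collapsing $\hat\gamma\vee\gamma$ as in Lemma \ref{l4.10} is legitimate. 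Two points you correctly flag but should be aware are not literally in the paper: for downward-oriented vertical edges the needed move is the mirror of (\ref{aabb}) (same case analysis with $\func{C}_{h}^{-}$, and in the nondegenerate case a type-$(ii)$ move for the square with source and sink swapped), and the stability of $C$-homotopy under concatenation must be invoked for path-maps rather than loops. The trade-off: your argument is heavier (it imports the full degenerate-square bookkeeping of Theorem \ref{p4.12}) but is uniform in the number of homotopy steps and makes the geometric content of the homotopy square visible, whereas the paper's one-step reduction avoids the word calculus entirely, since (\ref{fipsih}) hands over the one-step direct $C$-homotopy in a two-line verification.
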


\begin{proof}
Note that $f\circ \phi $ is a loop in $H^{a}$ and $g\circ \phi $ is a loop
in $H^{b}$. It suffices to prove the statement in the case when $f$ and $g$
are related by an one-step homotopy, that is, $f\left( x\right)
\overrightarrow{=}g\left( x\right) $ for all $x\in V_{G}.$ In particular, we
have $a\overrightarrow{=}b$.

Consider the path-map $\gamma :I\rightarrow H$ given by $\gamma \left(
0\right) =a$ and $\gamma \left( 1\right) =b.$ Then the loop $\gamma _{\sharp
}\left( f\circ \phi \right) :\hat{I}\vee I_{n}\vee I\rightarrow H^{b}$ is
defined by%
\begin{equation*}
\gamma _{\sharp }\left( f\circ \phi \right) =\hat{\gamma}\vee \left( f\circ
\phi \right) \vee \gamma .
\end{equation*}%
Define shrinking $h:$ $\hat{I}\vee I_{n}\vee I\rightarrow I_{n}$ as follows:
$h$ on $I_{n}$ is identical, and the endpoints of $\hat{I}\vee I_{n}\vee I$
are mapped by $h$ to the corresponding endpoints of $I_{n}$:%
\begin{equation*}
\begin{array}{ccccccccc}
I_{n} &  &  & 0 & ... & ... & n &  &  \\
\ \uparrow ^{h} &  & \nearrow & \uparrow & ... & ... & \uparrow & \nwarrow &
\\
\hat{I}\vee I_{n}\vee I & -1 & \leftarrow & 0 & ... & ... & n & \rightarrow
& n+1%
\end{array}%
\end{equation*}%
where for convenience we enumerate the vertices of $\hat{I}\vee I_{n}\vee I$
as $\left\{ -1,0,...,n+1\right\} .$

Then we have, for $0\leq i\leq n,$
\begin{equation*}
\gamma _{\sharp }\left( f\circ \phi \right) \left( i\right) =f\left( \varphi
\left( i\right) \right) \overrightarrow{=}g\left( \phi \left( i\right)
\right) =\left( g\circ \varphi \right) \left( h\left( i\right) \right) ,
\end{equation*}%
for $i=-1$%
\begin{equation*}
\gamma _{\sharp }\left( f\circ \phi \right) \left( -1\right) =b=g\left(
\varphi \left( 0\right) \right) =\left( g\circ \varphi \right) \left(
h\left( -1\right) \right) ,
\end{equation*}%
and for $i=n+1$%
\begin{equation*}
\gamma _{\sharp }\left( f\circ \phi \right) \left( n+1\right) =b=g\left(
\varphi \left( n\right) \right) =\left( g\circ \varphi \right) \left(
h\left( n+1\right) \right) .
\end{equation*}%
Hence, for all $i$,%
\begin{equation*}
\gamma _{\sharp }\left( f\circ \phi \right) \left( i\right) \overrightarrow{=%
}\left( g\circ \varphi \right) \left( h\left( i\right) \right) ,
\end{equation*}%
which implies (\ref{gfg}) by (\ref{fipsih}).
\end{proof}

\begin{theorem}
\label{TGHpi}Let $G,H$ be two connected digraphs. If $G\simeq H$ then the
fundamental groups $\pi _{1}\left( G^{\ast }\right) $ and $\pi _{1}\left(
H^{\ast }\right) $ are isomorphic (for any choice of the based vertices).
\end{theorem}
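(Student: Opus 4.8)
The plan is to mimic the classical argument that a homotopy equivalence induces an isomorphism on fundamental groups; the only genuine twist is that the homotopies supplied by Definition \ref{t5.3} need not fix the base vertex, so the induced maps must be compared through the change-of-base-point isomorphisms of Theorem \ref{t4.11}$(iii)$ together with the comparison Lemma \ref{Lemgfg}. Fix a base vertex $\ast$ in $G$ and let $f,g$ be homotopy inverses as in (\ref{fGH})--(\ref{fg}). Put $a=f(\ast)$ and $c=g(a)=(g\circ f)(\ast)$. Regarding $f$ as a based map $G^{\ast}\to H^{a}$ and $g$ as a based map $H^{a}\to G^{c}$, functoriality (Theorem \ref{t4.11}$(ii)$) gives $\pi_{1}(g\circ f)=\pi_{1}(g)\circ\pi_{1}(f)$. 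Since $g\circ f\simeq\func{id}_{G}$, I would apply Lemma \ref{Lemgfg} to this pair of based maps (namely $g\circ f\colon G^{\ast}\to G^{c}$ and $\func{id}_{G}\colon G^{\ast}\to G^{\ast}$): it produces a path-map $\gamma_{1}$ from $c$ to $\ast$ whose induced change-of-base-point isomorphism $(\gamma_{1})_{\sharp}\colon\pi_{1}(G^{c})\to\pi_{1}(G^{\ast})$ satisfies
\[
(\gamma_{1})_{\sharp}\circ\pi_{1}(g)\circ\pi_{1}(f)=\pi_{1}(\func{id}_{G})=\func{id}_{\pi_{1}(G^{\ast})}.
\]
Because $(\gamma_{1})_{\sharp}$ is an isomorphism by Theorem \ref{t4.11}$(iii)$, the composite $\pi_{1}(g)\circ\pi_{1}(f)$ is an isomorphism; hence $\pi_{1}(f)$ (with base $\ast$) is injective and $\pi_{1}(g)$ (with base $a$) is surjective.

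Next I would run the symmetric argument using the other relation. Viewing $f$ as a based map $G^{c}\to H^{f(c)}$, functoriality gives $\pi_{1}(f\circ g)=\pi_{1}(f)\circ\pi_{1}(g)$, and from $f\circ g\simeq\func{id}_{H}$ Lemma \ref{Lemgfg} yields a change-of-base-point isomorphism $(\gamma_{2})_{\sharp}$ with
\[
(\gamma_{2})_{\sharp}\circ\pi_{1}(f)\circ\pi_{1}(g)=\func{id}_{\pi_{1}(H^{a})},
\]
so $\pi_{1}(f)\circ\pi_{1}(g)$ is an isomorphism and therefore $\pi_{1}(g)$ (with base $a$) is injective. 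The key observation is that the map $\pi_{1}(g)\colon\pi_{1}(H^{a})\to\pi_{1}(G^{c})$ appearing in both relations is \emph{the same} map (both use base vertex $a$, since $g(a)=c$). Combining the two halves, this $\pi_{1}(g)$ is both surjective and injective, hence an isomorphism. Feeding this back into the first relation, $\pi_{1}(f)=\pi_{1}(g)^{-1}\circ(\gamma_{1})_{\sharp}^{-1}$ is an isomorphism $\pi_{1}(G^{\ast})\to\pi_{1}(H^{a})$. Finally, since $H$ is connected, Theorem \ref{t4.11}$(iii)$ gives $\pi_{1}(H^{a})\cong\pi_{1}(H^{\ast})$ for any chosen base vertex of $H$, and likewise the freedom to choose the base vertex of $G$ follows the same way, so $\pi_{1}(G^{\ast})\cong\pi_{1}(H^{\ast})$ for all choices of base points.

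The only point requiring care, and the main obstacle, is the bookkeeping of base vertices: because the homotopies in (\ref{fg}) move the base point, the two occurrences of ``$\pi_{1}(f)$'' live over different base vertices ($\ast$ and $c$) and cannot be identified a priori. This is precisely why I would first extract that the single map $\pi_{1}(g)$ over the base vertex $a$ is an isomorphism (surjectivity from one relation, injectivity from the other) and only then deduce that $\pi_{1}(f)$ over $\ast$ is an isomorphism; attempting to conclude directly from a single relation that some $\pi_{1}(f)$ is invertible would illegitimately mix incompatible base points. Everything else---functoriality and the fact that each $\gamma_{\sharp}$ is an isomorphism---is supplied verbatim by Theorem \ref{t4.11} and Lemma \ref{Lemgfg}, so no further combinatorial work with $C$-homotopies is needed.
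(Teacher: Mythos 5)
Your proposal is correct and takes essentially the same route as the paper: the paper's proof simply applies Lemma \ref{Lemgfg} to the two relations $g\circ f\simeq \func{id}_{G}$ and $f\circ g\simeq \func{id}_{H}$ and then cites Spanier for the ``standard argument,'' which is exactly the base-point bookkeeping you spell out (showing $\pi_{1}(g)$ at base vertex $a$ is both surjective and injective, then recovering $\pi_{1}(f)$ and using Theorem \ref{t4.11}$(iii)$ with connectedness to move base points). Your write-up just makes explicit the details the paper delegates to the citation.
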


\begin{proof}
Let $f:G\rightarrow H$ and $g:H\rightarrow G$ be homotopy inverses maps (cf. %
\ref{fg}). Applying Lemma \ref{Lemgfg} to $f\circ g\simeq \func{id}_{G}$ and
to $g\circ f\simeq \func{id}_{H}$, we obtain the result  by a standard
argument (cf. \cite[Ch.1, Thm 8]{Spanier}).
\end{proof}

\subsection{Relation between $H_{1}$ and $\protect\pi _{1}$}

One of our main results is the following theorem.

\begin{theorem}
\label{t4.13}For any based connected digraph $G^{\ast }$ we have an
isomorphism
\begin{equation*}
\pi _{1}(G^{\ast })\left/ [\pi _{1}(G^{\ast }),\pi _{1}(G^{\ast })]\right.
\cong H_{1}(G,\mathbb{Z})
\end{equation*}%
where $[\pi _{1}(G^{\ast }),\pi _{1}(G^{\ast })]$ is a commutator subgroup.
\end{theorem}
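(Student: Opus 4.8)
The plan is to construct an explicit homomorphism $h\colon\pi _{1}(G^{\ast })\to H_{1}(G,\mathbb{Z})$ that sends a loop to the $1$-cycle it traces out, and then to identify its kernel with the commutator subgroup by producing an inverse on the abelianization.

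First I would attach to each loop $\phi \colon I_{n}^{\ast }\to G^{\ast }$, with word $\theta _{\phi }=v_{0}v_{1}\dots v_{n}$ and $v_{0}=v_{n}=\ast $, the $1$-chain
\[
\tau (\phi )=\sum_{i:\,v_{i}\to v_{i+1}}e_{v_{i}v_{i+1}}-\sum_{i:\,v_{i+1}\to v_{i}}e_{v_{i+1}v_{i}}\in \Omega _{1}(G,\mathbb{Z}),
\]
steps with $v_{i}=v_{i+1}$ contributing nothing (recall $\Omega _{1}=\mathcal{A}_{1}$). Since $\partial e_{ab}=e_{b}-e_{a}$ irrespective of the orientation of the edge, the $i$-th step contributes $e_{v_{i+1}}-e_{v_{i}}$ to $\partial \tau (\phi )$, so the sum telescopes to $e_{v_{n}}-e_{v_{0}}=0$; thus $\tau (\phi )$ is a cycle and determines a class in $H_{1}(G,\mathbb{Z})$. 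To see this class depends only on $[\phi ]$ I would invoke Theorem \ref{p4.12}: it suffices to check that each word-transformation $(i)$–$(v)$ alters $\tau $ by a boundary. Moves $(iv)$ and $(v)$ leave $\tau $ unchanged, move $(i)$ changes it by $\pm \partial e_{ijk}$ for the triangle $2$-path, and moves $(ii)$,$(iii)$ by $\pm \partial (e_{ijk}-e_{imk})$ for the square $2$-path — precisely the $\partial $-invariant generators of Proposition \ref{pijk}. Because $\tau (\phi \vee \psi )=\tau (\phi )+\tau (\psi )$ (the words juxtapose), the resulting $h([\phi ])=[\tau (\phi )]$ is a homomorphism, and as $H_{1}$ is abelian it factors through $\overline{h}\colon \pi _{1}(G^{\ast })/[\pi _{1},\pi _{1}]\to H_{1}(G,\mathbb{Z})$.

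For the inverse I would use connectedness of $G$ to fix a based path-map $\gamma _{v}$ from $\ast $ to each vertex $v$ (with $\gamma _{\ast }=e$), and for every edge $i\to j$ set $g_{ij}:=\gamma _{i}\vee \lbrack i\to j]\vee \hat{\gamma}_{j}$, a based loop. Define $\ell \colon \Omega _{1}\to \pi _{1}(G^{\ast })/[\pi _{1},\pi _{1}]$ on generators by $\ell (e_{ij})=[g_{ij}]$ and extend $\mathbb{Z}$-linearly. The key identity is the telescoping
\[
[\phi ]=\ell (\tau (\phi ))\quad \text{in }\pi _{1}(G^{\ast })/[\pi _{1},\pi _{1}],\qquad \text{for every loop }\phi ,
\]
obtained by inserting the contractible loops $\hat{\gamma}_{v_{k}}\vee \gamma _{v_{k}}$ (Lemmas \ref{l4.10} and \ref{l4.9}) between consecutive steps, which rewrites $[\phi ]$ as the product over $k$ of $[\gamma _{v_{k}}\vee (\text{step }k)\vee \hat{\gamma}_{v_{k+1}}]$; each factor equals $[g_{v_{k}v_{k+1}}]$, $[g_{v_{k+1}v_{k}}]^{-1}$, or $[e]$, matching the corresponding term of $\tau (\phi )$.

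Finally I would show $\ell \circ \partial |_{\Omega _{2}}=0$: by Proposition \ref{pijk} it is enough to treat the three generators, and for each the corresponding concatenation of the $g$'s is $C$-homotopic to a triangular, square, or double-edge loop, all of which are $C$-contractible by Theorem \ref{p4.12}. This yields the relations $[g_{ij}][g_{jk}]=[g_{ik}]$, $[g_{ij}][g_{jk}]=[g_{im}][g_{mk}]$, and $[g_{ji}]=[g_{ij}]^{-1}$, so $\ell $ descends to $\lambda \colon H_{1}(G,\mathbb{Z})\to \pi _{1}(G^{\ast })/[\pi _{1},\pi _{1}]$. The telescoping identity is exactly $\lambda \circ \overline{h}=\func{id}$, while $\overline{h}\circ \lambda =\func{id}$ follows from computing $\sum c_{ij}\tau (g_{ij})=z+\sum_{v}\big(\textstyle\sum_{v\to j}c_{vj}-\sum_{i\to v}c_{iv}\big)\tau (\gamma _{v})$ for a cycle $z=\sum c_{ij}e_{ij}$, where the correction vanishes because $\partial z=0$. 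Hence $\overline{h}$ is an isomorphism. I expect the main obstacle to be the two bookkeeping steps that carry the geometry: verifying that every $C$-homotopy move shifts $\tau $ by a boundary, and that $\ell $ kills $\partial \Omega _{2}$ — that is, that the boundaries of the three model $2$-paths of Proposition \ref{pijk} are traced by genuinely $C$-contractible loops.
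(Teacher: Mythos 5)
Your proposal is correct, and in the decisive half it takes a genuinely different route from the paper. Your $\tau$ is exactly the paper's $\chi$ from (\ref{4.27}), and both arguments rest on Proposition \ref{pijk} and Theorem \ref{p4.12}, but the proofs diverge after that: the paper $\left( a\right)$ establishes $C$-homotopy invariance of $\left[ \chi \left( \phi \right) \right]$ not move-by-move but by invoking homotopy invariance of homology (Theorem \ref{t3.4}), regarding $\phi$ and $\psi \circ h$ as homotopic digraph maps $S_{n}\rightarrow G$ and pushing forward the standard class $\left[ \varpi \right]$; $\left( b\right)$ proves surjectivity directly, decomposing any closed $1$-path into standard paths and conjugating by connecting path-maps; and $\left( c\right)$ proves $\func{Ker}\chi _{\ast }\subset \lbrack \pi _{1},\pi _{1}]$ by induction on the number $N$ of generators in a decomposition $\chi \left( \phi \right) =\partial \omega =\sum_{j}\kappa _{j}\partial \sigma _{j}$, stripping one generator per step via a word move and the sign identity $\chi \left( \phi ^{\prime }\right) =\chi \left( \phi \right) -\kappa _{l}\partial \sigma _{l}$, the base case $\chi \left( \phi \right) =0$ being handled by the same telescoping insertion of connecting paths $\psi _{i}$ that you use. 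You instead construct a two-sided inverse $\lambda$ on the abelianization (the classical Hurewicz-in-degree-one argument): this buys surjectivity and injectivity simultaneously, avoids the induction and the sign bookkeeping of (\ref{xiab}), and elevates the paper's base-case telescoping to the global identity $\lambda \circ \overline{h}=\func{id}$; the price is the two well-definedness checks, which you correctly isolate and which do go through, including your computation verifying $\overline{h}\circ \lambda =\func{id}$ via $\partial z=0$.

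One wrinkle needs fixing in your move-by-move invariance check. With a double edge $a\leftrightarrows b$ in $G$, your word-level formula for $\tau$ is ambiguous: a step with $v_{i}=a$, $v_{i+1}=b$ contributes $+e_{ab}$ or $-e_{ba}$ according to the direction of the edge in $I_{n}$, not according to the word, and as written both of your sums would capture that step (breaking the telescoping of $\partial \tau$). Correspondingly, move $\left( iv\right)$ does \emph{not} always leave $\tau$ unchanged: if the loop runs from $a$ to $b$ and back along two forward edges of $I_{n}$, erasing $ba$ changes $\tau$ by $\mp \partial e_{aba}$; the same boundary accounts for loops with identical words on different line digraphs (the situation of Lemma \ref{l5.17}). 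Since $e_{aba}$ is precisely the double-edge generator of Proposition \ref{pijk}, the change is still a boundary and your argument survives verbatim once the double edge is listed alongside the triangle and the square in the invariance check — just as you already list it in the verification that $\ell$ annihilates $\partial \Omega _{2}$. The paper sidesteps this bookkeeping entirely by deriving invariance from Theorem \ref{t3.4}, which is worth noting as the cleaner option if one has homotopy invariance of $H_{1}$ available.
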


\begin{proof}
The proof is similar to that in the classical algebraic topology \cite[p.166]%
{Hatcher}. For any based loop $\phi \colon I_{n}^{\ast }\rightarrow G^{\ast
} $ of a digraph $G^{\ast }$, define a $1$-path $\chi (\phi )$ on $G$ as
follows: $\chi (\phi )=0$ for $n=0,1,2$, and for $n\geq 3$
\begin{equation}
\chi \left( \phi \right) =\sum_{\left\{ i:i\rightarrow i+1\right\} }e_{\phi
(i)\phi (i+1)}-\sum_{\left\{ i:i+1\rightarrow i\right\} }e_{\phi (i+1)\phi
(i)},  \label{4.27}
\end{equation}%
where the summation index $i$ runs from $0$ to $n-1$. It is easy to see that
the $1$-path $\chi \left( \phi \right) $ is allowed and closed and, hence,
determines a homology class $\left[ \chi \left( \phi \right) \right] \in
H_{1}\left( G,\mathbb{Z}\right) $. Let us first prove that, for any two
based loops $\phi \colon I_{n}^{\ast }\rightarrow G^{\ast }$ and $\psi
\colon I_{m}^{\ast }\rightarrow G^{\ast }$,
\begin{equation}
\phi \overset{C}{\simeq }\psi \ \ \Rightarrow \ \left[ \chi (\phi )\right] =%
\left[ \chi (\psi )\right] .  \label{4.28}
\end{equation}%
Note that any based loop with $n\leq 2$ is $C$-homotopic to trivial. For $%
n\geq 3$, it is sufficiently to check (\ref{4.28}) assuming that $\phi
\overset{C}{\simeq }\psi $ is given by an one-step direct $C$-homotopy with
a shrinking map $h:I_{n}^{\ast }\rightarrow I_{m}^{\ast }$. Set
\begin{equation*}
\phi ^{\prime }:=\psi \circ h:I_{n}^{\ast }\rightarrow G^{\ast }
\end{equation*}%
and observe that by (\ref{4.27}) $\chi \left( \phi ^{\prime }\right) =\chi
\left( \psi \right) .$ It remains to show that $\left[ \chi \left( \phi
\right) \right] =\left[ \chi \left( \phi ^{\prime }\right) \right] .$

By Remark \ref{Remfipsi} the digraph maps $\phi $ and $\phi ^{\prime }$,
acting from $I_{n}$ to $G$, are homotopic. Denote by $S_{n}$ the digraph
that is obtained from $I_{n}$ by identification of the vertices $0$ and $n$
(cf. Example \ref{e2.8}). Then $\varphi $ and $\phi ^{\prime }$ can be
regarded as digraph maps from $S_{n}$ to $G$, and they are again homotopic
as such.

Consider the standard homology class $\left[ \varpi \right] \in H_{1}\left(
S_{n}\right) $ given by (\ref{om}). Comparing (\ref{om}) and (\ref{4.27}),
we see that%
\begin{equation*}
\phi _{\ast }\left( \varpi \right) =\chi \left( \varphi \right) \ \ \text{%
and\ \ }\phi _{\ast }^{\prime }\left( \varpi \right) =\chi \left( \phi
^{\prime }\right) .
\end{equation*}%
On the other hand, by Theorem \ref{t3.4} we have $\left[ \phi _{\ast }\left(
\varpi \right) \right] =\left[ \phi _{\ast }^{\prime }\left( \varpi \right) %
\right] $, which finishes the proof of (\ref{4.28}).

Hence, $\chi $ determines a map
\begin{equation*}
\chi _{\ast }\colon \pi _{1}(G^{\ast })\rightarrow H_{1}(G,\mathbb{Z}),\ \ \
\chi _{\ast }[\phi ]=[\chi (\phi )].
\end{equation*}%
The map $\chi _{\ast }$ is a group homomorphism because, for based loops $%
\phi ,\psi $ and the neutral element $[e]\in \pi _{1}\left( G^{\ast }\right)
$, we have $\chi _{\ast }([e])=0$ and
\begin{eqnarray*}
\chi _{\ast }([\phi ]\cdot \lbrack \psi ]) &=&\chi _{\ast }([\phi \vee \psi
])=[\chi (\phi \vee \psi )] \\
&=&[\chi (\phi )+\chi (\psi )]=[\chi (\phi )]+[\chi (\psi )]=\chi _{\ast
}([\phi ])+\chi _{\ast }([\psi ]).
\end{eqnarray*}%
Since the group $H_{1}(G,\mathbb{Z})$ is abelian, it follows that
\begin{equation*}
\lbrack \pi _{1}(G^{\ast }),\pi _{1}(G^{\ast })]\subset \func{Ker}\chi
_{\ast }.
\end{equation*}

Now let us prove that $\chi _{\ast }$ is an epimorphism. Define a \emph{%
standard loop} on $G$ as a finite sequence $v=\left\{ v_{k}\right\}
_{k=0}^{n}$ of vertices of $G$ such that $v_{0}=v_{n}$ and, for any $k$ $%
=0,...,n-1$, either $v_{k}\rightarrow v_{k+1}$ or $v_{k+1}\rightarrow v_{k}.$
For a standard loop $v$ define an $1$-path%
\begin{equation}
\varpi _{v}=\sum_{\left\{ k:v_{k}\rightarrow v_{k+1}\right\}
}e_{v_{k}v_{k+1}}-\sum_{\left\{ k:v_{k+1}\rightarrow v_{k}\right\}
}e_{v_{k}v_{k+1}}  \label{omv}
\end{equation}%
and observe that $\varpi _{v}$ is allowed and closed. The $1$-paths of the
form (\ref{omv}) will be referred to as standard paths. Consider an
arbitrary closed $1$-path
\begin{equation*}
w=\sum_{k}n_{k}e_{i_{k}j_{k}}\in \Omega _{1}(G,\mathbb{Z}).
\end{equation*}%
Since $\partial w=0$ and $\partial e_{ij}=e_{j}-e_{i}$, the path $w$ can be
represented as a finite sum of standard paths. Hence, in order to prove that
$\chi _{\ast }$ is an epimorphism, it suffices to show that any standard $1$%
-path $\varpi _{v}$ is in the image of $\chi $. Note that the standard loop $%
v$ determines naturally a based loop $\phi :I_{n}^{\ast }\rightarrow
G^{v_{0}}$ by $\phi \left( i\right) =v_{i}$. Since the digraph $G$ is
connected, there exists a based path $f:I_{s}^{\ast }\rightarrow G^{\ast }$
with $f(s)=v_{0}$. Thus we obtain a based loop
\begin{equation*}
f\vee \phi \vee \hat{f}:I_{2s+n}^{\ast }\rightarrow G^{\ast }.
\end{equation*}%
It follows directly from our construction, that $\chi (f\vee \phi \vee \hat{f%
})=\varpi _{v}$, and hence $\chi _{\ast }$ is an epimorphism.

We are left to prove that
\begin{equation*}
\func{Ker}\chi _{\ast }\subset \lbrack \pi _{1}(G^{\ast }),\pi _{1}(G^{\ast
})].
\end{equation*}%
For that we need to prove that, for any loop $\phi :I_{n}^{\ast }\rightarrow
G^{\ast }$, if $\chi _{\ast }([\phi ])=0\in H_{1}(G,\mathbb{Z})$, then $%
\left[ \phi \right] $ lies in the commutator $[\pi _{1}(G^{\ast }),\pi
_{1}(G^{\ast })]$. In the case $n\leq 2$ any loop $\phi $ is $C$-homotopic
to the trivial loop. Assuming in the sequel $n\geq 3$, we use the word $%
\theta _{\phi }=v_{0}v_{1}...v_{n}$ where $v_{i}=\phi \left( i\right) $.

Consider first the case, when $\chi (\phi )=0\in \Omega _{1}(G)$. Since the
digraph $G$ is connected, for any vertex $v_{i}$ there exists a based
path-map $\psi _{i}\colon I_{p_{i}}^{\ast }\rightarrow G^{\ast }\ \ \text{%
with}\ \ \psi _{i}(p_{i})=v_{i}$. If $v_{i}=v_{j}$ for some $i,j$ then we
make sure to choose $\psi _{i}$ and $\psi _{j}$ identical. For $i=0$ and $%
i=n $ choose $\psi _{i}$ to be trivial path-map $e:I_{0}^{\ast }\rightarrow
G^{\ast }$. For any $i=0,...,n-1$ define path-map $\phi _{i}\colon I^{\pm
}\rightarrow G$ by the conditions $\phi _{i}(0)=v_{i},\phi _{i}(1)=v_{i+1}$
and consider the following loop%
\begin{equation}
\gamma =\psi _{0}\vee \phi _{0}\vee \hat{\psi}_{1}\vee \psi _{1}\vee \phi
_{1}\vee \hat{\psi}_{2}\vee \psi _{2}\vee \phi _{2}\vee \dots \vee \hat{\psi}%
_{n-1}\vee \psi _{n-1}\vee \phi _{n-1}\vee \psi _{n}  \label{4.29}
\end{equation}%
(see Fig. \ref{pic12}).\FRAME{ftbpFU}{3.0822in}{1.938in}{0pt}{\Qcb{Loop $%
\protect\psi _{i}\vee \protect\phi _{i}\vee \hat{\protect\psi}_{i+1}$}}{\Qlb{%
pic12}}{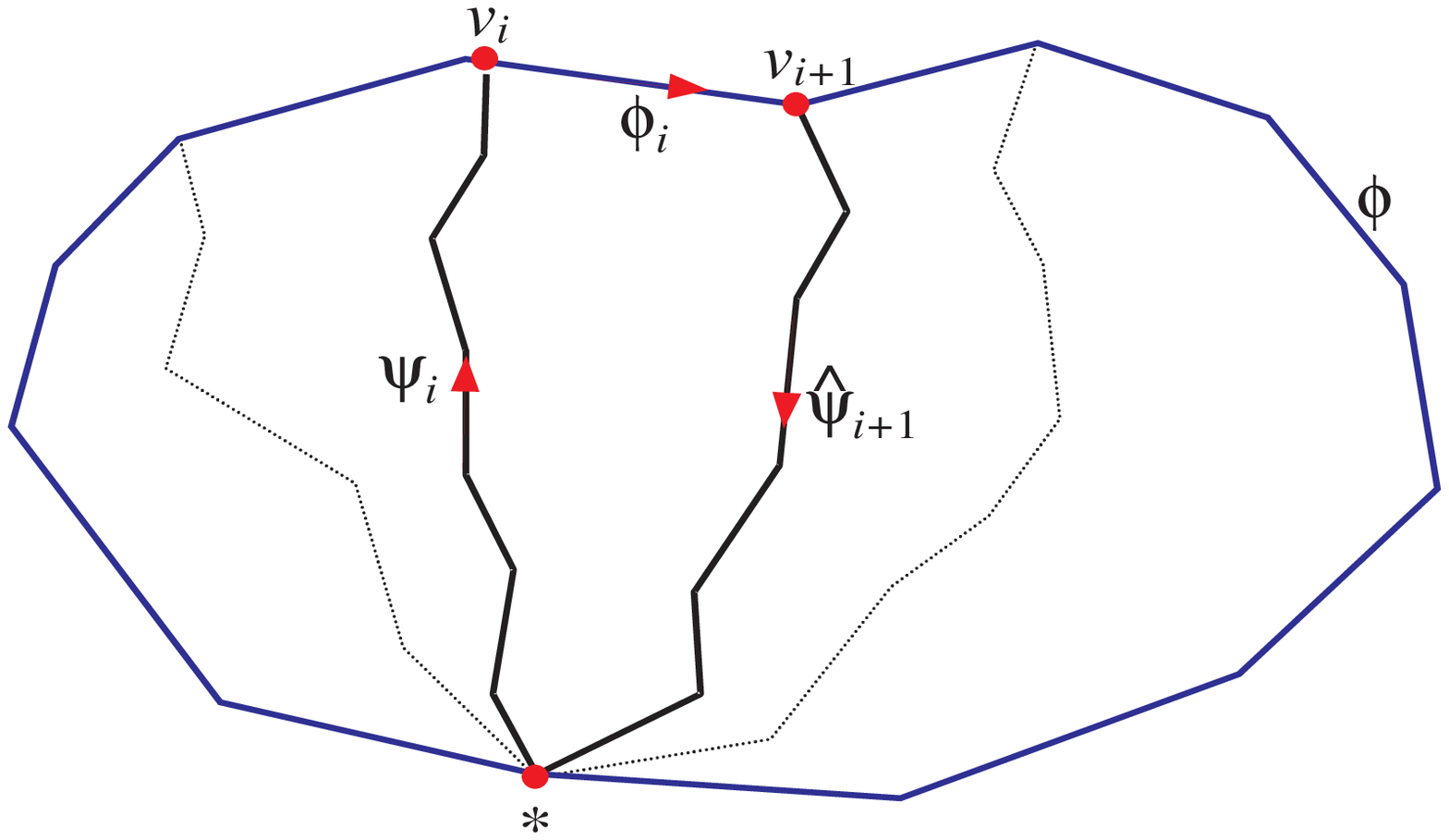}{\special{language "Scientific Word";type
"GRAPHIC";maintain-aspect-ratio TRUE;display "USEDEF";valid_file "F";width
3.0822in;height 1.938in;depth 0pt;original-width 6.3607in;original-height
3.9894in;cropleft "0";croptop "1";cropright "1";cropbottom "0";filename
'pic12.eps';file-properties "XNPEU";}}

Using transformation $\left( iv\right) $ of Theorem \ref{p4.12} (similarly
to the proof of Lemma \ref{l4.10}), we obtain that
\begin{equation*}
\gamma \overset{C}{\simeq }\phi _{0}\vee \phi _{1}\vee ...\vee \phi
_{n-1}=\phi .
\end{equation*}
On the other hand, it follows from (\ref{4.29}) that%
\begin{equation*}
\left[ \gamma \right] =\prod_{i=0}^{n-1}\left[ \psi _{i}\vee \phi _{i}\vee
\hat{\psi}_{i+1}\right]
\end{equation*}%
Consider for some $i=0,...,n-1$, such that $i\rightarrow i+1$, the vertices $%
a=v_{i}$ and $b=v_{i+1}$. If $a=b$ then the loop $\psi _{i}\vee \phi
_{i}\vee \hat{\psi}_{i+1}$ is $C$-homotopic to $e$. Assume $a\neq b$, so
that $a\rightarrow b$. Then the term $e_{ab}$ is present in the right hand
side of the identity (\ref{4.27}) defining $\chi \left( \phi \right) $. Due
to $\chi \left( \phi \right) =0$, the term $e_{ab}$ should cancel out with $%
-e_{ab}$ in the right hand side of (\ref{4.27}). Therefore, there exists $%
j=0,...,n-1$ such that $j+1\rightarrow j$, $v_{j+1}=a$ and $v_{j}=b$. It
follows that
\begin{equation*}
\psi _{j}\vee \phi _{j}\vee \hat{\psi}_{j+1}=\psi _{i+1}\vee \hat{\phi}%
_{i}\vee \hat{\psi}_{i},
\end{equation*}%
and that the loops%
\begin{equation}
\left[ \psi _{i}\vee \phi _{i}\vee \hat{\psi}_{i+1}\right] \text{ and\ }%
\left[ \psi _{j}\vee \phi _{j}\vee \hat{\psi}_{j+1}\right]  \label{4.30}
\end{equation}%
are mutually inverse. Therefore, $\left[ \gamma \right] $ is a product of
pairs of mutually inverse loops, which implies that $\left[ \gamma \right] =%
\left[ \phi \right] $ lies in the commutator of $\pi _{1}$.

Now consider the general case, when $\chi \left( \phi \right) \in \Omega
_{1}\left( G\right) $ is exact, that is, $\chi (\phi )=\partial \omega $ for
some$\ \omega \in \Omega _{2}(G).$ Recall that by Proposition \ref{pijk} any
$2$-path $\omega \in \Omega _{2}$ can be represented in the form%
\begin{equation*}
\omega =\sum_{j=1}^{N}\kappa _{j}\sigma _{j}
\end{equation*}%
where $N\in \mathbb{N}$, $\kappa _{l}=\pm 1$ and $\sigma _{l}$ is one of the
following $2$-paths: a double edge, a triangle, a square. Further proof goes
by induction in $N$. In the case $N=0$ we have $\omega =0$ which was already
considered above.

In the case $N\geq 1$ choose an arbitrary index $i=0,...,n-1$ such that the
vertices $a=\phi \left( i\right) $ and $b=\phi \left( i+1\right) $ are
distinct. Assume for certainty that $i\rightarrow i+1$ and, hence, $%
a\rightarrow b$ (the case $i+1\rightarrow i$ can be handled similarly). Then
$e_{ab}$ enters $\chi \left( \phi \right) $ with the coefficient $1$. Since%
\begin{equation*}
\chi \left( \phi \right) =\partial \omega =\sum_{j=1}^{N}\kappa _{j}\partial
\sigma _{j},
\end{equation*}%
there exists $\sigma _{l}$ such that $\partial \sigma _{l}$ contains a term $%
\kappa _{l}e_{ab}$. Fix this $l$ and define a new loop $\phi ^{\prime }$ as
follows.

If $\sigma _{l}$ is a double edge $a,b,a$, then consider a loop $\phi
^{\prime }$ that is obtained from $\phi :I_{n}^{\ast }\rightarrow G^{\ast }$
by changing one edge $i\rightarrow i+1$ in $I_{n}$ to $i\rightarrow i+1$.
Then by Lemma \ref{l5.17} we have $\phi ^{\prime }\overset{C}{\simeq }\phi $.

Let $\sigma _{l}$ be a triangle with the vertices $a,b,c$. Noticing that%
\begin{equation*}
\theta _{\phi }=...ab...
\end{equation*}%
consider a loop $\phi ^{\prime }$ such that%
\begin{equation*}
\theta _{\phi ^{\prime }}=...acb...
\end{equation*}%
(see Fig. \ref{pic13}).\FRAME{ftbpFU}{2.4915in}{1.7798in}{0pt}{\Qcb{Loops $%
\protect\phi $ and $\protect\phi ^{\prime }$ in the case when $\protect%
\sigma _{l}$ is a triangle.}}{\Qlb{pic13}}{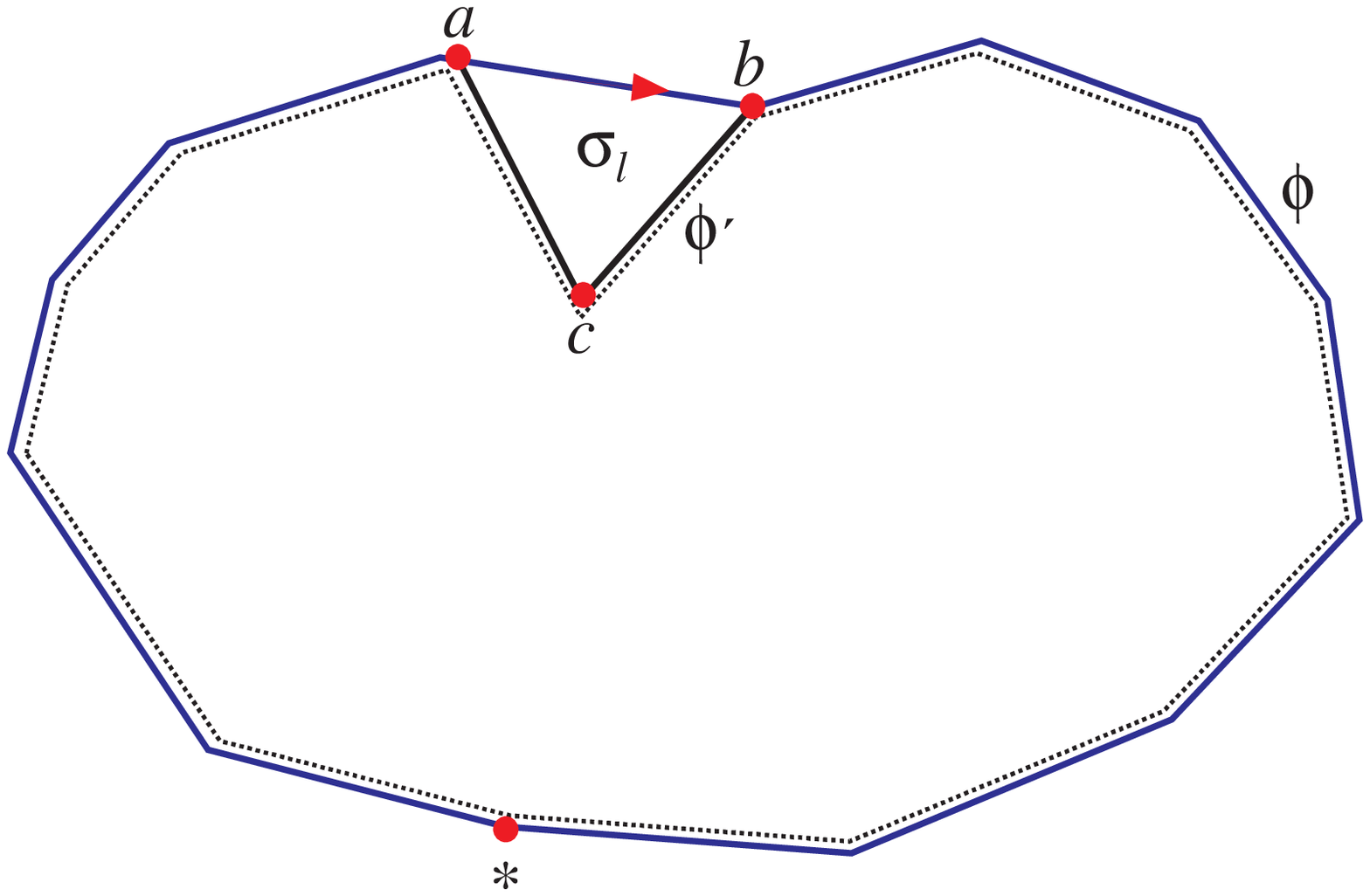}{\special{language
"Scientific Word";type "GRAPHIC";maintain-aspect-ratio TRUE;display
"USEDEF";valid_file "F";width 2.4915in;height 1.7798in;depth
0pt;original-width 6.3183in;original-height 4.5031in;cropleft "0";croptop
"1";cropright "1";cropbottom "0";filename 'pic13.eps';file-properties
"XNPEU";}}

If $\sigma _{l}$ is a square with the vertices $a,b,c,d$, then we define a
loop $\phi ^{\prime }$ so that%
\begin{equation*}
\theta _{\phi ^{\prime }}=...adcb.
\end{equation*}%
By Theorem \ref{p4.12}, we have in the both cases $\phi ^{\prime }\overset{C}%
{\simeq }\phi $ and, hence, $\left[ \phi ^{\prime }\right] =\left[ \phi %
\right] $.

By construction, $\chi \left( \phi ^{\prime }\right) $ contains no longer
the term $e_{ab}$. On the other hand, we will prove below that, for some $%
\kappa =\pm 1$,
\begin{equation}
\chi \left( \phi ^{\prime }\right) =\chi \left( \phi \right) -\kappa
\partial \sigma _{l}.  \label{xiab}
\end{equation}%
Comparing the coefficients in front of $e_{ab}$ in the both parts of (\ref%
{xiab}), we obtain the identity $0=1-\kappa \kappa _{l}$ whence $\kappa
=\kappa _{l}$. It follows from (\ref{xiab}) with $\kappa =\kappa _{l}$ that%
\begin{equation*}
\chi \left( \phi ^{\prime }\right) =\chi \left( \phi \right) -\partial
\left( \kappa _{l}\sigma _{l}\right) =\partial \omega -\partial \left(
\kappa _{l}\sigma _{l}\right) =\partial \omega ^{\prime },
\end{equation*}%
where%
\begin{equation*}
\omega ^{\prime }=\sum_{j\neq l}c_{j}\sigma _{j}.
\end{equation*}%
By the inductive hypothesis we conclude that $\left[ \phi ^{\prime }\right] $
lies in the commutator $[\pi _{1}(G^{\ast }),\pi _{1}(G^{\ast })]$, whence
the same for $\left[ \phi \right] $ follows.

We are left to prove the identity (\ref{xiab}). If $\sigma _{l}$ is a double
edge $a,b,a$ then%
\begin{equation*}
\chi \left( \phi ^{\prime }\right) -\chi \left( \phi \right)
=-e_{ba}-e_{ab}=-\partial e_{aba}=-\partial \sigma _{l}.
\end{equation*}%
If $\sigma _{l}$ is a triangle
\begin{equation*}
\begin{matrix}
& c &  \\
& \diagup \ \ \diagdown &  \\
a & \longrightarrow & b%
\end{matrix}%
\end{equation*}%
then we obtain a cycle digraph $S_{3}$ with the vertices $a,b,c$, and if $%
\sigma _{l}$ is a square
\begin{equation*}
\begin{array}{ccc}
d & \longrightarrow & c \\
\ | &  & |\  \\
a & \longrightarrow & b%
\end{array}%
\end{equation*}%
then we obtain a cycle digraph $S_{4}$ with the vertices $a,b,c,d.$ Let $%
\varpi $ be the standard $1$-path on $S_{3}$ in the first case and that on $%
S_{4}$ in the second case (see (\ref{om})). Then it is easy to see that%
\begin{equation*}
\chi \left( \phi \right) -\chi \left( \phi ^{\prime }\right) =\varpi ,
\end{equation*}%
and (\ref{xiab}) follows from the observation that $\partial \sigma _{l}=\pm
\varpi $ (cf. Example \ref{e2.8}).
\end{proof}

\subsection{Higher homotopy groups}

\label{SecHigher}Recall that, for any based digraph $G^{\ast }$, a based
loop-digraph $LG^{\ast }$ was defined in Definition \ref{d4.4}, and, for a
digraph map $f:G^{\ast }\rightarrow H^{\ast },$ we defined a digraph map $%
Lf:LG^{\ast }\rightarrow LH^{\ast }$ by (\ref{Lf}).

\begin{definition}
\label{d4.14}\RM For any digraph $G^{\ast }$ let $L^{n}G=L^{n}G^{\ast
},n=0,1,2,3,\dots $ be based digraphs defined inductively as
\begin{equation*}
L^{0}G^{\ast }=G^{\ast },\ \ L^{1}G^{\ast }=LG^{\ast },\ \ \text{ and, for $%
n\geq 2$, }\ \ \ L^{n}G^{\ast }\overset{def}{=}L\left( L^{n-1}G^{\ast
}\right)
\end{equation*}%
where the base point in $LG^{\ast }$ is the based map $I_{0}^{\ast
}\rightarrow G^{\ast }$ which we also denote by $\ast $.

For $n\geq 2$, define \emph{homotopy group }$\pi _{n}(G^{\ast })$ of the
digraph $G^{\ast }$ inductively by
\begin{equation*}
\pi _{n}(G^{\ast })=\pi _{n-1}(LG^{\ast }).
\end{equation*}
\end{definition}

\begin{theorem}
\label{t4.15}Let $G^{\ast },H^{\ast }$ be two based digraphs. If $f$ and $g~$%
are homotopic digraph maps $G^{\ast }\rightarrow H^{\ast }$ then $Lf$ and $%
Lg $ are homotopic digraph maps $LG^{\ast }\rightarrow LH^{\ast }$. If $%
G^{\ast }\simeq H^{\ast }$ then also $LG^{\ast }\simeq LH^{\ast }$.
\end{theorem}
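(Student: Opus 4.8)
The plan is to prove the first assertion (that $f\simeq g$ forces $Lf\simeq Lg$) and then deduce the second (homotopy equivalence) formally from it via functoriality of $L$.

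For the first assertion I would reduce to a one-step homotopy and then show directly that $Lf$ and $Lg$ are themselves one-step homotopic. Since $f\simeq g$ means there is a finite chain $f=f_0,f_1,\dots,f_N=g$ of maps $G^{\ast}\rightarrow H^{\ast}$ with consecutive members one-step homotopic, and since homotopy of digraph maps is an equivalence relation, it suffices to treat the case where $f$ and $g$ are one-step homotopic. By \eqref{f=g} (together with the base-point requirement) we may assume, say, $f(x)\,\overrightarrow{=}\,g(x)$ for all $x\in V_{G}$, the opposite direction being symmetric. I would then fix an arbitrary vertex of $LG^{\ast}$, that is a loop $\phi\colon I_{n}^{\ast}\rightarrow G^{\ast}$, and compare the two loops $(Lf)(\phi)=f\circ\phi$ and $(Lg)(\phi)=g\circ\phi$ in $LH^{\ast}$, both defined on the same line digraph $I_{n}$.

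The key observation is that the identity shrinking map $h=\func{id}_{I_{n}}$ provides a one-step direct $C$-homotopy from $f\circ\phi$ to $g\circ\phi$: by Remark \ref{Remfipsi} and \eqref{fipsih} this amounts to checking $(f\circ\phi)(i)\,\overrightarrow{=}\,(g\circ\phi)(i)$ for all $i$, i.e. $f(\phi(i))\,\overrightarrow{=}\,g(\phi(i))$, which is exactly our hypothesis with $x=\phi(i)$. By the edge rule of the loop-digraph (Definition \ref{d4.4}) this means that either $f\circ\phi=g\circ\phi$ as vertices of $LH^{\ast}$, or there is an edge $f\circ\phi\rightarrow g\circ\phi$ in $LH^{\ast}$; in either case $(Lf)(\phi)\,\overrightarrow{=}\,(Lg)(\phi)$. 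Since this holds with a uniform direction for every loop $\phi$, the maps $Lf$ and $Lg$ satisfy \eqref{f=g} and are therefore one-step homotopic. The required base-point condition is automatic, because $f$ and $g$ are based, whence $(Lf)(e)=(Lg)(e)=e$ for the trivial loop $e$ serving as base point of the loop-digraphs.

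For the second assertion, suppose $G^{\ast}\simeq H^{\ast}$ via based maps $f\colon G^{\ast}\rightarrow H^{\ast}$ and $g\colon H^{\ast}\rightarrow G^{\ast}$ with $f\circ g\simeq\func{id}_{H^{\ast}}$ and $g\circ f\simeq\func{id}_{G^{\ast}}$. Since $L$ is a functor (so that $L(f\circ g)=Lf\circ Lg$ and $L(\func{id})=\func{id}$, as follows from \eqref{Lf}), applying the first assertion to each of these homotopies yields $Lf\circ Lg\simeq\func{id}_{LH^{\ast}}$ and $Lg\circ Lf\simeq\func{id}_{LG^{\ast}}$. Hence $Lf$ and $Lg$ are homotopy inverses and $LG^{\ast}\simeq LH^{\ast}$. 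I do not expect a serious obstacle: the only care needed is in correctly translating the one-step relation $f(x)\,\overrightarrow{=}\,g(x)$ into an edge (or equality) in $LH^{\ast}$ through the identity shrinking, and in checking the base-point condition. The heart of the matter is simply that the one-step homotopy condition is vertex-wise, so it holds in particular at every value $\phi(i)$, which is precisely the datum recorded by an edge of $LH^{\ast}$.
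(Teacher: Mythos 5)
Your proposal is correct and follows essentially the same route as the paper: reduce to a one-step homotopy $f(x)\,\overrightarrow{=}\,g(x)$, observe that this gives $(Lf)(\phi)\,\overrightarrow{=}\,(Lg)(\phi)$ for every loop $\phi$ (via the identity shrinking, i.e.\ the one-step $C$-homotopy of Remark \ref{Remfipsi}), and deduce the homotopy-equivalence statement from functoriality of $L$. Your treatment is in fact slightly more explicit than the paper's on two points it leaves implicit --- the verification of condition (\ref{fipsih}) and the base-point check $(Lf)(e)=(Lg)(e)=e$ --- but the argument is the same.
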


\begin{proof}
In the first statement, it suffices to consider the case of one-step
homotopy between $f$ and $g$, which by (\ref{f=g}) amounts to either $%
f\left( x\right) \overrightarrow{=}g\left( x\right) \ $for\ all\ $x\in
V_{G}\ $or\ \ $g\left( x\right) \overrightarrow{=}f\left( x\right) \ $for\
all\ $x\in V_{G}$. Assume without loss of generality that
\begin{equation*}
f\left( x\right) \overrightarrow{=}g\left( x\right) \ \text{for\ all}\ x\in
V_{G}.
\end{equation*}%
Then, for any loop $\psi \in LG^{\ast }$, $\psi :I_{n}^{\ast }\rightarrow
G^{\ast }$, we have also%
\begin{equation*}
f\left( \psi \left( i\right) \right) \overrightarrow{=}g\left( \psi \left(
i\right) \right) \ \text{for all }i=0,...,n,
\end{equation*}%
which implies that $f\circ \psi $ and $g\circ \psi $ are one-step homotopic
and, hence, one-step $C$-homotopic. Therefore, the loops $f\circ \psi $ and $%
g\circ \psi $ as elements of $LH^{\ast }$ are either identical or connected
by an edge in $LH^{\ast }$, that is
\begin{equation*}
\left( Lf\right) \left( \psi \right) \overrightarrow{=}\left( Lg\right)
\left( \psi \right) \ \ \text{for all }\psi \in V_{LG}.
\end{equation*}%
Hence, $Lf\simeq Lg$, which finishes the proof of the first statement.

Since $L$ is a functor we obtain the proof of the rest part of the Theorem.
\end{proof}

\begin{corollary}
\label{c4.16} For $n\geq 0$, the functor $\pi _{n}$ is well defined on the
homotopy category of based digraphs.
\end{corollary}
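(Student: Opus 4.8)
The plan is to prove the two ingredients of the assertion simultaneously by induction on $n$: first, that each $\pi_n$ is a functor from $\mathcal{D}^{\ast}$ to groups (with based sets in place of groups when $n=0$), and second, that homotopic based maps induce equal morphisms under $\pi_n$. Once the second property is in hand, $\pi_n$ automatically factors through the quotient functor $\mathcal{D}^{\ast}\rightarrow \mathrm{Ho}(\mathcal{D}^{\ast})$ onto the homotopy category, whose morphisms are the homotopy classes of based digraph maps; this factorization is precisely the meaning of ``$\pi_n$ is well defined on the homotopy category''.

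The base cases are already available. For $n=0$, Proposition \ref{p4.6} asserts that $\pi_0$ is a functor to based sets and that homotopic maps induce the same map of based sets. For $n=1$, Theorem \ref{t4.11}$(ii)$ asserts that $\pi_1$ is a functor to groups and that $\pi_1(f)$ depends only on the homotopy class of $f$. In both cases $\pi_n$ sends homotopic maps to equal morphisms, which is exactly the condition needed.

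For the inductive step I fix $n\geq 2$ and assume the statement for $\pi_{n-1}$. By Definition \ref{d4.14} one has $\pi_n=\pi_{n-1}\circ L$, both on objects and, for a based map $f$, by the rule $\pi_n(f)=\pi_{n-1}(Lf)$. Since $L$ is a functor $\mathcal{D}^{\ast}\rightarrow\mathcal{D}^{\ast}$ (established after Definition \ref{d4.4}) and $\pi_{n-1}$ is a functor to groups by the inductive hypothesis, $\pi_n$ is a composition of functors and hence a functor. To check homotopy invariance, suppose $f\simeq g\colon G^{\ast}\rightarrow H^{\ast}$. Then Theorem \ref{t4.15} gives $Lf\simeq Lg\colon LG^{\ast}\rightarrow LH^{\ast}$, and applying the inductive hypothesis for $\pi_{n-1}$ yields $\pi_{n-1}(Lf)=\pi_{n-1}(Lg)$, that is $\pi_n(f)=\pi_n(g)$, completing the induction.

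As a formal corollary, homotopy equivalent based digraphs have isomorphic $\pi_n$: if $f\colon G^{\ast}\rightarrow H^{\ast}$ and $g\colon H^{\ast}\rightarrow G^{\ast}$ satisfy $f\circ g\simeq\func{id}_{H^{\ast}}$ and $g\circ f\simeq\func{id}_{G^{\ast}}$, then functoriality together with the homotopy invariance just proved give $\pi_n(f)\circ\pi_n(g)=\pi_n(\func{id})=\func{id}$ and symmetrically, so $\pi_n(f)$ and $\pi_n(g)$ are mutually inverse isomorphisms. I expect no genuine obstacle here: the entire difficulty has already been absorbed into Theorem \ref{t4.15} (whose proof ultimately rests on the local description of $C$-homotopy in Theorem \ref{p4.12}) and into the base case $n=1$, so the induction itself is purely formal. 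The only point that demands attention is the bookkeeping of base points in the iterated loop-digraphs $L^{n}G^{\ast}$, which is already fixed once and for all by the choice of base point $\ast$ in Definition \ref{d4.14}.
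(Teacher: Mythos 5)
Your proof is correct and follows exactly the route the paper intends: the corollary is stated without a separate proof precisely because it is meant to follow by induction from Definition \ref{d4.14} ($\pi_n=\pi_{n-1}\circ L$), with base cases supplied by Proposition \ref{p4.6} and Theorem \ref{t4.11}$(ii)$ and the inductive step by Theorem \ref{t4.15}. Your additional observations (factoring through the quotient to the homotopy category, and the base-point bookkeeping being handled by Definition \ref{d4.14}) are accurate and fill in the details the paper leaves implicit.
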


\begin{remark}
\RM The definition of higher homotopy groups $\pi _{n}\left( G^{\ast
}\right) $ depends crucially on how we define edges in the loop-digraph $%
LG^{\ast }.$ Our present definition uses for that one-step $C$-homotopy.
There may be other definitions of edges in $LG^{\ast }$, for example, one
could use for that the transformations of Theorem \ref{p4.12}. By switching
to the latter (or any other reasonable) definition of $LG^{\ast }$, the set
of connected components of $LG^{\ast }$ remains unchanged, so that $\pi
_{1}\left( G^{\ast }\right) =\pi _{0}\left( LG^{\ast }\right) $ is
unchanged, but $\pi _{1}\left( LG^{\ast }\right) $ and, hence, $\pi
_{2}\left( G^{\ast }\right) $ may become different. At present it is not
quite clear what is the most natural choice of edges in $LG^{\ast }$. We
plan to return to this question in the future research.
\end{remark}

\section{Application to graph coloring}

\label{SecSperner}\setcounter{equation}{0}An an illustration of the theory
of digraph homotopy, we give here a new proof of the classical lemma of
Sperner, using the notion the fundamental group and $C$-homotopy.

Consider a triangle $ABC$ on the plane $\mathbb{R}^{2}$ and its
triangulation $T$. The set of vertices of $T$ is colored with three colors $%
1,2,3$ in such a way that

\begin{itemize}
\item the vertices $A,B,C$ are colored with $1,2,3$ respectively;

\item each vertex on any side of $ABC$ is colored with one of the two colors
of the endpoints of the side (see Fig. \ref{pic7}).
\end{itemize}

\FRAME{ftbpFU}{4.2073in}{1.8498in}{0pt}{\Qcb{A Sperner coloring}}{\Qlb{pic7}%
}{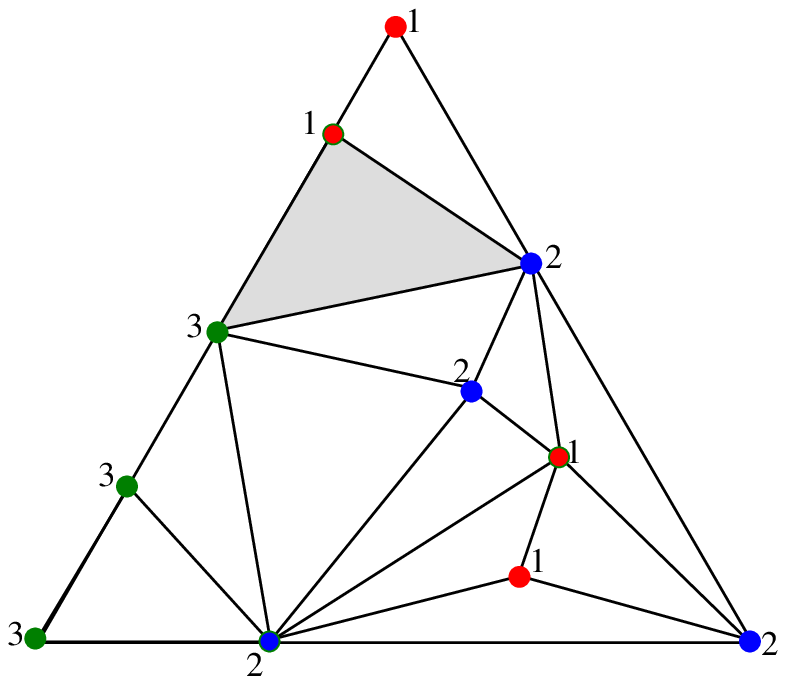}{\special{language "Scientific Word";type
"GRAPHIC";maintain-aspect-ratio TRUE;display "USEDEF";valid_file "F";width
4.2073in;height 1.8498in;depth 0pt;original-width 6.3027in;original-height
2.7492in;cropleft "0";croptop "1";cropright "1";cropbottom "0";filename
'pic7.eps';file-properties "XNPEU";}}

The classical lemma of Sperner says that then there exists in $T$ a $3$%
-color triangle, that is, a triangle, whose vertices are colored with the
three different colors.

To prove this, let us first modify the triangulation $T$ so that there are
no vertices on the sides $AB,AC,BC$ except for $A,B,C.$ Indeed, if $X$ is a
vertex on $AB$ then we move $X$ a bit inside the triangle $ABC.$ This gives
rise to a new triangle in the triangulation $T$ that is formed by $X$ and
its former neighbors, say $Y$ and $Z$, on the edge $AB$ (while keeping all
other triangles). However, since all $X,Y,Z$ are colored with two colors, no
$3$-color triangle emerges after that move. By induction, we remove all the
vertices from the sides of $ABC.$

The triangulation $T$ can be regarded as a graph. Let us make it into a
digraph $G$ by choosing the direction on the edges as follows. If the
vertices $a,b$ are connected by an edge in $T$ then choose direction between
$a,b$ using the colors of $a,b$ and the following rule:%
\begin{equation}
\begin{array}{ccc}
1\rightarrow 2, & 2\rightarrow 3, & 3\rightarrow 1 \\
1\leftrightarrows 1, & 2\leftrightarrows 2, & 3\leftrightarrows 3%
\end{array}
\label{123}
\end{equation}%
Assume now that there is no $3$-color triangle in $T.$ Then each triangle
from $T$ looks in $G$ like
\begin{equation*}
\begin{matrix}
& \bullet  &  \\
& \nearrow \ \ \nwarrow  &  \\
\bullet  & \leftrightarrows  & \bullet
\end{matrix}%
\ \ \text{or\ \ \ }%
\begin{matrix}
& \bullet  &  \\
& \swarrow \ \ \searrow  &  \\
\bullet  & \leftrightarrows  & \bullet
\end{matrix}%
\ \ \text{or\ \ }%
\begin{matrix}
& \bullet  &  \\
& \nearrow \swarrow \ \ \searrow \nwarrow  &  \\
\bullet  & \leftrightarrows  & \bullet
\end{matrix}%
,
\end{equation*}%
in particular, each of them contains a triangle in the sense of Theorem \ref%
{p4.12}. Using the transformations $\left( ii\right) $ and $\left( iv\right)
$ of Theorem \ref{p4.12} and the partition of $G$ into the triangles, we
contract any loop on $G$ to an empty word (cf. Fig. \ref{pic13}), whence $%
\pi _{1}\left( G^{\ast }\right) =\left\{ 0\right\} $.

Consider now a colored cycle $S_{3}$
\begin{equation}
\begin{matrix}
& 1 &  \\
& \nearrow \ \ \searrow  &  \\
3 & \longleftarrow  & 2%
\end{matrix}
\label{T123}
\end{equation}%
and the following two maps: $f:G\rightarrow S_{3}$ that preserves the colors
of the vertices and $g:S_{3}\rightarrow G$ that maps the vertices $1,2,3$ of
$S_{3}$ onto $A,B,C,$ respectively. Both $f,g$ are digraph maps, which for
the case of $f$ follows from the choice (\ref{123}) of directions of the
edges of $G$. Since $f\circ g=\func{id}_{S_{3}}$, we obtain that $\pi
_{1}\left( f\circ g\right) =\pi _{1}\left( f\right) \circ \pi _{1}\left(
g\right) $ is an isomorphism of $\pi _{1}\left( S_{3}\right) \simeq \mathbb{Z%
}$ onto itself, which is not possible by $\pi _{1}\left( G^{\ast }\right)
=\left\{ 0\right\} .$

\section{Homology and homotopy of (undirected) graphs}

\label{S5}\setcounter{equation}{0}

A homotopy theory of undirected graphs was constructed in \cite{Babson} and
\cite{Barcelo} (see also \cite{Dochtermann}). Here we show that this theory
can be obtained from our homotopy theory of digraphs as restriction to a
full subcategory. The same restriction enables us to define a homotopy
invariant homology theory of undirected graphs such that the classical
relation between fundamental group and the first homology group given by
Theorem \ref{t4.13} is preserved. In particular, the so obtained homology
theory for graphs answers a question raised in \cite[p.32]{Babson}.

To distinguish digraphs (see Definition \ref{d2.1}) and (undirected) graphs
(see Definition \ref{d3.1} below) we use the following notations. To denote
a digraph and its sets of vertices and edges, we use as in the previous
sections the standard font as $G=(V_{G},E_{G}).$ To denote a graph and its
sets of vertices and edges, we will use a bold font, for example, $\mathbf{G}%
=(\mathbf{V}_{\mathbf{G}},\mathbf{E}_{\mathbf{G}})$. The bold font will also
be used to denoted the maps between graphs.

\begin{definition}
\label{d5.1}\RM$\left( i\right) $ A\emph{\ graph } $\mathbf{G}=(\mathbf{V}_{%
\mathbf{G}},\mathbf{E}_{\mathbf{G}})$ is a couple of a set $\mathbf{V}_{%
\mathbf{G}}$ of \emph{vertices} and a subset $\mathbf{E}_{\mathbf{G}}\subset
\{\mathbf{V}_{\mathbf{G}}\times \mathbf{V}_{\mathbf{G}}\setminus {\func{diag}%
}\}$ of non-ordered pairs of vertices that are called \emph{edges}. Any edge
$\left( v,w\right) \in \mathbf{E}_{\mathbf{G}}$ will be also denoted by $%
v\sim w$.

$\left( ii\right) $ A\emph{\ morphism }from a graph\emph{\ }$\mathbf{G}%
=\left( \mathbf{V}_{\mathbf{G}},\mathbf{E}_{\mathbf{G}}\right) $\emph{\ }to
a graph\emph{\ }$\mathbf{H}=\left( \mathbf{V}_{\mathbf{H}},\mathbf{E}_{%
\mathbf{H}}\right) $ is a map
\begin{equation*}
\mathbf{f}\colon \mathbf{V}_{\mathbf{G}}\rightarrow \mathbf{V}_{\mathbf{H}}
\end{equation*}%
such that for any edge $v\sim w$ on $\mathbf{G}$ we have either $\mathbf{f}%
\left( v\right) =\mathbf{f}\left( w\right) $ or $\mathbf{f}\left( v\right)
\sim \mathbf{f}\left( w\right) $. We will refer to morphisms of graphs as
\emph{\ graph maps}.
\end{definition}

To each graph $\mathbf{G}=(\mathbf{V}_{\mathbf{G}},\mathbf{E}_{\mathbf{G}})$
we associate a digraph\emph{\ }$G=(V_{G},E_{G})$ where $V_{G}=\mathbf{V}_{%
\mathbf{G}}$ and $E_{G}$ is defined by the condition $v\rightarrow
w\Leftrightarrow v\sim w.$ Clearly, the digraph $G$ satisfies the condition $%
w\rightarrow v\Leftrightarrow v\rightarrow w$. Any digraph with this
property will be called a \emph{double} digraph.

The set of all graphs with graph maps forms a category (which was also
introduced by \cite{Babson} and \cite{Barcelo}), that will be denoted by $%
\mathcal{G}$.

The assignment $\mathbf{G}\mapsto G$ and a similar assignment $\mathbf{f}%
\mapsto f$ of maps, that is well defined, provide a functor $\mathcal{O}$
from $\mathcal{G}$ to $\mathcal{D}$. It is clear that the image $\mathcal{O}$
is a full subcategory $\mathcal{O}(\mathcal{G})$ of $\mathcal{D}$ that
consists of double digraphs, such that the inverse functor $\mathcal{O}%
^{-1}\colon \mathcal{O}(\mathcal{G})\rightarrow \mathcal{G}$ is well defined.

\begin{definition}
\label{d2.3g}\RM For two graphs $\mathbf{G}=(\mathbf{V}_{\mathbf{G}},\mathbf{%
E}_{\mathbf{G}})$ and $\mathbf{H}=\mathbf{(V}_{\mathbf{H}}\mathbf{,E}_{%
\mathbf{H}}\mathbf{)}$ define the \emph{Cartesian product} $\mathbf{G}%
\boxdot \mathbf{H}$ as a digraph with the set of vertices $\mathbf{V}_{%
\mathbf{G}}\times \mathbf{V}_{\mathbf{H}}$ and with the set of edges as
follows: for $x,x^{\prime }\in \mathbf{V}_{\mathbf{G}}\ $and$\ \ y,y^{\prime
}\in \mathbf{V}_{\mathbf{H}}$, we have $(x,y)\sim (x^{\prime },y^{\prime })\
$in $\mathbf{G}\boxdot \mathbf{H}$ if and only if
\begin{equation*}
\text{either}\ \ x^{\prime }=x\text{ and}\ y\sim y^{\prime }\text{,}\ \
\text{or}\ \ x\sim x^{\prime }\ \text{and}\ \ y=y^{\prime }.
\end{equation*}
\end{definition}

The comparison of Definitions \ref{d2.3} and \ref{d2.3g} yields the
following statement.

\begin{lemma}
\label{l5.3} The functors $\mathcal{O}$ and $\mathcal{O}^{-1}$ preserve the
product $\boxdot $, that is
\begin{equation*}
\mathcal{O}(\mathbf{G}\boxdot \mathbf{H})=G\boxdot H,\ \ \mathcal{O}%
^{-1}(G\boxdot H)=\mathbf{G}\boxdot \mathbf{H}.
\end{equation*}
\end{lemma}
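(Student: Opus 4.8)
The plan is to verify both identities by directly comparing the edge-conditions in Definitions \ref{d2.3} and \ref{d2.3g}, since neither functor alters vertex sets. First I would dispose of the vertices: both $\mathcal{O}(\mathbf{G}\boxdot \mathbf{H})$ and $G\boxdot H$ have vertex set $\mathbf{V}_{\mathbf{G}}\times \mathbf{V}_{\mathbf{H}}=V_{G}\times V_{H}$, so the entire content of the lemma is the matching of edges.

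For the first identity I would fix vertices $(x,y),(x^{\prime },y^{\prime })$ and unwind both sides. By the definition of $\mathcal{O}$, an edge $(x,y)\rightarrow (x^{\prime },y^{\prime })$ exists in $\mathcal{O}(\mathbf{G}\boxdot \mathbf{H})$ if and only if $(x,y)\sim (x^{\prime },y^{\prime })$ in the graph $\mathbf{G}\boxdot \mathbf{H}$, which by Definition \ref{d2.3g} means that either $x=x^{\prime }$ and $y\sim y^{\prime }$, or $x\sim x^{\prime }$ and $y=y^{\prime }$. On the other side, by Definition \ref{d2.3} an edge $(x,y)\rightarrow (x^{\prime },y^{\prime })$ exists in $G\boxdot H$ if and only if either $x=x^{\prime }$ and $y\rightarrow y^{\prime }$ in $H$, or $x\rightarrow x^{\prime }$ in $G$ and $y=y^{\prime }$. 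Invoking the defining property of $\mathcal{O}$, namely $u\rightarrow u^{\prime }\Leftrightarrow u\sim u^{\prime }$ for both $G$ and $H$, I would rewrite this latter condition and observe that it is literally the former. Hence the two digraphs carry identical edge sets, giving $\mathcal{O}(\mathbf{G}\boxdot \mathbf{H})=G\boxdot H$.

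The second identity I would then deduce from the first by applying $\mathcal{O}^{-1}$, and this is where the only genuinely substantive point lies: one must check that $G\boxdot H$ belongs to the image $\mathcal{O}(\mathcal{G})$, i.e. is a \emph{double} digraph, so that $\mathcal{O}^{-1}$ is defined on it. This follows from the symmetry of the edge condition. Since $G$ and $H$ are themselves double digraphs, we have $y\rightarrow y^{\prime }\Leftrightarrow y^{\prime }\rightarrow y$ and $x\rightarrow x^{\prime }\Leftrightarrow x^{\prime }\rightarrow x$, so the condition defining an edge $(x,y)\rightarrow (x^{\prime },y^{\prime })$ in $G\boxdot H$ is invariant under interchanging $(x,y)$ with $(x^{\prime },y^{\prime })$; thus $(x,y)\rightarrow (x^{\prime },y^{\prime })\Leftrightarrow (x^{\prime },y^{\prime })\rightarrow (x,y)$, confirming that $G\boxdot H$ is a double digraph. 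With this membership established, applying $\mathcal{O}^{-1}$ to the first identity and using $\mathcal{O}^{-1}\circ \mathcal{O}=\func{id}$ yields $\mathbf{G}\boxdot \mathbf{H}=\mathcal{O}^{-1}(G\boxdot H)$. Beyond this bookkeeping there is no real obstacle; the verification that $G\boxdot H\in \mathcal{O}(\mathcal{G})$, which legitimizes passing from the first identity to the second, is the single step that deserves attention.
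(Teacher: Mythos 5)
Your proposal is correct and follows the same route as the paper, which offers no written proof beyond the remark that comparing Definitions \ref{d2.3} and \ref{d2.3g} yields the statement: your unwinding of the two edge conditions via $u\rightarrow u'\Leftrightarrow u\sim u'$ is exactly that comparison made explicit. Your one addition --- checking that $G\boxdot H$ is a double digraph so that $\mathcal{O}^{-1}$ is actually defined on it --- is a point the paper leaves implicit, and you handle it correctly via the symmetry of the edge condition.
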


By definition, a \emph{\ line graph} is a graph $\mathbf{J}_{n}=(\mathbf{V},%
\mathbf{E})$ with $\mathbf{V}=\{0,1,\dots ,n\}$ and $\mathbf{E}=\{k\sim
k+1|0\leq k\leq n-1\}$. Let $\mathbf{J}=\{0\sim 1\}$ be the line graph with
two vertices. Let $J_{n}=\mathcal{O}(\mathbf{J}_{n})$ and $J=\mathcal{O}(%
\mathbf{J})$.

\begin{definition}
\label{d5.2} \RM\cite{Barcelo} Let $\mathbf{G},\mathbf{H}$ be two graphs.

$\left( i\right) $ Two graph maps $\mathbf{f},\mathbf{g}\colon \mathbf{G}%
\rightarrow \mathbf{H}\ $ are called \emph{homotopic} if there exists a line
graph $\mathbf{J}_{n}\ (n\geq 0)$ and a graph map $\mathbf{F}\colon \mathbf{G%
}\boxdot \mathbf{J}_{n}\rightarrow \mathbf{H}$ such that
\begin{equation*}
\mathbf{F}|_{\mathbf{G}\boxdot \{0\}}=\mathbf{f}_{0}\text{ and }\ \mathbf{F}%
|_{\mathbf{G}\boxdot \{n\}}=\mathbf{f}_{1}
\end{equation*}%
In this case we shall write $\mathbf{f}\simeq \mathbf{g}$.

$\left( ii\right) $ The graphs $\mathbf{G}$ and $\mathbf{H}$ are\emph{\ }%
called \emph{homotopy equivalent} if there exist graph maps $\mathbf{f}%
\colon \mathbf{G}\rightarrow \mathbf{H}$ and$\ \mathbf{g}\colon \mathbf{H}%
\rightarrow \mathbf{G}$ such that
\begin{equation}
\mathbf{f}\circ \mathbf{g}\simeq \func{id}_{\mathbf{H}},\ \ \ \mathbf{g}%
\circ \mathbf{f}\simeq \func{id}_{\mathbf{G}}.  \label{fgb}
\end{equation}%
In this case we shall write $\mathbf{H}\simeq \mathbf{G}$. The maps $\mathbf{%
f}$ and $\mathbf{g}$ are as in (\ref{fgb}) called \emph{homotopy inverses}
of each other.
\end{definition}

The relation \textquotedblright $\simeq $" is an equivalence relation on the
set of graph maps and on the set of graphs (see \cite{Barcelo}).

\begin{proposition}
\label{t5.4}Let $\mathbf{f},\mathbf{g}\colon \mathbf{G}\rightarrow \mathbf{H}
$ be graph maps. The maps $\mathbf{f}$ and $\mathbf{g}$ are homotopic if and
only if the digraph maps $f=\mathcal{O}(\mathbf{f})$ and $g=\mathcal{O}(%
\mathbf{g})$ are homotopic.
\end{proposition}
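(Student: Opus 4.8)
The plan is to reduce both homotopy relations to their one-step versions and to exploit the fact that in a double digraph the orientation of an edge carries no information. The one feature to watch is that the two theories use different interval objects: a graph homotopy is parametrized by a line graph $\mathbf{J}_n$, whose image $J_n=\mathcal{O}(\mathbf{J}_n)$ is a \emph{double} line digraph (each consecutive pair joined by a pair of opposite arrows), whereas a digraph homotopy is parametrized by a singly oriented line digraph $I_n$. Recall also that, since $G=\mathcal{O}(\mathbf{G})$ and $H=\mathcal{O}(\mathbf{H})$ are double digraphs, for any vertices $u,w$ of $H$ we have $u\to w$ iff $w\to u$ iff $u\sim w$; hence $u\,\overrightarrow{=}\,w$ and $w\,\overrightarrow{=}\,u$ are equivalent and both amount to $u=w$ or $u\sim w$.

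For the direction ``$\mathbf{f}\simeq\mathbf{g}\Rightarrow f\simeq g$'' I would argue by functoriality. A graph homotopy is a graph map $\mathbf{F}\colon\mathbf{G}\boxdot\mathbf{J}_n\to\mathbf{H}$ restricting to $\mathbf{f}$ and $\mathbf{g}$ at the two ends. Applying $\mathcal{O}$ and using Lemma \ref{l5.3} turns $\mathbf{F}$ into a digraph map $\mathcal{O}(\mathbf{F})\colon G\boxdot J_n\to H$ whose end restrictions are $f$ and $g$. Fix any line digraph $I_n$ and let $\iota\colon I_n\to J_n$ be the identity on vertices; since every edge $i\to j$ of $I_n$ satisfies $i\sim j$ in $\mathbf{J}_n$ and hence $i\to j$ in $J_n$, the map $\iota$ is a digraph map. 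Then $F:=\mathcal{O}(\mathbf{F})\circ(\func{id}_G\boxdot\iota)\colon G\boxdot I_n\to H$ is a digraph map with $F|_{G\boxdot\{0\}}=f$ and $F|_{G\boxdot\{n\}}=g$, that is, a digraph homotopy, so $f\simeq g$. (The degenerate case $n=0$ gives $\mathbf{f}=\mathbf{g}$, hence $f=g$, and one uses reflexivity of $\simeq$.)

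For the converse ``$f\simeq g\Rightarrow\mathbf{f}\simeq\mathbf{g}$'' I would decompose a digraph homotopy $F\colon G\boxdot I_n\to H$ into its levels $h_k:=F|_{G\boxdot\{k\}}\colon G\to H$, so that $h_0=f$, $h_n=g$, and consecutive $h_k,h_{k+1}$ are one-step homotopic, that is, satisfy (\ref{f=g}). Because $G,H$ are double digraphs and $\mathcal{O}$ is full, each $h_k$ equals $\mathcal{O}(\mathbf{h}_k)$ for a unique graph map $\mathbf{h}_k\colon\mathbf{G}\to\mathbf{H}$, with $\mathbf{h}_0=\mathbf{f}$ and $\mathbf{h}_n=\mathbf{g}$. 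By the remark in the first paragraph, condition (\ref{f=g}) for $h_k,h_{k+1}$ is equivalent to the symmetric statement that for every $x$ either $\mathbf{h}_k(x)=\mathbf{h}_{k+1}(x)$ or $\mathbf{h}_k(x)\sim\mathbf{h}_{k+1}(x)$. I would then define $\mathbf{F}\colon\mathbf{G}\boxdot\mathbf{J}_n\to\mathbf{H}$ by $\mathbf{F}(x,k)=\mathbf{h}_k(x)$ and check it is a graph map: on the horizontal edges this holds because each $\mathbf{h}_k$ is a graph map, and on the vertical edges $(x,k)\sim(x,k+1)$ it holds precisely by the one-step condition just displayed. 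Since $\mathbf{F}$ restricts to $\mathbf{f}$ and $\mathbf{g}$ at the ends, we get $\mathbf{f}\simeq\mathbf{g}$.

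The only real obstacle is the mismatch between the interval objects $\mathbf{J}_n$ (equivalently the double line digraph $J_n$) and the oriented line digraph $I_n$; this is exactly what forces the passage through one-step homotopies, where the double-digraph symmetry $u\,\overrightarrow{=}\,w\Leftrightarrow w\,\overrightarrow{=}\,u$ makes the choice of orientation irrelevant. Fullness of $\mathcal{O}$ is the other essential ingredient, since it guarantees that the intermediate digraph maps $h_k$ appearing in a digraph homotopy descend to genuine graph maps $\mathbf{h}_k$.
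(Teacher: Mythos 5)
Your proof is correct and follows essentially the same route as the paper's: the forward direction is identical (apply $\mathcal{O}$ and Lemma \ref{l5.3}, then compose with the inclusion $G\boxdot I_n\rightarrow G\boxdot J_n$), and your converse, though phrased level-wise via one-step homotopies and fullness of $\mathcal{O}$, constructs exactly the map the paper gets by setting $F'(x,i)=F(x,i)$ on $G\boxdot J_n$ and invoking the double-digraph symmetry before applying $\mathcal{O}^{-1}$. Your write-up merely makes explicit two details the paper leaves implicit, namely the degenerate case $n=0$ and the edge-by-edge verification that $\mathbf{F}$ is a graph map.
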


\begin{proof}
Let $\mathbf{F}\colon \mathbf{G}\boxdot \mathbf{J}_{n}\rightarrow \mathbf{H}$
be a homotopy between $\mathbf{f}$ and $\mathbf{g}$ as in Definition \ref%
{d5.2}. The natural digraph inclusion $I_{n}\rightarrow J_{n}\ $(where $%
I_{n}\in \mathcal{I}$ is arbitrary) induces the digraph inclusion $\Theta
\colon G\boxdot I_{n}\rightarrow G\boxdot J_{n}$. Applying functor $\mathcal{%
O}$ and Lemma \ref{l5.3} we obtain a digraph map $F\colon =G\boxdot
J_{n}\rightarrow H$ such that the composition $F\circ \Theta \colon G\boxdot
I_{n}\rightarrow H$ provides a digraph homotopy. Now let $F\colon G\boxdot
I_{n}\rightarrow H$ be a digraph homotopy as in Definition \ref{d3.1}
between two double digraphs. Define a digraph map $F^{\prime }\colon
G\boxdot J_{n}\rightarrow H$ on the set of vertices by $F^{\prime
}(x,i)=F(x,i)$. Since $H$ is a double digraph, this definition is correct.
Applying functor $\mathcal{O}^{-1}$ and Lemma \ref{l5.3} we obtain a graph
homotopy $\mathbf{F}^{\prime }\colon \mathbf{G}\boxdot \mathbf{J}%
_{n}\rightarrow \mathbf{H}$.
\end{proof}

Denote by $\mathcal{D}^{\prime }$ the homotopy category of digraphs. The
objects of this category are digraphs, and the maps are classes of homotopic
digraphs maps. Similarly, denote by $\mathcal{G}^{\prime }$ the homotopy
category of graphs and by $\mathcal{O}(\mathcal{G}^{\prime })$ the homotopy
category of double digraphs.

Proposition \ref{t5.4} implies the following.

\begin{corollary}
\label{c5.5} The functors $\mathcal{O}$ and $\mathcal{O}^{-1}$ induce an
equivalence between homotopy category of graphs and homotopy category of
double digraphs
\end{corollary}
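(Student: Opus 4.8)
The statement to prove is Corollary \ref{c5.5}: that the functors $\mathcal{O}$ and $\mathcal{O}^{-1}$ induce an equivalence between the homotopy category of graphs $\mathcal{G}^{\prime}$ and the homotopy category of double digraphs $\mathcal{O}(\mathcal{G}^{\prime})$.

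The plan is to show that $\mathcal{O}$ and $\mathcal{O}^{-1}$ descend to well-defined functors at the level of homotopy categories, and that the descended functors are mutually inverse. First I would recall that at the level of the ordinary (non-homotopy) categories, $\mathcal{O}\colon \mathcal{G}\to \mathcal{O}(\mathcal{G})$ and $\mathcal{O}^{-1}\colon \mathcal{O}(\mathcal{G})\to \mathcal{G}$ are already known to be mutually inverse isomorphisms of categories (as stated in the text immediately after the definition of double digraphs). So on objects there is nothing to prove: every double digraph is $\mathcal{O}(\mathbf{G})$ for a unique graph $\mathbf{G}$, and these assignments are inverse bijections on objects. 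The only real content is at the level of morphisms, where in the homotopy categories a morphism is a \emph{homotopy class} of maps rather than a single map.

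The key step is to verify that $\mathcal{O}$ and $\mathcal{O}^{-1}$ respect the homotopy relation, so that they are well-defined on homotopy classes of maps. This is exactly what Proposition \ref{t5.4} provides: two graph maps $\mathbf{f},\mathbf{g}\colon \mathbf{G}\to\mathbf{H}$ are homotopic if and only if the associated digraph maps $f=\mathcal{O}(\mathbf{f})$ and $g=\mathcal{O}(\mathbf{g})$ are homotopic. The forward direction of this equivalence shows that $\mathcal{O}$ sends homotopic graph maps to homotopic digraph maps, so $\mathcal{O}$ induces a well-defined functor $\mathcal{G}^{\prime}\to \mathcal{O}(\mathcal{G}^{\prime})$; the backward direction shows that $\mathcal{O}^{-1}$ sends homotopic double-digraph maps to homotopic graph maps, so $\mathcal{O}^{-1}$ induces a well-defined functor $\mathcal{O}(\mathcal{G}^{\prime})\to \mathcal{G}^{\prime}$. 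Functoriality (preservation of identities and composition) on homotopy classes follows immediately from functoriality on the underlying ordinary categories, since composition of homotopy classes is induced by composition of representatives.

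Finally I would check that the two induced functors are mutually inverse. Since $\mathcal{O}\circ\mathcal{O}^{-1}=\func{id}$ and $\mathcal{O}^{-1}\circ\mathcal{O}=\func{id}$ already hold on the nose in the ordinary categories $\mathcal{G}$ and $\mathcal{O}(\mathcal{G})$, these identities pass to the homotopy categories without change, giving $[\mathcal{O}]\circ[\mathcal{O}^{-1}]=\func{id}$ and $[\mathcal{O}^{-1}]\circ[\mathcal{O}]=\func{id}$ as functors between $\mathcal{G}^{\prime}$ and $\mathcal{O}(\mathcal{G}^{\prime})$. This exhibits an isomorphism (hence in particular an equivalence) of homotopy categories. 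I do not expect any serious obstacle here: essentially all the work has already been front-loaded into the isomorphism of ordinary categories and into Proposition \ref{t5.4}, and the corollary is the formal consequence of combining these two facts. The only point requiring mild care is the bookkeeping that a morphism in the homotopy category is an equivalence class, so one must invoke Proposition \ref{t5.4} to confirm that applying $\mathcal{O}$ or $\mathcal{O}^{-1}$ to a representative yields a well-defined class independent of the representative chosen.
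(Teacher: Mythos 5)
Your proof is correct and follows exactly the paper's route: the paper derives Corollary \ref{c5.5} directly from Proposition \ref{t5.4} (using both directions of the ``if and only if'' to see that $\mathcal{O}$ and $\mathcal{O}^{-1}$ descend to homotopy classes), together with the fact that $\mathcal{O}$ and $\mathcal{O}^{-1}$ are strictly mutually inverse on the underlying categories. Your write-up just makes this one-line deduction explicit, and it even correctly notes the resulting equivalence is in fact an isomorphism of homotopy categories.
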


\begin{definition}
\label{d5.6} \RM Let $\mathbb{K}$ be a commutative ring with unity. Define
\emph{homology groups} of a graph $\mathbf{G}$ with coefficients in $\mathbb{%
K}$ as follows: $H_{n}(\mathbf{G},\mathbb{K})\colon =H_{n}(G,\mathbb{K})$
where $G=\mathcal{O}\left( \mathbf{G}\right) .$
\end{definition}

The following statement follows from Theorem \ref{t3.4} and Proposition \ref%
{t5.4}.

\begin{proposition}
\label{c5.7} The homology groups of a graph $\mathbf{G}$ with coefficients $%
\mathbb{K}$ are homotopy invariant.
\end{proposition}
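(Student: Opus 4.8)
The plan is to transport the homotopy equivalence of graphs through the functor $\mathcal{O}$ to a homotopy equivalence of the associated digraphs, and then invoke the already-established homotopy invariance of digraph homology from Theorem \ref{t3.4}. First I would unwind the definitions: to say that the homology of graphs is homotopy invariant means that whenever $\mathbf{G}\simeq \mathbf{H}$, the homology groups $H_{n}(\mathbf{G},\mathbb{K})$ and $H_{n}(\mathbf{H},\mathbb{K})$ are isomorphic. By Definition \ref{d5.6} these groups are by definition $H_{n}(G,\mathbb{K})$ and $H_{n}(H,\mathbb{K})$ where $G=\mathcal{O}(\mathbf{G})$ and $H=\mathcal{O}(\mathbf{H})$, so it suffices to establish that $G\simeq H$ as digraphs.

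So assume $\mathbf{G}\simeq \mathbf{H}$, witnessed by graph maps $\mathbf{f}\colon \mathbf{G}\rightarrow \mathbf{H}$ and $\mathbf{g}\colon \mathbf{H}\rightarrow \mathbf{G}$ with $\mathbf{f}\circ \mathbf{g}\simeq \func{id}_{\mathbf{H}}$ and $\mathbf{g}\circ \mathbf{f}\simeq \func{id}_{\mathbf{G}}$ as in Definition \ref{d5.2}. Set $f=\mathcal{O}(\mathbf{f})$ and $g=\mathcal{O}(\mathbf{g})$. Since $\mathcal{O}$ is a functor, it respects composition and identities, so $f\circ g=\mathcal{O}(\mathbf{f}\circ \mathbf{g})$ with $\mathcal{O}(\func{id}_{\mathbf{H}})=\func{id}_{H}$, and symmetrically $g\circ f=\mathcal{O}(\mathbf{g}\circ \mathbf{f})$ with $\mathcal{O}(\func{id}_{\mathbf{G}})=\func{id}_{G}$.

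The key step is then Proposition \ref{t5.4}, which asserts that two graph maps are homotopic exactly when their images under $\mathcal{O}$ are homotopic digraph maps. Applying it to $\mathbf{f}\circ \mathbf{g}\simeq \func{id}_{\mathbf{H}}$ yields $f\circ g\simeq \func{id}_{H}$, and applying it to $\mathbf{g}\circ \mathbf{f}\simeq \func{id}_{\mathbf{G}}$ yields $g\circ f\simeq \func{id}_{G}$. By Definition \ref{t5.3} these two relations are precisely what it means for $f$ and $g$ to be homotopy inverses, hence $G\simeq H$ as digraphs. Finally Theorem \ref{t3.4}$\,(ii)$ produces an isomorphism $H_{n}(G,\mathbb{K})\cong H_{n}(H,\mathbb{K})$ (indeed with $f_{\ast }$ and $g_{\ast }$ as mutually inverse isomorphisms), which by Definition \ref{d5.6} is exactly the desired isomorphism $H_{n}(\mathbf{G},\mathbb{K})\cong H_{n}(\mathbf{H},\mathbb{K})$.

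There is no serious obstacle here: the entire content is the translation of graph-level homotopy data into digraph-level homotopy data via $\mathcal{O}$, after which the digraph theory does all the work. The only point requiring a moment's care is the functoriality bookkeeping, namely that $\mathcal{O}$ carries compositions to compositions and identities to identities, so that Proposition \ref{t5.4} is applied to the two composite homotopies $\mathbf{f}\circ \mathbf{g}\simeq \func{id}_{\mathbf{H}}$ and $\mathbf{g}\circ \mathbf{f}\simeq \func{id}_{\mathbf{G}}$ rather than to $\mathbf{f}$ and $\mathbf{g}$ in isolation.
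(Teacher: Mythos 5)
Your proposal is correct and is exactly the argument the paper intends: the paper derives Proposition \ref{c5.7} from Theorem \ref{t3.4} and Proposition \ref{t5.4}, and your write-up simply makes explicit the functoriality bookkeeping (applying Proposition \ref{t5.4} to the composites $\mathbf{f}\circ\mathbf{g}$ and $\mathbf{g}\circ\mathbf{f}$) that the paper leaves to the reader. No gaps and no deviation from the paper's route.
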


A subgraph $\mathbf{H}$ of a graph $\mathbf{G}$ is a graph whose set of
vertices is a subset of that of $\mathbf{G}$ and the edges of $\mathbf{H}$
are all those edges of $\mathbf{G}$ whose adjacent vertices belong to $%
\mathbf{H}$.

\begin{definition}
\label{d5.8} \RM Let $\mathbf{G}$ be a graph and $\mathbf{H}$ be its
subgraph.

$\left( i\right) $ A \emph{retraction} of $\mathbf{G}$ onto $\mathbf{H}$ is
a graph map $\mathbf{r}\colon \mathbf{G}\rightarrow \mathbf{H}$ such that $%
\mathbf{r}|_{\mathbf{H}}=\func{id}_{\mathbf{H}}.$

$\left( ii\right) $ A retraction $\mathbf{r}\colon \mathbf{G}\rightarrow
\mathbf{H}$ is called a \emph{deformation retraction} if $\mathbf{i}\circ
\mathbf{r}\simeq \func{id}_{\mathbf{G}},$ where $\mathbf{i}\colon \mathbf{H}%
\rightarrow \mathbf{G}$ is the natural inclusion map.
\end{definition}

Note that the condition $\mathbf{i}\circ \mathbf{r}\simeq \func{id}_{\mathbf{%
G}}$ is equivalent to the existence of a graph morphism $\mathbf{F}:\mathbf{G%
}\boxdot \mathbf{J}_{n}\rightarrow \mathbf{G}$ such that
\begin{equation}
\mathbf{F}|_{\mathbf{G}\boxdot \{0\}}=\func{id}_{\mathbf{G}},\ \ \mathbf{F}%
|_{\mathbf{G}\boxdot \{n\}}=\mathbf{i}\circ \mathbf{r}.  \label{5.32}
\end{equation}%
Similarly Proposition \ref{p3.6}, a deformation retraction provides homotopy
equivalence $\mathbf{G}\simeq \mathbf{H}$ with homotopy inverse maps $%
\mathbf{i},\mathbf{r}$ (compare with \cite[p.119]{Barcelo}).

\begin{example}
\label{e5.9} \RM$\left( i\right) $ Let us define a\emph{\ cycle} graph $%
\mathbf{S}_{n}$ $(n\geq 3)$ as the graph that is obtained from $\mathbf{J}%
_{n}$ by identifying of the vertices $n$ and $0$. Then
\begin{equation*}
H_{p}(\mathbf{S}_{n},\mathbb{K})=%
\begin{cases}
\mathbb{K}, & \forall n\ \text{and}\ p=0, \\
\mathbb{K}, & n\geq 5\ \text{and}\ p=n, \\
0, & \text{in other cases}.%
\end{cases}%
\end{equation*}

$\left( ii\right) $ Let $\mathbf{G}$ be \emph{a star-like graph}, that there
is a vertex $a\in \mathbf{V}_{\mathbf{G}}$ such that $a\sim v$ for any $v\in
\mathbf{V}_{\mathbf{G}}$. Then the map $\mathbf{r}:\mathbf{G}\rightarrow
\left\{ a\right\} $ is a deformation retraction which implies $\mathbf{G}%
\simeq \left\{ a\right\} $ (cf. Example \ref{e3.11}). Consequently, $H_{0}(%
\mathbf{G},\mathbb{K})=\mathbb{K}$ and $H_{p}(\mathbf{G},\mathbb{K})=0$ for
all $p>0$.

$\left( iii\right) $ If a graph $\mathbf{G}$ is a tree, then $\mathbf{G}$ is
contractible (cf. Example \ref{e8.4}). In particular, $H_{0}(\mathbf{G},%
\mathbb{K})=\mathbb{K}$ and $H_{p}(\mathbf{G},\mathbb{K})=0$ for all $p>0$.
\end{example}

\begin{definition}
\label{d5.10} \RM Let $\mathbf{f}\colon \mathbf{G}\rightarrow \mathbf{H}$ be
a graph map. The \emph{cylinder} $\func{C}_{\mathbf{f}}$ of $\mathbf{f}$ is
a graph with the set of vertices $\mathbf{V}_{\func{C}_{\mathbf{f}}}=\mathbf{%
V}_{\mathbf{G}}\sqcup \mathbf{V}_{\mathbf{H}}$ and with the set of edges $%
\mathbf{E}_{\func{C}_{\mathbf{f}}}$ that consists of all the edges from $%
\mathbf{E}_{\mathbf{G}}$ and $\mathbf{E}_{\mathbf{H}}$ as well as of the
edges of the form $x\sim f\left( x\right) $ for all $x\in \mathbf{V}_{%
\mathbf{G}}$.
\end{definition}

Analogously to Proposition \ref{p3.15}, we obtain the following.

\begin{proposition}
\label{t5.11} We have a homotopy equivalence $\func{C}_{\mathbf{f}}\simeq
\mathbf{H}$.
\end{proposition}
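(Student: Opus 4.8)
The plan is to mimic the proof of Proposition \ref{p3.15}, exhibiting the natural projection of $\func{C}_{\mathbf{f}}$ onto $\mathbf{H}$ as a one-step deformation retraction in the sense of Definition \ref{d5.8}. Define a map $\mathbf{p}\colon \func{C}_{\mathbf{f}}\rightarrow \mathbf{H}$ on vertices by
\begin{equation*}
\mathbf{p}\left( x\right) =\left\{
\begin{array}{ll}
x, & x\in \mathbf{V}_{\mathbf{H}}, \\
\mathbf{f}\left( x\right) , & x\in \mathbf{V}_{\mathbf{G}}.
\end{array}
\right.
\end{equation*}
First I would verify that $\mathbf{p}$ is a graph map restricting to the identity on $\mathbf{H}$, hence a retraction. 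This is a routine case analysis over the three kinds of edges of $\func{C}_{\mathbf{f}}$ listed in Definition \ref{d5.10}: edges lying in $\mathbf{H}$ are fixed; edges lying in $\mathbf{G}$ are carried into $\mathbf{H}$ because $\mathbf{f}$ is a graph map; and a connecting edge $x\sim \mathbf{f}\left( x\right) $ collapses, since $\mathbf{p}\left( x\right) =\mathbf{f}\left( x\right) =\mathbf{p}\left( \mathbf{f}\left( x\right) \right) $.

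Next I would show that $\mathbf{p}$ is a deformation retraction, that is $\mathbf{i}\circ \mathbf{p}\simeq \func{id}_{\func{C}_{\mathbf{f}}}$, by producing a one-step homotopy $\mathbf{F}\colon \func{C}_{\mathbf{f}}\boxdot \mathbf{J}_{1}\rightarrow \func{C}_{\mathbf{f}}$ with $\mathbf{F}|_{\func{C}_{\mathbf{f}}\boxdot \{0\}}=\func{id}$ and $\mathbf{F}|_{\func{C}_{\mathbf{f}}\boxdot \{1\}}=\mathbf{i}\circ \mathbf{p}$, exactly as in (\ref{5.32}). The only point to check is that $\mathbf{F}$ is a graph map, and by the structure of the product $\func{C}_{\mathbf{f}}\boxdot \mathbf{J}_{1}$ (Definition \ref{d2.3g}) the horizontal edges are handled automatically: at level $0$ the map is $\func{id}$ and at level $1$ it is the graph map $\mathbf{p}$. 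Thus the whole content reduces to the vertical edges $(u,0)\sim (u,1)$, for which one needs $u=\mathbf{p}\left( u\right) $ or $u\sim \mathbf{p}\left( u\right) $ in $\func{C}_{\mathbf{f}}$. For $u\in \mathbf{V}_{\mathbf{H}}$ this is an equality, and for $u\in \mathbf{V}_{\mathbf{G}}$ it is precisely the connecting edge $u\sim \mathbf{f}\left( u\right) $ built into $\func{C}_{\mathbf{f}}$.

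This vertical-edge verification is the heart of the argument, and it is exactly where the definition of the cylinder pays off: the added edges $x\sim \mathbf{f}\left( x\right) $ are designed so that the identity and the projection are joined by a single graph-homotopy step. Having established that $\mathbf{p}$ is a one-step deformation retraction, I would conclude $\func{C}_{\mathbf{f}}\simeq \mathbf{H}$ by the graph analog of Proposition \ref{p3.6} recorded in the remark after Definition \ref{d5.8}. Since every step is a direct verification, I do not expect a genuine obstacle; the only mild subtlety is making sure the one-step homotopy $\mathbf{F}$ is checked on all three edge types of the product graph, and not merely on the connecting edges.
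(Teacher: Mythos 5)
Your proposal is correct and is precisely the argument the paper intends: Proposition \ref{t5.11} is stated with only the remark that it follows ``analogously to Proposition \ref{p3.15}'', and your projection $\mathbf{p}$, the one-step homotopy on $\func{C}_{\mathbf{f}}\boxdot \mathbf{J}_{1}$ using the built-in connecting edges $x\sim \mathbf{f}(x)$, and the appeal to the graph analog of Proposition \ref{p3.6} recorded after Definition \ref{d5.8} are exactly that analogy spelled out. Your explicit verification of all three edge types of the product graph is a faithful (and slightly more detailed) rendering of the omitted proof.
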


Below we consider based graphs $\mathbf{G}^{\ast }$, where $\ast $ is a
based vertex of $\mathbf{G.}$ The based vertex of $\mathbf{J}_{n}$ will be
usually $0$.\label{here}

\begin{definition}
\label{d5.12} \RM Let $\mathbf{G}$ be a graph. A\emph{\ path-map }in a graph
$\mathbf{G}$ is any digraph map $\Phi :\mathbf{J}_{n}\rightarrow \mathbf{G}.$
A\emph{\ based path }on based graph $\mathbf{G}^{\ast }$ is a based map $%
\Phi :\mathbf{J_{n}^{\ast }}\rightarrow \mathbf{G}^{\ast }$. A loop in $%
\mathbf{G}$ is a based path-map $\Phi :\mathbf{J_{n}^{\ast }}\rightarrow
\mathbf{G}^{\ast }$ such that $\Phi (n)=\ast $.
\end{definition}

The \emph{\ inverse} path-map\emph{\ } and the \emph{concatenation }of
path-maps are defined similarly to Definition \ref{d4.1}.

\begin{definition}
\label{d5.13} \RM$\left( i\right) $ A graph map $\mathbf{h}\colon \mathbf{J}%
_{n}\rightarrow \mathbf{J}_{m}$ is called shrinking if $\mathbf{h}\left(
0\right) =0,$ $\mathbf{h}(n)=m$, and $\mathbf{h}\left( i\right) \leq \mathbf{%
h}\left( j\right) $ whenever $i\leq j$.

\emph{An extension }of a based path-map $\Phi :\mathbf{J}_{m}^{\ast
}\rightarrow \mathbf{G}^{\ast }$ is any path-map $\Phi ^{E}=\Phi \circ
\mathbf{h}\ \ $where $\mathbf{h}\colon \mathbf{J}_{n}^{\ast }\rightarrow
\mathbf{J}_{m}^{\ast }$ is shrinking. An extension $\Phi ^{E}$ is called
\emph{a stabilization} of $\Phi $ if the shrinking map $\mathbf{h}$\textbf{\
}satisfies the condition $\mathbf{h}|_{\mathbf{J}_{m}}=\func{id}$. A
stabilization of $\Phi $ will be denoted by $\Phi ^{S}$.

$\left( ii\right) $ Two loops $\Phi ,\Psi $ in a based graph $\mathbf{G}%
^{\ast }$ are called $S$-\emph{homotopic} if there exist stabilizations $%
\Phi ^{S},\Psi ^{S}$ which are homotopic. In this case we shall write $\Phi
\overset{S}{\simeq }\Psi $. This is an equivalence relation and equivalence
class of a loop $\Phi $ will be denoted by $[\Phi ]$ (cf. \cite{Babson} and
\cite{Barcelo}).
\end{definition}

Define a set $\pi _{1}(\mathbf{G}^{\ast })$ as the set of $S$-equivalence
classes of loops in $\mathbf{G}^{\ast }$, and the product in $\pi _{1}(%
\mathbf{G}^{\ast })$ by $[\Phi ]\cdot \lbrack \Psi ]\colon =[\Phi \vee \Psi
] $. Let $\mathbf{e}\colon \mathbf{J}_{0}^{\ast }\rightarrow \mathbf{G}%
^{\ast } $ be the trivial loop.

\begin{proposition}
\label{p5.14} {\RM\cite{Babson}, \cite[Proposition 5.6]{Barcelo} } The set $%
\pi _{1}(\mathbf{G}^{\ast })$ with the product defined above and with the
neutral element $[\mathbf{e}]$ is a group, that will be referred to as a
fundamental group of the graph $\mathbf{G}^{\ast }$ and denoted by $\pi
_{1}\left( \mathbf{G}^{\ast }\right) $.
\end{proposition}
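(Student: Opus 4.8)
The plan is to reduce the statement to the already-established group structure on the digraph fundamental group. Let $G^{\ast}=\mathcal{O}(\mathbf{G}^{\ast})$ be the double digraph associated with $\mathbf{G}^{\ast}$. By Theorem \ref{t4.11} the set $\pi_{1}(G^{\ast})$ is a group under concatenation, so it suffices to exhibit a bijection $\pi_{1}(\mathbf{G}^{\ast})\to\pi_{1}(G^{\ast})$ carrying the concatenation product and the class of the trivial loop to those of $\pi_{1}(G^{\ast})$; the group axioms for $\pi_{1}(\mathbf{G}^{\ast})$ then follow by transport of structure.

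First I would set up this map on representatives. A loop $\Phi\colon\mathbf{J}_{n}^{\ast}\to\mathbf{G}^{\ast}$ determines the word $\theta_{\Phi}=v_{0}\dots v_{n}$ with $v_{i}=\Phi(i)$, where consecutive letters are either equal or joined by an edge of $\mathbf{G}$ and $v_{0}=v_{n}=\ast$. Since $G$ is a double digraph, the same word defines a digraph loop $\phi\colon I_{n}^{\ast}\to G^{\ast}$ for the standard orientation of the line digraph $I_{n}$; any other orientation changes single edges between equal or double-adjacent vertices and hence gives a $C$-homotopic loop by Lemma \ref{l5.17}, so the $C$-homotopy class $[\phi]$ is unambiguous. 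Because $\theta_{\Phi\vee\Psi}$ is the concatenation of $\theta_{\Phi}$ and $\theta_{\Psi}$ and the trivial graph loop yields the trivial digraph loop, the assignment $[\Phi]\mapsto[\phi]$ is compatible with products and units as soon as it is well defined and bijective; it is surjective because the word of any digraph loop in the double digraph $G$ is the word of a graph loop.

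The heart of the argument is the equivalence
\[
\Phi\overset{S}{\simeq}\Psi\quad\Longleftrightarrow\quad\phi\overset{C}{\simeq}\psi ,
\]
which supplies both well-definedness and injectivity. For the forward implication I would observe that a stabilization merely appends a constant tail at $\ast$, which collapses by transformation $(v)$ of Theorem \ref{p4.12}, so the associated digraph loops satisfy $\phi^{S}\overset{C}{\simeq}\phi$; moreover a graph homotopy between the stabilized loops becomes, through Proposition \ref{t5.4}, a digraph homotopy of the associated maps, and this is the special case $h=\func{id}$ of a $C$-homotopy by Remark \ref{Remfipsi}. Chaining these relations yields $\phi\overset{C}{\simeq}\psi$. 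The harder, and main, obstacle is the converse: here I would invoke Theorem \ref{p4.12} to write a $C$-homotopy as a finite sequence of the word moves $(i)$--$(v)$, each governed by a triangle or a square of $G$, that is, by a $3$- or $4$-cycle (possibly degenerate) of the graph $\mathbf{G}$. For each such move I would construct an explicit graph homotopy of appropriately stabilized loops realizing it, the square and triangle contractions being the delicate cases; this is the graph-theoretic analogue of the cylinder diagrams used in the proof of Theorem \ref{p4.12}, now read inside $\mathbf{G}\boxdot\mathbf{J}_{k}$. Once every generating move is matched, $C$-homotopic words come from $S$-homotopic loops, the equivalence holds, and the bijection above is a monoid isomorphism onto the group $\pi_{1}(G^{\ast})$, whence $\pi_{1}(\mathbf{G}^{\ast})$ is a group.
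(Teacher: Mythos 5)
The paper offers no internal proof of this proposition: it is imported from \cite{Babson} and \cite[Proposition 5.6]{Barcelo}, so your transport-of-structure derivation from the digraph side is a genuinely different, self-contained route. In effect you are proving the comparison isomorphism that the paper only establishes later as Theorem \ref{t5.18} (whose statement presupposes the present proposition), so your argument, if completed, would make the graph-side theory independent of the cited A-theory results; and there is no circularity, since everything you invoke --- Theorem \ref{t4.11}, Theorem \ref{p4.12}, Lemma \ref{l5.17}, Proposition \ref{t5.4}, Remark \ref{Remfipsi} --- lives on the digraph side or is proved independently of Proposition \ref{p5.14}. Your forward implication and surjectivity are sound: in a double digraph every edge is a double edge, so Lemma \ref{l5.17} reorients line-digraph edges freely and $[\phi]$ depends only on the word $\theta_{\Phi}$; stabilizations collapse by move $(v)$; and a (based, endpoint-fixing) graph homotopy of stabilized loops passes through $\mathcal{O}$ and restriction along $I_{n}^{s}\rightarrow J_{n}$ to a digraph homotopy, which is a $C$-homotopy with $h=\func{id}$ by Remark \ref{Remfipsi}. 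The transport argument also correctly delivers well-definedness of the product on $S$-classes, which is implicitly part of the claim.

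The one place where your sketch conceals real work is the converse, and the difficulty is not the triangle/square geometry but positional bookkeeping. The moves of Theorem \ref{p4.12} alter the word in the middle, whereas a stabilization (Definition \ref{d5.13}) only appends copies of $\ast$ at the terminal end; one-step graph homotopies compare maps of the \emph{same} length position-by-position, so after a move such as $uabcv\mapsto uacv$ the whole tail $v$ is shifted and no choice of stabilizations aligns the two words directly. What your phrase \textquotedblleft appropriately stabilized loops\textquotedblright\ needs is the auxiliary lemma that an arbitrary extension $\Phi\circ\mathbf{h}$ (repetition of a letter at \emph{any} position) is $S$-homotopic to $\Phi$. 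This is true and elementary --- shuffle the duplicate down the tail by one-step homotopies, $\dots v_{i}v_{i}v_{i+1}\dots\ \mapsto\ \dots v_{i}v_{i+1}v_{i+1}\dots$, each intermediate word remaining a valid walk, until the duplicate reaches the end and becomes a stabilization --- but it must be stated and proved; notably it is exactly the point the paper itself outsources in the proof of Theorem \ref{t5.18} (\textquotedblleft this follows directly from definition of fundamental group of graph\textquotedblright\ in \cite{Babson}, \cite{Barcelo}). With that lemma in hand, each move $(i)$--$(v)$ is realized by a single letter change (for instance $uabcv\simeq uaccv$ via $b\mapsto c$, legal because $b\sim c$ and $a\sim c$) followed by extension-invariance, your equivalence $\Phi\overset{S}{\simeq}\Psi\Leftrightarrow\phi\overset{C}{\simeq}\psi$ holds, and the group structure follows as you say.
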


\begin{definition}
\label{d5.15} \RM Consider two based path-maps
\begin{equation*}
\Phi \colon \mathbf{J}_{n}^{\ast }\rightarrow \mathbf{G}^{\ast }\text{ \ and
\ }\ \Psi \colon \mathbf{J}_{m}^{\ast }\rightarrow \mathbf{G}^{\ast }.
\end{equation*}
An\emph{\ one-step }$C$\emph{-homotopy} from $\Phi $ to $\Psi $ is given by
a shrinking map $\mathbf{h}:\mathbf{J}_{n}\rightarrow \mathbf{J}_{m}$ such
that the map $\mathbf{F}:\mathbf{V}_{\func{C}_{\mathbf{h}}}\rightarrow
\mathbf{V}_{\mathbf{G}}$ given by%
\begin{equation*}
\mathbf{F}|_{\mathbf{J}_{n}}=\Phi \ \ \ \text{and\ \ \ }\mathbf{F}|_{\mathbf{%
J}_{m}}=\Psi ,
\end{equation*}
is a graph map from $\func{C}_{\mathbf{h}}$ to $\mathbf{G}.$

The path-maps $\Phi $ and $\Psi $ are said to be $C$\emph{-homotopic} if
there exists a sequence of one-step $C$-homotopies that connect $\Phi $ and $%
\Psi $. We shall write in this case $\Phi \overset{C}{\simeq }\Psi $.
\end{definition}

The following statement follows immediately from definitions of the functor $%
\mathcal{O}$ and cylinder of the graph and digraph maps.

\begin{lemma}
\label{l5.16} Let $\mathbf{h}\colon \mathbf{G}\rightarrow \mathbf{H}$ be a
graph map. There exists a natural digraph inclusion $\func{C}_{h}\rightarrow
\mathcal{O}(\mathbf{C}_{\mathbf{h}}),$ where $\func{C}_{h}$ is a cylinder of
the digraph map $h\colon G\rightarrow H$.
\end{lemma}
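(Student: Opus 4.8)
The plan is to realize the asserted inclusion as the identity map on vertices and then to verify that it is a morphism of digraphs by comparing the two edge sets. First I would note that both digraphs share the \emph{same} vertex set: since $G=\mathcal{O}(\mathbf{G})$ and $H=\mathcal{O}(\mathbf{H})$ we have $V_G=\mathbf{V}_{\mathbf{G}}$ and $V_H=\mathbf{V}_{\mathbf{H}}$, so by Definition \ref{d3.14} the vertex set of $\func{C}_h$ is $V_G\sqcup V_H=\mathbf{V}_{\mathbf{G}}\sqcup\mathbf{V}_{\mathbf{H}}$, while by Definition \ref{d5.10} the graph $\mathbf{C}_{\mathbf{h}}$ has vertex set $\mathbf{V}_{\mathbf{G}}\sqcup\mathbf{V}_{\mathbf{H}}$, which the functor $\mathcal{O}$ leaves unchanged. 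Hence the identity on vertices is a natural candidate map $\iota\colon\func{C}_h\rightarrow\mathcal{O}(\mathbf{C}_{\mathbf{h}})$, depending on no choices.

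Next I would check that $\iota$ carries every edge of $\func{C}_h$ to an edge of $\mathcal{O}(\mathbf{C}_{\mathbf{h}})$; since $\iota$ is injective on vertices, this makes it an inclusion in exactly the sense of the inclusion $I_n\rightarrow J_n$ used in the proof of Proposition \ref{t5.4}. By Definition \ref{d3.14} the edges of $\func{C}_h$ are of three kinds: the edges of $G$, the edges of $H$, and the connecting edges $x\rightarrow h(x)$ for $x\in V_G$. For an edge $v\rightarrow w$ coming from $G=\mathcal{O}(\mathbf{G})$ we have $v\sim w$ in $\mathbf{G}$, hence $v\sim w$ in $\mathbf{C}_{\mathbf{h}}$ because the edges of $\mathbf{G}$ are edges of the graph cylinder, and therefore $v\rightarrow w$ in $\mathcal{O}(\mathbf{C}_{\mathbf{h}})$; the edges coming from $H$ are treated identically. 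For a connecting edge $x\rightarrow h(x)$, Definition \ref{d5.10} gives $x\sim h(x)$ in $\mathbf{C}_{\mathbf{h}}$, whence $x\rightarrow h(x)$ in $\mathcal{O}(\mathbf{C}_{\mathbf{h}})$. Thus every edge of $\func{C}_h$ is an edge of $\mathcal{O}(\mathbf{C}_{\mathbf{h}})$, and $\iota$ is the desired natural digraph inclusion.

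The one delicate point, and the only place where the two digraphs genuinely differ, concerns the connecting edges: in $\func{C}_h$ the edge $x\rightarrow h(x)$ is present in one direction only, whereas in $\mathcal{O}(\mathbf{C}_{\mathbf{h}})$ the undirected edge $x\sim h(x)$ yields both $x\rightarrow h(x)$ and $h(x)\rightarrow x$, since $\mathcal{O}$ produces a double digraph. Consequently $\func{C}_h$ is a spanning subdigraph of $\mathcal{O}(\mathbf{C}_{\mathbf{h}})$ rather than an induced one, so I would stress that ``inclusion'' here means an injective digraph morphism onto a subset of the edges, as for $I_n\hookrightarrow J_n$, and not the induced-subdigraph notion of Section \ref{SecRet}. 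Beyond this bookkeeping there is no real obstacle: once the three edge types are matched the verification is complete.
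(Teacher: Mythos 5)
Your proof is correct and matches the paper's intent exactly: the paper offers no written argument at all, stating only that the lemma ``follows immediately from definitions of the functor $\mathcal{O}$ and cylinder of the graph and digraph maps,'' and your identity-on-vertices map with the three-way edge check (edges of $G$, edges of $H$, connecting edges $x\rightarrow h\left( x\right) $) is precisely that immediate verification spelled out. Your closing remark is also well taken and accurate: since $\mathcal{O}(\mathbf{C}_{\mathbf{h}})$ doubles the connecting edges while $\func{C}_{h}$ keeps only $x\rightarrow h\left( x\right) $, the inclusion is onto a proper subset of edges, so ``inclusion'' here must indeed be read as an injective digraph morphism (as with $I_{n}\rightarrow J_{n}$ in the proof of Proposition \ref{t5.4}) and not as the induced-subdigraph notion of Section \ref{SecRet}.
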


\begin{theorem}
\label{t5.18} Let $G^{\ast }$ be a based double digraph. We have a natural
isomorphism of fundamental groups $\pi _{1}(\mathbf{G}^{\ast })\cong \pi
_{1}(G^{\ast })$ where $\mathbf{G}^{\ast }=\mathcal{O}^{-1}({G}^{\ast })$.
\end{theorem}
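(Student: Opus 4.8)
The plan is to identify both fundamental groups with the set of based loop\-/words over the common vertex set $V_G=\mathbf{V}_{\mathbf{G}}$ and then to match the two equivalence relations. A digraph loop $\phi\colon I_n^{\ast}\rightarrow G^{\ast}$ and a graph loop $\Phi\colon \mathbf{J}_n^{\ast}\rightarrow \mathbf{G}^{\ast}$ both amount to a word $v_0v_1\dots v_n$ with $v_0=v_n=\ast$ in which consecutive letters are either equal or joined by an edge of $\mathbf{G}$; indeed, since $G$ is a double digraph the condition $v_i\,\overrightarrow{=}\,v_{i+1}$ is symmetric and coincides with $v_i=v_{i+1}$ or $v_i\sim v_{i+1}$, so the orientation carried by the line digraph $I_n$ is immaterial. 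I would therefore define a map $\Xi$ sending a graph loop $\Phi$ to the digraph loop with the same word and any chosen orientation of $I_n$; by the first part of the proof of Theorem \ref{p4.12} (equivalently Lemma \ref{l5.17}) the resulting $C$-homotopy class is independent of this choice. Since concatenation and inversion of loops act on words the same way in both categories, once $\Xi$ is shown to descend to a bijection of classes it is automatically a group isomorphism, and its construction through $\mathcal{O}^{-1}$ makes it natural in $G^{\ast}$.

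The first main step is to show that one-step $C$-homotopies of loops coincide in the two settings. Given a shrinking map $h=\mathcal{O}(\mathbf{h})\colon I_n\rightarrow I_m$, Lemma \ref{l5.16} provides the inclusion $\func{C}_h\hookrightarrow \mathcal{O}(\func{C}_{\mathbf{h}})$, and because $G$ is double a map on vertices is a digraph map $\func{C}_h\rightarrow G$ exactly when it is a graph map $\func{C}_{\mathbf{h}}\rightarrow \mathbf{G}$: both conditions reduce, by Remark \ref{Remfipsi}, to $\phi(i)\,\overrightarrow{=}\,\psi(h(i))$, which for a double digraph is the symmetric relation $\phi(i)=\psi(h(i))$ or $\phi(i)\sim\psi(h(i))$. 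The same symmetry shows that the direct and inverse digraph $C$-homotopies impose identical conditions, so nothing is lost by passing to the undirected cylinder. Consequently the digraph $C$-homotopy behind $\pi_1(G^{\ast})$ (Definition \ref{d4.8}) and the graph $C$-homotopy of Definition \ref{d5.15} generate the same equivalence relation on loop-words.

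The remaining step, and the one I expect to be the main obstacle, is to reconcile the graph $C$-homotopy of Definition \ref{d5.15} with the $S$-homotopy of Definition \ref{d5.13} used to define $\pi_1(\mathbf{G}^{\ast})$. One inclusion is routine: a stabilization is a $C$-homotopy whose shrinking map fixes an initial segment and is constant afterwards, while a one-step graph homotopy between loops of equal length is a $C$-homotopy with $\mathbf{h}=\func{id}$; hence $\Phi\overset{S}{\simeq}\Psi$ implies $\Phi\overset{C}{\simeq}\Psi$. The converse requires realizing an \emph{arbitrary} one-step $C$-homotopy by stabilizations and homotopies. From the defining condition $\Phi(i)\,\overrightarrow{=}\,\Psi(\mathbf{h}(i))$ one reads off that $\Phi$ is one-step homotopic to the extension $\Psi\circ\mathbf{h}$; the difficulty is that $\Psi\circ\mathbf{h}$ is a general extension, not a stabilization, differing from $\Psi$ by the insertion of repeated consecutive vertices at arbitrary interior positions. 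The crux is therefore the lemma that any extension of a loop is $S$-homotopic to the loop itself, i.e. that deleting an interior repeated vertex $\dots aa\dots\mapsto\dots a\dots$ is an $S$-homotopy. I would prove this by stabilizing the two words to a common length and exhibiting the explicit intervening one-step homotopies, a verification that is implicit in the $A$-theory of \cite{Babson} and \cite{Barcelo}.

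Granting this, $\Xi$ induces a bijection between $S$-homotopy classes of graph loops and $C$-homotopy classes of digraph loops; combined with the fact that both products are concatenation of words and both inverses are word reversal, this yields the isomorphism $\pi_1(\mathbf{G}^{\ast})\cong\pi_1(G^{\ast})$, and naturality follows because $\Xi$ is built from the functor $\mathcal{O}^{-1}$ and the vertex-level identification $\mathbf{V}_{\mathbf{G}}=V_G$.
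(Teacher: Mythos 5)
Your proposal is correct and follows essentially the same route as the paper: you translate loops through the vertex-level identification (using Lemma \ref{l5.17} to make the orientation of the line digraph immaterial, which is exactly how the paper handles surjectivity via the special line digraph $I_n^s$), transfer one-step $C$-homotopies between the two settings via Lemma \ref{l5.16} and the double-digraph symmetry, and reduce the remaining point to reconciling graph $C$-homotopy with $S$-homotopy. Your identification of the crux --- that a general extension $\Psi\circ\mathbf{h}$, unlike a stabilization, inserts repetitions at interior positions and must be shown $S$-homotopic to $\Psi$ --- is precisely the step the paper also defers to the $A$-theory framework of \cite{Babson} and \cite{Barcelo}, so your argument matches the paper's in both structure and level of rigor.
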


\begin{proof}
Let $\Phi \colon \mathbf{J}_{n}^{\ast }\rightarrow \mathbf{G}^{\ast }$ be a
based loop. Denote by $I_{n}^{s}$ the special line digraph with the vertices
$0,1,...,n$ and edges $i\rightarrow i+1$ for all $i=0,...,n-1$. There is a
natural inclusion $\tau \colon I_{n}^{s}\rightarrow J_{n}$. The composition $%
\mathcal{O}(\Phi )\circ \tau \colon {I_{n}^{s}}^{\ast }\rightarrow G^{\ast }$
defines a based loop $\phi $ in $G^{\ast }$. At first we would like to
prove, that the correspondence $\Phi \longrightarrow \mathcal{O}(\Phi )\circ
\tau =\phi $ provides a well defined map of sets
\begin{equation*}
\mathcal{O}_{\ast }\colon \pi _{1}(\mathbf{G}^{\ast })\rightarrow \pi
_{1}(G^{\ast }),\ \ \mathcal{O}_{\ast }([\Phi ])\mapsto \lbrack \mathcal{O}%
(\Phi )\circ \tau ]=[\phi ].
\end{equation*}%
Let $\Phi \colon \mathbf{J}_{k}^{\ast }\rightarrow \mathbf{G}^{\ast },\Psi
\colon \mathbf{J}_{m}^{\ast }\rightarrow \mathbf{G}^{\ast }$ be loops and $%
\Phi \overset{S}{\simeq }\Psi $. The homotopic stabilizations $\Phi ^{S}$
and $\Psi ^{S}$ provide one-step $C$-homotopies $\Phi ^{S}\overset{C}{\simeq
}\Phi $, $\Psi ^{S}\overset{C}{\simeq }\Psi $. A homotopy between $\Phi ^{S}$
and $\Psi ^{S}$ provides $C$-homotopy $\Phi ^{S}\overset{C}{\simeq }\Psi
^{S} $. Since $C$-homotopy is an equivalence relation, we obtain $\Phi
\overset{C}{\simeq }\Psi $. Now by Lemma \ref{l5.16} we obtain that $\phi
\overset{C}{\simeq }\psi $. That is the map $\mathcal{O}_{\ast }$ is well
defined, and it is easy to see that this is a homomorphism of groups. This
is an epimorphism as follows from Proposition \ref{l5.17}.

Digraph maps
\begin{equation*}
\phi \colon {I_{n}^{s}}^{\ast }\rightarrow G^{\ast },\ \ \psi \colon {%
I_{m}^{s}}^{\ast }\rightarrow G^{\ast }
\end{equation*}%
define graphs maps
\begin{equation*}
\Phi \colon \mathbf{J}_{n}\rightarrow \mathbf{G}^{\ast },\ \ \Phi \colon
\mathbf{J}_{m}\rightarrow \mathbf{G}^{\ast },
\end{equation*}
such that $\mathcal{O}(\Phi )\circ \tau =\phi $ and $\mathcal{O}(\Psi )\circ
\tau =\psi $. A one-step $C$-homotopy $\phi \overset{C}{\simeq }\psi $
implies a one-step $C$-homotopy $\Phi \overset{C}{\simeq }\Psi $. That
implies that $\Phi ^{E}$ is homotopic to $\Psi $ or vice versa. To finish
the proof of the Theorem, it suffices to prove that $\Phi \overset{S}{\simeq
}\Psi $. But this follows directly from definition of fundamental group of
graph in \cite{Babson} and \cite{Barcelo}.
\end{proof}

\begin{theorem}
\label{t5.19} For any based graph $\mathbf{G}^*$ we have an isomorphism
\begin{equation*}
\pi_1(\mathbf{G}^*)/[\pi_1(\mathbf{G}^*), \pi_1(\mathbf{G}^*)]\cong H_1(%
\mathbf{G}, \mathbb{Z})
\end{equation*}
where $[\pi_1(\mathbf{G}^*), \pi_1(\mathbf{G}^*)]$ is a commutator subgroup.
\end{theorem}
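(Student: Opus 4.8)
The plan is to reduce the statement to its digraph analogue, Theorem \ref{t4.13}, through the functor $\mathcal{O}$ that identifies graphs with double digraphs. Every object appearing in the statement has, by the definitions of Section \ref{S5}, a literal digraph meaning, so the isomorphism should fall out by composing three previously established identifications.

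First I would pass to the associated based double digraph. Set $G^{\ast }=\mathcal{O}(\mathbf{G}^{\ast })$, so that $G$ is a double digraph and $\mathbf{G}^{\ast }=\mathcal{O}^{-1}(G^{\ast })$; moreover $G$ is connected precisely because $\mathbf{G}$ is. By Definition \ref{d5.6} the homology on the right-hand side is by definition the digraph homology,
\[
H_{1}(\mathbf{G},\mathbb{Z})=H_{1}(G,\mathbb{Z}),
\]
so it suffices to identify the abelianized fundamental group of $\mathbf{G}^{\ast}$ with $H_{1}(G,\mathbb{Z})$.

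Next I would invoke Theorem \ref{t5.18}, which gives a natural \emph{group} isomorphism $\pi_{1}(\mathbf{G}^{\ast})\cong\pi_{1}(G^{\ast})$. Any isomorphism of groups carries the commutator subgroup onto the commutator subgroup, hence induces an isomorphism of the quotients by those subgroups; thus
\[
\pi_{1}(\mathbf{G}^{\ast})/[\pi_{1}(\mathbf{G}^{\ast}),\pi_{1}(\mathbf{G}^{\ast})]\cong\pi_{1}(G^{\ast})/[\pi_{1}(G^{\ast}),\pi_{1}(G^{\ast})].
\]
Since $G^{\ast}$ is a based connected digraph, Theorem \ref{t4.13} applies and yields $\pi_{1}(G^{\ast})/[\pi_{1}(G^{\ast}),\pi_{1}(G^{\ast})]\cong H_{1}(G,\mathbb{Z})$. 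Chaining these isomorphisms together with the identification of homologies above proves the claim.

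This argument is entirely formal: the substance was already carried out in the proofs of Theorems \ref{t4.13} and \ref{t5.18}, and nothing needs to be recomputed. The only two points deserving a word are that the map of Theorem \ref{t5.18} really is a homomorphism of groups (so that it respects commutators), which is part of that theorem's assertion, and the tacit connectedness hypothesis inherited from Theorem \ref{t4.13}, under which Theorem \ref{t5.19} is to be read. I expect no genuine obstacle here, since the whole difficulty of relating $\pi_{1}$ and $H_{1}$ for graphs has been pushed into the digraph machinery.
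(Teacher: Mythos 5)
Your proposal is correct and takes essentially the same route as the paper, whose entire proof of Theorem \ref{t5.19} is the single line that it follows from Theorems \ref{t5.18} and \ref{t4.13}; you merely spell out the chain explicitly, including the definitional identification $H_{1}(\mathbf{G},\mathbb{Z})=H_{1}(G,\mathbb{Z})$ from Definition \ref{d5.6}. Your observation that a connectedness hypothesis is tacitly inherited from Theorem \ref{t4.13} is accurate and is a point the paper itself passes over silently.
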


\begin{proof}
Follows from Theorems \ref{t5.18} and \ref{t4.13}.
\end{proof}

\begin{definition}
\label{d5.20} \RM\cite[p.41]{Babson} Let $\mathbf{G}^{\ast }$ be a based
graph.

$\left( i\right) $ \emph{A based path graph} $\mathbf{PG^{\ast }}$ is a
graph with the set of vertices $\mathbf{V}_{\mathbf{PG^{\ast }}}=\{\Phi
\colon \mathbf{J}_{n}\rightarrow \mathbf{G}^{\ast }\}$, a base vertex $\ast
\colon \mathbf{J}_{0}\rightarrow \mathbf{G}^{\ast }$, and there is an edge $%
\Phi \sim \Psi $ if and only $\Phi ^{S}\simeq \Psi $ or $\Phi \simeq \Psi
^{S}$.

$\left( ii\right) $ \emph{A based loop graph} $\mathbf{LG^{\ast }}$ is a
based sub-graph of $\mathbf{PG^{\ast }}$ with the set of vertices $\mathbf{V}%
_{\mathbf{LG^{\ast }}}=\{\Phi \colon \mathbf{J}_{n}\rightarrow \mathbf{G}%
^{\ast }|\Phi (n)=\ast \}$ and with the restricted from $\mathbf{PG^{\ast }}$
set of vertices.

$\left( iii\right) $ Define \emph{\ higher homotopy groups} $\pi _{n}(%
\mathbf{G}^{\ast })\colon =\pi _{n-1}(\mathbf{LG}^{\ast })$ for $n\geq 2$.
\end{definition}

\begin{proposition}
\label{p5.21} Let $G^*=\mathcal{O}(\mathbf{G}^*)$ be a based double digraph.
Then $LG^*$ be a double digraph and we have a natural inclusion $\mathbf{l}%
\colon \mathbf{LG^*}\overset{\subset}{\to }\mathcal{O}^{-1} (LG^*) $ that is
an identity map on the set of vertices. This map induces a homomorphism of
homotopy groups $\pi_n(\mathbf{LG^*})\to \pi_n(LG^*) $ for $n\geq 1$ and an
isomorphism for $n=0$.
\end{proposition}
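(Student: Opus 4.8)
The plan is to prove the three assertions in turn: that $LG^{\ast }$ is a double digraph, that $\mathbf{l}$ is a well-defined based graph map (the identity on vertices in the sense of preserving the underlying vertex sequence), and that functoriality of $\pi _{n}$ together with Theorem \ref{t5.18} yields the stated homomorphisms and the $n=0$ isomorphism. I would begin with the double-digraph claim, i.e. that the edge relation of $LG^{\ast }$ is symmetric. Recall from Definition \ref{d4.4} that $\phi \rightarrow \psi$ in $LG^{\ast }$ iff $\phi \neq \psi $ and there is either a one-step direct $C$-homotopy from $\phi $ to $\psi $ or a one-step inverse $C$-homotopy from $\psi $ to $\phi $. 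By Remark \ref{Remfipsi}, a direct one-step $C$-homotopy along a shrinking $h$ amounts to $\phi (i)\,\overrightarrow{=}\,\psi (h(i))$ for all $i$, while the inverse version amounts to $\psi (h(i))\,\overrightarrow{=}\,\phi (i)$. Since $G=\mathcal{O}(\mathbf{G})$ is a double digraph, the relation $a\,\overrightarrow{=}\,b$ is symmetric in $a,b$, so these two conditions coincide; hence, along the same $h$, a direct one-step $C$-homotopy exists iff an inverse one exists. Substituting this equivalence into the definition of the edge relation, one finds that $\phi \rightarrow \psi $ and $\psi \rightarrow \phi $ both reduce to the same symmetric condition, so $\phi \rightarrow \psi \Leftrightarrow \psi \rightarrow \phi $. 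Thus $LG^{\ast }$ is a double digraph, and $\mathcal{O}^{-1}(LG^{\ast })$ is a well-defined based graph with the same vertex set as $LG^{\ast }$.

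Next I would define $\mathbf{l}$ on vertices exactly as in Theorem \ref{t5.18}: a loop $\Phi \colon \mathbf{J}_{n}^{\ast }\rightarrow \mathbf{G}^{\ast }$ is sent to the digraph loop $\mathcal{O}(\Phi )\circ \tau \colon {I_{n}^{s}}^{\ast }\rightarrow G^{\ast }$, which preserves the underlying vertex sequence and carries the trivial loop to the trivial loop, so base points are respected. To see that $\mathbf{l}$ is a graph map I must check that it preserves adjacency: an edge $\Phi \sim \Psi $ of $\mathbf{LG^{\ast }}$ (Definition \ref{d5.20}, namely $\Phi ^{S}\simeq \Psi $ or $\Phi \simeq \Psi ^{S}$) must be carried to an edge of $\mathcal{O}^{-1}(LG^{\ast })$, that is, to a pair of loops that are one-step $C$-homotopic in $G^{\ast }$. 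This is where Lemma \ref{l5.16} enters: a stabilization followed by a graph homotopy yields a graph map of the cylinder $\func{C}_{\mathbf{h}}$, and the natural digraph inclusion $\func{C}_{h}\rightarrow \mathcal{O}(\func{C}_{\mathbf{h}})$ converts this into a digraph map of $\func{C}_{h}$, i.e. a one-step $C$-homotopy between the images $\phi $ and $\psi $; since $LG^{\ast }$ is a double digraph, this is the required edge. This is precisely the passage already exploited in the proof of Theorem \ref{t5.18}.

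Finally, since $\mathbf{l}$ is a based graph map and $\pi _{n}$ is functorial on the homotopy category of based graphs (cf. Corollary \ref{c4.16} and its graph analogue obtained from Corollary \ref{c5.5}), $\mathbf{l}$ induces homomorphisms $\pi _{n}(\mathbf{l})\colon \pi _{n}(\mathbf{LG^{\ast }})\rightarrow \pi _{n}(\mathcal{O}^{-1}(LG^{\ast }))$ for all $n\geq 0$, which are the asserted homomorphisms for $n\geq 1$. For $n=0$ the map $\pi _{0}(\mathbf{l})$ is the induced map on connected components, sending the component of $\Phi $ to the component of $\mathcal{O}(\Phi )\circ \tau $. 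Because connected components depend only on the underlying undirected graph, $\pi _{0}(\mathcal{O}^{-1}(LG^{\ast }))=\pi _{0}(LG^{\ast })=\pi _{1}(G^{\ast })$, and under the identification $\pi _{0}(\mathbf{LG^{\ast }})=\pi _{1}(\mathbf{G}^{\ast })$ the map $\pi _{0}(\mathbf{l})$ is exactly the isomorphism $\mathcal{O}_{\ast }$ of Theorem \ref{t5.18}; this settles the $n=0$ case.

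I expect the main obstacle to be the well-definedness of $\mathbf{l}$ as a graph map, i.e. the adjacency-preservation step, because it requires reconciling two genuinely different bookkeeping devices: the graph-theoretic notion of adjacency in $\mathbf{LG^{\ast }}$ (stabilization plus homotopy, Definition \ref{d5.20}) and the digraph-theoretic one-step $C$-homotopy defining $LG^{\ast }$. The translation between them is not formal and must be routed through the cylinder comparison of Lemma \ref{l5.16}. By contrast, the double-digraph claim is a short symmetry argument, and once $\mathbf{l}$ is known to be a morphism the homotopy-group statements follow immediately from functoriality and Theorem \ref{t5.18}.
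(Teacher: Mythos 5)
Your proposal is correct and follows essentially the same route as the paper's (very terse) proof: the double-digraph property of $LG^{\ast}$ rests on the symmetry of $\overrightarrow{=}$ in a double digraph, which is the same observation the paper invokes by pointing to Proposition \ref{t5.4}; the verification that $\mathbf{l}$ is a graph map is routed through the cylinder comparison of Lemma \ref{l5.16} exactly as the paper indicates; and the homotopy-group claims follow from functoriality together with identifying $\pi_{0}(\mathbf{l})$ with the isomorphism $\mathcal{O}_{\ast}$ of Theorem \ref{t5.18}. In effect you have filled in the details that the paper compresses into ``Then the result follows,'' without departing from its argument.
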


\begin{proof}
The proof that $LG^{\ast }$ is a double digraph is similar to the proof of
Proposition \ref{t5.4} and the graph map $\mathbf{l}$ is well defined by
Lemma \ref{l5.16}. Then the result follows.
\end{proof}

\bibliographystyle{amsplain}
\bibliography{biblio}

\end{document}